\newtheorem{thm}{Theorem}[section]
\newtheorem{theorem}[thm]{Theorem}
\newtheorem{cor}[thm]{Corollary}
\newtheorem{prop}[thm]{Proposition}
\newtheorem{proposition}[thm]{Proposition}
\newtheorem{defn}[thm]{Definition}
\newtheorem{definition}[thm]{Definition}
\theoremstyle{remark}
\newtheorem{remark}[thm]{Remark}
\numberwithin{equation}{section}
\newtheorem{example}[thm]{Example}
\newcommand{\cA}{\mathcal A}
\newcommand{\cC}{\mathcal C}
\newcommand{\cD}{\mathcal D}
\newcommand{\cF}{\mathcal F}
\newcommand{\cG}{\mathcal G}
\newcommand{\cK}{\mathcal K}
\newcommand{\cL}{\mathcal L}
\newcommand{\cM}{\mathcal M}
\newcommand{\cN}{\mathcal N}
\newcommand{\cU}{\mathcal U}
\newcommand{\cV}{\mathcal V}
\newcommand{\cY}{\mathcal Y}
\newcommand{\bbR}{\mathbb R}
\newcommand{\bbT}{\mathbb T}
\newcommand{\bbZ}{\mathbb Z}
\newcommand{\rank}{{\rm rank\ }}
\begin{document}
\setcounter{tocdepth}{1}

\title{A conceptual approach to the problem of action-angle~variables}

\author{Nguyen Tien Zung}
\address{School of Mathematics, Shanghai Jiao Tong University (visiting professor), 800 Dongchuan Road, Minhang District, Shanghai, 200240 China}
\address{Institut de Math\'ematiques de Toulouse, UMR5219, 
Universit\'e Paul Sabatier, 118 route de Narbonne,
31062 Toulouse, France}
\email{tienzung@math.univ-toulouse.fr}

\begin{abstract}
In this paper we develop a general conceptual approach to the problem of 
existence of action-angle variables for dynamical systems, which establishes and
uses the \textit{fundamental conservation property of associated torus actions}:
anything which is preserved by the system is also preserved by the associated torus
actions. This approach allows us to obtain, among other things: a) the shortest 
and most easy to understand  conceptual proof of the classical 
Arnold--Liouville--Mineur theorem; b) basically all known results in the literature
about the existence of action-angle variables in various contexts can be recovered
in a unifying way, with simple proofs, using our approach; c) new results on 
action-angle variables in many different contexts, including systems on contact 
manifolds, systems on presymplectic and Dirac manifolds, 
action-angle variables near singularities,
stochastic systems, and so on. Even when there are no natural action variables, our approach still leads to useful normal forms for dynamical systems, which are not necessarily integrable.
\end{abstract}

\date{This version: February 2018, accepted for publication in ARMA}
\subjclass{37G05, 37J35,70H06,70H45}
\keywords{torus actions, action-angle variables, integrable system, 
contact, symplectic, Poisson structure, Dirac manifold, presymplectic}%

\maketitle

\tableofcontents

\section{Introduction}

Action-angle variables play a fundamental role in classical and quantum mechanics. They are
the starting point of the famous Kolmogorov--Arnold--Moser theory about the persistence of
quasi-preridicity of the motion of integrable Hamiltonian systems under perturbations
(see, e.g., \cite{BroerSevruyk-KAM2010,Dumas-KAM2014,KaPo-KAM2003}). 
They are also the starting point of geometric 
quantization rules which go back to the works of Bohr, Sommefeld, 
Epstein and Einstein in early 20th century
(see, e.g., \cite{BergiaNavarro-EinsteinQuantization2000}), and also of semi-classical quantization of integrable
Hamiltonian systems (see, e.g., \cite{San-Semiclassical2006}). 
The action functions are already written by contour integral formulas
(see Formula \eqref{eqn:MineurIntegral}) in the works of Burgers 
\cite{Burgers-Adiabatic1917},
Einstein \cite{Einstein-Quantization1917}, 
Levi-Civita \cite{LeviCivita-Adiabatic1927} and other physicists, and they also play the role of adiabatic  invariants in mechanics 
and physics. The quasi-periodicity of the movement of general proper 
integrable Hamiltonian systems in angle variables was 
discovered by Liouville in mid 19th century \cite{Liouville-1855}. 
The first essentially complete proof of the theorem about the 
existence of action-angle variables near a Liouville torus on a symplectic manifold, 
which is often called Arnold--Liouville  theorem, is due to the astrophysicist Mineur 
\cite{Mineur-AA1935,Mineur-AA1935-37}, who was also motivated by the quantization problem.
There are now many books which contain a proof of this theorem, see for example 
\cite{Arnold-Mechanics1989,GS-Symplectic1990,HoZe1994,LM-Symplectic1987, MM-1974}.
An interesting account on the early history of action-angle variables can be
found in a recent paper by Féjoz \cite{Fejoz-AA2013}.
 However, it seems to us that the available proofs in the literature 
are not yet ``very natural'': they contain arguments which are a bit tricky, 
and do not generalize easily to other contexts without a lot of additional work. 

There have been generalizations of Arnold--Liouville--Mineur 
action-angle variables theorem to various contexts, including noncommutatively 
integrable systems (see, e.g., \cite{Nekhoroshev-AA1972,MF-Noncommutative1978,DazordDelzant-AA1987}), systems on almost-symplectic
manifolds  \cite{FassoSansonetto-AA2007}, on contact manifolds 
(see, e.g., \cite{KhesinTaba-ContactIntegrable2010,Jovanovic_AAContact2011}), 
on Poisson manifolds 
\cite{LMV-AA2011}, and so on. 
Action-angle variables near singularities of integrable systems 
have also been studied (see, e.g., \cite{DufourMolino-AA1990, MirandaZung-NF2004,
San-Semiclassical2006,Zung-Integrable1996,Zung-Convergence2005,Zung-Torus2006}). 
However, there are many other natural contexts, including presymplectic and Dirac
manifolds, for which there was no systematic study of the existence of action-angle
variables, as far as I know.

In this paper, we develop a new general conceptual
approach for the study of existence of action-angle variables, 
which, in our opinion, is the most natural, easy to understand, 
and can be applied to a multitude of very different situations. If one is interested 
only in the classical Arnold--Liouville--Mineur theorem, then this approach will 
provide a very short and simple self-contained proof, only a few pages long. 
(Our proof in this paper finishes at page \pageref{ALM-Proof},
but that's because we have a long introduction and prove many other general 
results of independent interest along the way). Using our approach, one can also
recover practically all known results in the literature about the existence of
action-angle variable in various contexts, in a unifying way, with simple proofs,
and obtain a series of new results.

Our approach is based on the ``toric philosophy'' and consists of 3 parts:

i) \textit{Existence of associated torus actions for dynamical systems}. For example,
for integrable systems near Liouville tori, these actions are nothing but the Liouville
torus actions, which are provided by the classical Liouville theorem dating back 
to mid 19th century \cite{Liouville-1855}. For general analytic vector fields near singular points, 
these associated torus actions are exactly the ones appearing in the (formal or
analytic) Poincaré-Birkhoff normalization 
\cite{Zung-Convergence2002,Zung-Convergence2005}. 

ii) The \textit{fundamental conservation property} of these associated torus actions.
This is a fundamental property of dynamical systems, but whose proof is not very 
difficult and is provided in this paper. The philosophical 
idea behind this property comes from the notion of double commutant in algebra. 

iii) \textit{Simultaneous normalization} of the associated torus action together with
the underlying geometric structure preserved by it leads to action-angle variables
(if case of (quasi-)Hamiltonian systems) or other interesting normal forms.

The organization of this paper is as follows: 

In Section \ref{section:Conservation} we recall some basic notions about
integrable dynamical systems (which can be non-Hamiltonian), 
Liouville torus actions near Liouville tori, 
and establish the fundamental conservation property
for Liouville torus actions. (Theorems \ref{thm:TorusPreservesStructureI},
\ref{thm:TorusPreservesStructureIb}, \ref{thm:TorusPreservesStructureII}
and \ref{thm:TorusPreservesStructure4}).

In Section \ref{section:AA2Form} we show the existence of action-angle
variables for integrable Hamiltonian systems on manifolds with a differential
2-form, which is not necessarily closed nor nondegenerate. The classical
Arnold--Liouville--Mineur theorem, as well as results obtained by 
Nekhoroshev \cite{Nekhoroshev-AA1972} and Fassò--Sansonetto 
\cite{FassoSansonetto-AA2007} are presented as special cases of our general
results. We pay particular attention to (over-determined) action-angle variables for integrable Hamiltonian systems on \textit{presymplectic} manifolds, which can happen
quite often in practice (for example, by looking at the isoenergy submanifolds
of an integrable Hamiltonian system on a symplectic manifold), and which leads
to an interesting generalization of (integral) affine geometry, 
which we call (integral) co-affine geometry. Unlike affine manifolds, co-affine
manifolds have a lot of local invariants (e.g., curvature). A special case of
toric integrable Hamiltonian systems on presymplectic manifolds whose base spaces
are \textit{flat} co-affine manifolds was studied recently by Tudor
Ratiu and the author \cite{RatiuZung-Irrational2017}. 

In Section \ref{section:AAContact} we study the problem of action-angle variables
of contact manifolds, which is one of the problems posed by V.I. Arnold in 1995
\cite{Arnold-Problems2004}. Our result about action-angle variables
in the generic (transversal) case is a significant improvement of earlier results
by Banyaga--Molino \cite{BaMo-Contact1992} and Jovanovic \cite{Jovanovic_AAContact2011}.
We also obtain action-angle variables for the non-transversal case, 
which has not been studed by other authors, as far as we know.

Section \ref{section:AADirac} is the longest section of this paper, and is devoted to a study of integrable Hamiltonian systems on Dirac manifolds and their action-angle variables. There are two reasons why we are so interested in general Dirac manifolds:
i) There are many dynamical systems which cannot be written as Hamiltonian systems on symplectic or Poisson manifolds but which can be written as Hamiltonian systems on Dirac
manifolds; ii) The problem of action-angle variables on Dirac manifolds had not been
treated by any other author, as far as we know. In the special case, when the Dirac structure turns out to be a Poisson structure, we recover the main results of
Laurent--Miranda--Vanhaecke \cite{LMV-AA2011}, with a simpler proof. 

In Section \ref{Section:Singularities} we discuss the problem of action-angle variables
near singularities of dynamical systems and its relations with local normalizations
à la Poincaré--Birkhoff and associated torus actions 
\cite{Zung-Integrable1996,MirandaZung-NF2004,Zung-Convergence2002,
Zung-Toric2003,Zung-Convergence2005}. 

Finally, in Section \ref{Section:StochasticQuantum} we indicate how our general 
conceptual approach can lead to action-angle variables or at least interesting 
normal forms in other contexts: infinite-dimensional integrable systems, 
stochastic systems, quantum and semi-classical systems, etc.

\section{Fundamental conservation property of Liouville torus actions}
\label{section:Conservation}

\subsection{Integrable systems and Liouville tori}

Let us recall the following natural notion of integrability of dynamical systems which are not necessarily Hamiltonian
(see, e.g., \cite{Bogoyavlenskij-Extended1998,Zung-Convergence2002,Zung-Torus2006}): 

A $m$-tuple $(X_1\hdots,X_p, F_1,\hdots, F_q)$, 
where $p \geq 1, q \geq 0, p+q = m,$ $X_i$ are vector fields on
a $m$-dimensional manifold $M$ and $F_j$ are functions on $M$, is called an \textbf{\textit{integrable system}} of \textbf{\textit{type $(p,q)$}}
on $M$ if it satisfies the following commutativity and non-triviality conditions: \\
i) $[X_i,X_j] = 0 \ \ \forall i,j=1,\hdots,p$, \\
ii)  $X_i(F_j) = 0 \ \ \forall i \leq p, j \leq q$,  \\
iii) $X_1\wedge \hdots \wedge X_p \neq 0$ and $dF_1 \wedge \hdots \wedge dF_q \neq 0$ almost everywhere on $M$.

A dynamical system given by a vector field $X$ on a manifold $M$ is called 
\textbf{\textit{integrable}} if there is an integrable system  $(X_1\hdots,X_p, F_1,\hdots, F_q)$ of some type $(p,q)$ on $M$ with $X_1 = X$.

The above integrability notion is also called \textbf{\textit{non-Hamiltonian integrability}}. It does not mean that the system cannot admit any Hamiltonian structure, it simply means that we forget about the Hamiltonian structure, 
and look only at commuting flows and first integrals in the definition. 
If a Hamiltonian system on a symplectic manifold is integrable in the sense of Liouville or in noncommutative sense then it is also integrable in the above sense. But systems with non-holonomic constraints, which are a priori non-Hamiltonian, can also be integrable in the above sense.

By a \textbf{\textit{level set}} of an integrable system 
$(X_1\hdots,X_p, F_1,\hdots, F_q)$ we mean a connected component $N$ of
a joint level set
\begin{equation}
 \{F_1 = const,\hdots, F_q = const\}.
\end{equation}
Notice that, by definition, the vector fields $X_1,\hdots, X_p$ are tangent to the level sets of the system.
We will say that the system $(X_1\hdots,X_p, F_1,\hdots, F_q)$ is \textbf{\textit{regular}} at $N$ if $X_1\wedge \hdots \wedge X_p \neq 0$ 
and $dF_1 \wedge \hdots \wedge dF_q \neq 0$  everywhere on $N$. We will say
that the system is \textbf{\textit{proper}} if the map
$(F_1,\hdots,F_q): M \to \mathbb{R}^q$ is a proper topological map (so that
each level set is compact) and the system is regular at almost every level set.

The following theorem about the existence of a system-preserving torus action near a compact regular level set of an
integrable system is essentially due to Liouville \cite{Liouville-1855}: 

\begin{thm}[Liouville's theorem] \label{thm:Liouville}
Assume that $(X_1,\hdots,X_p,F_1,\hdots,F_q)$ in an integrable system of 
type $(p,q)$ on a manifold $M$  which
is regular at a compact level set $N$.  Then in a tubular neighborhood $\cU(N)$ there is, up to automorphisms of $\bbT^p$, 
a unique free torus action
\begin{equation}
 \rho: \bbT^p \times \cU(N) \to \cU(N)
\end{equation}
which preserves the system (i.e. the action preserves each $X_i$ and each $F_j$)
and whose orbits are regular level sets of the system.  In particular, $N$ is diffeomorphic to  $\bbT^p$, 
and
\begin{equation}
 \cU(N) \cong \bbT^p \times B^q
\end{equation}
with periodic coordinates $\theta_1 \pmod 1,\hdots,\theta_p \pmod 1$ on 
the torus $\bbT^p$ and coordinates
$(z_1,\hdots, z_q)$ on a $q$-dimensional ball $B^q$, such that $F_1,\hdots, F_q$ depend only on the variables
$z_1,\hdots, z_q,$ and the vector fields $X_i$ are of the type
\begin{equation}
 X_i = \sum_{j=1}^p a_{ij}(z_1,\hdots,z_q) \frac{\partial}{\partial \theta_j}.
\end{equation}
\end{thm}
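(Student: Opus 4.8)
The plan is to follow the classical strategy: first show that each level set $N$ near the regular compact level set is a torus, then construct the torus action by exhibiting a locally free $\bbR^p$-action whose isotropy lattice is locally constant, and finally choose the right generators to get the product structure $\bbT^p \times B^q$. First I would use conditions (i) and (ii) together with regularity at $N$: the commuting vector fields $X_1,\dots,X_p$ span a $p$-dimensional distribution tangent to the level sets, and since $dF_1\wedge\cdots\wedge dF_q \neq 0$ on $N$, the level sets foliate a tubular neighborhood $\cU(N)$ as a fibration over a $q$-dimensional ball $B^q$ (implicit function theorem / Ehresmann, using properness or just shrinking to get compact fibers). On each level set the $X_i$ are complete (compactness) and commuting, so they define an $\bbR^p$-action; since $X_1\wedge\cdots\wedge X_p\neq 0$, the orbits are open, hence (by connectedness of $N$) the action is transitive on $N$ and $N \cong \bbR^p/\Lambda_N$ for a discrete isotropy subgroup $\Lambda_N \subset \bbR^p$. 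Compactness of $N$ forces $\Lambda_N$ to be a full-rank lattice, so $N\cong \bbT^p$.

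Next I would promote this fiberwise picture to a neighborhood. Running the joint flow of $(X_1,\dots,X_p)$ gives a local $\bbR^p$-action $\Phi$ on $\cU(N)$ preserving each $F_j$ and each $X_i$ (the latter because the $X_i$ commute). The isotropy group $\Lambda_z$ at a point on the fiber over $z\in B^q$ varies, a priori, but it is locally the zero set of the smooth map $(t,z)\mapsto \Phi_t(\cdot) - \mathrm{id}$ near an element of the lattice; by the rank condition and the implicit function theorem the lattice generators depend smoothly on $z$, so $z\mapsto\Lambda_z$ is a smooth family of lattices, i.e. a lattice subbundle of the trivial bundle $B^q\times\bbR^p$. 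Shrinking $B^q$ (which is contractible) trivializes this subbundle: there is a smooth $\mathrm{GL}(p,\bbZ)$-valued... more precisely a smooth $\mathrm{GL}(p,\bbR)$-valued function $A(z)$ whose columns generate $\Lambda_z$. Reparametrizing the $\bbR^p$-action by $A(z)^{-1}$ yields an action whose isotropy lattice is the standard $\bbZ^p$ for every $z$; this descends to a free action $\rho:\bbT^p\times\cU(N)\to\cU(N)$. Pulling back the angle coordinates on the now-trivial torus bundle over $B^q$ gives $\theta_1,\dots,\theta_p \pmod 1$ with $(z_1,\dots,z_q)$ the pullback of coordinates on $B^q$; by construction $F_j = F_j(z)$ and each $X_i$, being $\rho$-invariant and tangent to fibers, is of the stated form $X_i = \sum_j a_{ij}(z)\,\partial/\partial\theta_j$.

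Finally, uniqueness up to $\mathrm{Aut}(\bbT^p)=\mathrm{GL}(p,\bbZ)$: any system-preserving torus action whose orbits are regular level sets must have orbits equal to the $N$'s, hence its infinitesimal generators span the same distribution as $X_1,\dots,X_p$; two such free $\bbT^p$-actions on the same fibration with the same orbits differ fiberwise by an automorphism of $\bbT^p$, and continuity plus discreteness of $\mathrm{GL}(p,\bbZ)$ forces this automorphism to be constant on the connected set $B^q$.

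The main obstacle I expect is the smoothness and triviality of the lattice subbundle $z\mapsto \Lambda_z$ — i.e., that one can choose generators of the period lattices depending smoothly on the base point and then globally straighten them over $B^q$. Establishing that the isotropy lattices form a smooth subbundle requires the implicit-function-theorem argument above and a uniformity (no collapsing of generators) coming from the regularity hypothesis $X_1\wedge\cdots\wedge X_p\neq 0$ on all of $N$; once that is in hand, the triviality over the contractible ball $B^q$ is routine, and the rest is bookkeeping.
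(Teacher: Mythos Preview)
Your proposal is correct and follows essentially the same route as the paper's (sketched) proof: trivial fibration of $\cU(N)$ over $B^q$ by compactness and regularity, transitive $\bbR^p$-action on each fiber giving $N\cong\bbR^p/\Lambda$, then straightening the lattice over the contractible base via a section and an isomorphism $\bbR^p/\bbZ^p\to\bbR^p/\Lambda_z$ to obtain the angle coordinates. Your treatment of the smooth dependence of the period lattice and of the uniqueness statement is in fact more explicit than the paper's sketch, which simply asserts the isomorphism without discussing smoothness in $z$.
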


A system of coordinates 
$$(\theta_1 \pmod 1,\hdots,\theta_p \pmod 1, z_1,\hdots, z_q)$$
on $\cU(N) \cong \mathbb{T}^p \times B^q$
given by the above theorem will be called a 
\textbf{\textit{Liouville system of coordinates}}.

The proof of the above theorem is absolutely similar to the case of integrable Hamiltonian systems on symplectic manifolds,
see, e.g., \cite{Bogoyavlenskij-Extended1998,Zung-Torus2006}. It consists of 
the following main points: 

1) The map
$(F_1,\hdots,F_q): \cU(N) \to \bbR^q$ from a tubular neighborhood of $N$ to $\bbR^q$ is a 
topologically trivial fibration by the level
sets, due to the compactness of $N$ and the regularity of $(F_1,\hdots,F_q)$ (attention: if $(F_1,\hdots,F_q)$ is not 
regular at $N$ then this fibration may be non-trivial and may be twisted even if the level sets are smooth); 

2) The vector fields
$X_1,\hdots,X_p$ generate a transitive action of $\bbR^p$ on the level sets near $N$, and the level sets are compact
and of dimension $p$, which imply that each level set $N_{f_1,\hdots, f_q}$ 
is a $p$-dimensional compact quotient of $\bbR^p$ by a discrete group $\Gamma$, i.e. a torus.

3) Consider a local section $S$ to the foliation by the level sets, i.e. a small disk which intersects each level set $N_{f_1,\hdots, f_q}$ near $N$
transversally at one point denoted by $s_{f_1,\hdots, f_q}$.
Let $\varphi^t_{X_i}$ be the flow of $X_i$. Composing the map 
$(t_1,\hdots, t_p) \mapsto \varphi^{t_1}_{X_1} \circ \hdots \circ 
\varphi^{t_p}_{X_p} (s_{f_1,\hdots, f_q})$ from $\mathbb{R}^q/\Gamma$ to
 $N_{f_1,\hdots, f_q}$  with an isomorphism from $\mathbb{R}^q/\mathbb{Z}^q$ to $\mathbb{R}^q/\Gamma$, one gets angular coordinates $\theta_1, \hdots, \theta_p$ which turn the $X_i$ into constant vector fields on
each level set $N_{f_1,\hdots, f_q}$.

Theorem \ref{thm:Liouville} shows that the  flow of the vector field $X=X_1$ of an integrable system
is quasi-periodic under some natural compactness and regularity conditions. This is the most fundamental geometrical
property of proper integrable dynamical systems.

Due to the above theorem, each $p$-dimensional compact level set $N$ of an integrable system of type $(p,q)$ on which the system is regular is called a \textbf{\textit{Liouville torus}}, and the torus $\mathbb{T}^p$-action
in a tubular neighborhood $\cU(N)$ of $N$ which preserves the system 
is called the  \textbf{\textit{Liouville torus action}}. Notice that 
this action is uniquely determined by the system, 
up to an automorphism of $\mathbb{T}^p$.

\subsection{Fundamental conservation property of Liouville torus actions}

The basic idea is the following meta-theorem, which is \textit{a new kind of conservation laws}: 

\begin{quote}
\textit{\textbf{Everything which is preserved by a dynamical 
system is also preserved by its associated torus actions}}. 
\end{quote}

In other words, the associated torus actions are \textit{double commutants} 
for dynamical systems. Some instances of this meta-theorem can be found, e.g., in \cite{Zung-Convergence2002,Zung-Convergence2005,Zung-Torus2006}.

In this subsection, we will turn the above meta-theorem into some rigorous 
theorems about the fundamental conservation property of Liouville torus actions, 
which play the role of associated torus actions for integrable systems
near Liouville tori.

Remark that the idea of double commutant torus actions having the fundamental conservation property is more general and also works for singularities of 
dynamical systems which are not necessarily integrable, and also for 
stochastic and quantum systems, see Section \ref{Section:Singularities} and 
Section \ref{Section:StochasticQuantum} of this paper.

 \begin{thm}[Fundamental conservation property, 1]  
 \label{thm:TorusPreservesStructureI}
Let $N$ be a Liouville torus of an integrable system  
$(X_1\hdots,X_p, F_1,\hdots, F_q)$ on a manifold $M$,
and $\cG \in \Gamma (\otimes^h TM \otimes^k T^*M)$
be a tensor field on $M$ which is preserved by all the
vector fields of the system: $\cL_{X_i} \cG = 0$ 
$\forall\, i=1,\hdots,p.$ Then the Liouville torus 
$\mathbb{T}^p$-action on a tubular neighborhood $\cU(N)$ of $N$
in $M$ also preserves $\cG$. 
\end{thm}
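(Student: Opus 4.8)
The plan is to work in a Liouville system of coordinates $(\theta_1,\dots,\theta_p,z_1,\dots,z_q)$ on $\cU(N)\cong\bbT^p\times B^q$ furnished by Theorem \ref{thm:Liouville}, in which $F_j=F_j(z)$, $X_i=\sum_{j=1}^p a_{ij}(z)\,\partial/\partial\theta_j$, the matrix $A(z)=(a_{ij}(z))$ is invertible everywhere on $B^q$ (since the orbits of the Liouville torus action are \emph{regular} level sets, i.e. $X_1\wedge\dots\wedge X_p\neq 0$ on each of them), and the Liouville torus action is the translation action in $\theta$, whose infinitesimal generators are $\partial/\partial\theta_1,\dots,\partial/\partial\theta_p$. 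Since $\bbT^p$ is connected, it suffices to prove that $\cL_{\partial/\partial\theta_k}\cG=0$ for $k=1,\dots,p$, equivalently that every nonzero $\theta$-Fourier mode of $\cG$ vanishes.

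The main step is a Fourier computation in the angle variables. Expand $\cG=\sum_{m\in\bbZ^p}e^{2\pi i\,m\cdot\theta}\,\cG_m(z)$, where each $\cG_m$ is a tensor field whose components in the coordinate frame $\{\partial/\partial\theta_*,\ \partial/\partial z_*,\ d\theta_*,\ dz_*\}$ depend on $z$ only; the series converges in $C^\infty_{\mathrm{loc}}$ because $\cG$ is smooth and $\bbT^p$ is compact, so the differential operators $\cL_{X_i}$ commute with the sum — a routine point, not an obstacle. Writing $m$ as a column vector, a direct computation gives
\[
\cL_{X_i}\cG=\sum_{m\in\bbZ^p}e^{2\pi i\,m\cdot\theta}\Bigl(2\pi i\,(A(z)m)_i\,\cG_m(z)+(D_i\cG_m)(z)\Bigr),
\]
where $D_i$ denotes $\cL_{X_i}$ restricted to the $\theta$-independent tensors. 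Because $X_i\bigl(h(z)\bigr)=0$, the operator $D_i$ is $C^\infty(B^q)$-linear, hence a bundle endomorphism, and on the frame it acts by $d\theta_j\mapsto \sum_l(\partial a_{ij}/\partial z_l)\,dz_l$, $dz_l\mapsto 0$, $\partial/\partial z_l\mapsto-\sum_j(\partial a_{ij}/\partial z_l)\,\partial/\partial\theta_j$, $\partial/\partial\theta_j\mapsto 0$, extended as a derivation. In particular $D_i$ is \emph{pointwise nilpotent}: it strictly lowers the weight $w:=(\text{number of }d\theta\text{-slots})+(\text{number of }\partial/\partial z\text{-slots})$ of a monomial tensor, so $D_i^{\,h+k+1}=0$ on $\otimes^hTM\otimes^kT^*M$. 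Thus the hypothesis $\cL_{X_i}\cG=0$ reads, mode by mode,
\[
\bigl(D_i+2\pi i\,(A(z)m)_i\bigr)\,\cG_m(z)=0,\qquad i=1,\dots,p,\ \ m\in\bbZ^p.
\]

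To conclude, fix $m\neq 0$ and suppose $\cG_m(z_0)\neq 0$ for some $z_0\in B^q$. Then $-2\pi i\,(A(z_0)m)_i$ is an eigenvalue of the linear map $D_i(z_0)$ on the tensor fibre over $z_0$; being nilpotent, $D_i(z_0)$ has only the eigenvalue $0$, so $(A(z_0)m)_i=0$ for every $i$, i.e. $A(z_0)m=0$. Since $A(z_0)$ is invertible, $m=0$, a contradiction. Hence $\cG_m\equiv 0$ for all $m\neq 0$, so $\cG=\cG_0(z)$ is independent of $\theta$ and the Liouville torus (translation) action preserves $\cG$. (Equivalently, one may first replace $\cG$ by $\cG-\int_{\bbT^p}\rho_{\vec s}^*\cG\,d\vec s$, which is still annihilated by every $X_i$ since $(\rho_{\vec s})_*X_i=X_i$, and the argument above shows this difference vanishes.)

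The conceptual heart — and the only genuine obstacle — is precisely that $\cL_{fX}\neq f\,\cL_X$ for a non-constant function $f$, so one cannot naively deduce $\cL_{\partial/\partial\theta_k}\cG=0$ from $\partial/\partial\theta_k=\sum_i(A(z)^{-1})_{ki}X_i$ together with $\cL_{X_i}\cG=0$: there are real zeroth-order correction terms. Passing to Fourier modes isolates exactly these corrections as the nilpotent operators $D_i$, and nilpotency against the non-vanishing frequencies $(A(z)m)_i$ is what kills the offending modes. Everything else — the choice of coordinates, the convergence of the Fourier series, and the bookkeeping that identifies $D_i$ — is routine.
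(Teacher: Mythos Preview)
Your proof is correct and shares its central insight with the paper's: on $\theta$-independent tensors the operator $\cL_{X_i}$ strictly lowers the weight $w=(\text{number of }d\theta\text{-slots})+(\text{number of }\partial_z\text{-slots})$, and is therefore fibrewise nilpotent. The paper packages this as a descending filtration $T^{h,k}_s$ (tensors of weight at most $s$) and runs a spectral-sequence-style induction: assuming $\hat\cG:=\cG-\bar\cG\in T^{h,k}_s$, the top-weight coefficients $\psi$ of $\hat\cG$ must satisfy $X_\alpha(\psi)=0$ for every $\alpha$, hence are constant on each Liouville torus, hence vanish (zero average), so $\hat\cG\in T^{h,k}_{s-1}$; iterating gives $\hat\cG=0$. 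You instead Fourier-decompose in $\theta$ and observe directly that each nonzero mode $\cG_m$ would be a common eigenvector of the nilpotent operators $D_i(z)$ with eigenvalues $-2\pi i(A(z)m)_i$, forcing $A(z)m=0$ and hence $m=0$ by invertibility of $A$. Your route is a bit more compact (no induction) and makes the role of the invertibility of $A(z)$ explicit, whereas the paper uses it only implicitly in the step ``$X_\alpha(\psi)=0$ for all $\alpha$ $\Rightarrow$ $\psi$ constant on each torus''. Conversely, the paper's filtration formulation has the advantage of being purely algebraic and not relying on Fourier analysis, which is why the same template transfers verbatim to the formal/analytic singular setting later in the paper (Theorem~\ref{thm:LocalNF-Conservation1}), with the Taylor degree playing the role of the weight.
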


\begin{remark}
The above theorem first appeared in my unpublished preprint in 2012
(arXiv:1204.3865), and was also included in the lecture notes of a course
that I gave on integrable non-Hamiltonian systems in CRM Barcelona 
in 2013 (see \cite{BRZ-Integrable2016}), but it has not been published
in any peer-reviewed research journal before. The following
proof of the theorem borrows ideas from the theory of spectral sequences 
in algebraic topology.
\end{remark}

\begin{proof}
Fix a Liouville coordinate system  
$$(\theta_1 \pmod 1, \hdots, \theta_p \pmod 1, z_1,\hdots, z_q)$$ 
in a tubular neighborhood
$\cU(N)$ of $N$ as given by Theorem \ref{thm:Liouville}. The Liouville
torus action is generated by the vector fields 
$\dfrac{\partial}{\partial \theta_i}$.

We will make a filtration of the space 
$\Gamma (\otimes^h TM \otimes^k T^*M)$ of  tensor fields of covariant 
order $k$  and contravariant order $h$ as follows:

The subspace $T^{h,k}_s$ consists of sections of  
$\otimes^h TM \otimes^h T^*M$ whose expression in Liouville coordinates contains only  terms which are, \textit{up to a permutation of the factors},  of the type
\begin{equation}
\frac{\partial}{\partial \theta_{i_1}} \otimes \hdots \otimes \frac{\partial}{\partial \theta_{i_a}} \otimes
\frac{\partial}{\partial z_{j_1}} \otimes \hdots \otimes \frac{\partial}{\partial z_{j_b}} \otimes
d\theta_{i'_1} \otimes \hdots \otimes d\theta_{i'_c} \otimes
dz_{j'_1} \otimes \hdots \otimes dz_{j'_d}
 \end{equation}
with $b+c \leq s$. For example,
\begin{equation} 
 T^{h,k}_0 =  \left\{ \sum_{i,j'} f_{i,j'}\frac{\partial}{\partial \theta_{i_1}} \otimes \hdots \otimes \frac{\partial}{\partial \theta_{i_h}} \otimes
dz_{j'_1} \otimes \hdots \otimes dz_{j'_k} \right\} .
\end{equation}
Put $T^{h,k}_{-1}= \{0\}$.  It is clear that
\begin{equation}
 \{0\} = T^{h,k}_{-1} \subset T^{h,k}_ 0 \subset T^{h,k}_ 1 
\subset \hdots \subset T^{h,k}_{h+k} = \Gamma (\otimes^kTM \otimes^h T^*M).
\end{equation}

Observe that the vector fields $\dfrac{\partial}{\partial \theta_i}$ and
the differential forms $d z_j$ are invariant under the flow of any of the
vector fields $X_\alpha$ ($\alpha =1,\hdots, p$), 
while the Lie derivative $\cL_{X_\alpha} \dfrac{\partial}{\partial z_j}$ (respectively, $\cL_{X_i} d\theta_i$) is a combination of
vector fields $\dfrac{\partial}{\partial \theta_1}, \hdots, \dfrac{\partial}{\partial \theta_p}$ (respectively, of 1-forms $dz_1,\hdots, d z_q$).
It follows immediately from this observation 
that the above filtration is stable under the Lie derivative of the vector fields $X_1,\hdots, X_p$, i.e., we have
\begin{equation}
\cL_{X_{\alpha}}\Lambda \in T^{h,k}_s \ \ \forall  s=0,\hdots,k+h, \ \forall \Lambda \in T^{h,k}_s,\ \forall \alpha = 1,\hdots,p.
\end{equation}

Since $\cL_{X_\alpha} \cG = 0$ for all $\alpha=1,\hdots,p$ by our hypothesis, and the Liouville torus action
commutes with the vector fields $X_\alpha$,  we also have that $\cL_{X_\alpha} \overline \cG = 0,$ 
where the overline means the average of a tensor with respect to the Liouville torus action.
Thus we also have
\begin{equation}
\cL_{X_\alpha}\hat {\cG} = 0  \ \forall \alpha =1,\hdots, p,
\end{equation}
where 
\begin{equation}
 \hat \cG = \cG - \overline \cG
\end{equation}
has average equal to 0.

We will show by induction that $ \hat \cG \in T^{h,k}_{s}$
for every $s$ going down from $h+k$ to $-1$. Of course we have
$ \hat \cG \in T^{h,k}_{h+k} = \Gamma (\otimes^kTM \otimes^h T^*M)$,
and at the end of the induction process we will get
$ \hat \cG \in T^{h,k}_{-1} = \{0\}$, i.e. $\hat \cG = 0$. 

Assume that we already have $ \hat \cG \in T^{h,k}_{s}$ for some $s$ with
$h+k  \geq s \geq 0$. Let us show that $ \hat \cG \in T^{h,k}_{s-1}$.

Consider a possible monomial 
term $\Psi$ in the expression of $\hat \cG$ which belongs to
$T^{h,k}_{s}$ but does not belong to $T^{h,k}_{s-1}$.
Up to a permutation of the factors, $\Psi$ is of the type 
$$\Psi = \psi \frac{\partial}{\partial \theta_{i_1}} \otimes \hdots \otimes \frac{\partial}{\partial \theta_{i_a}} \otimes
\frac{\partial}{\partial z_{j_1}} \otimes \hdots \otimes \frac{\partial}{\partial z_{j_b}} \otimes
d\theta_{i'_1} \otimes \hdots \otimes d\theta_{i'_c} \otimes
dz_{j'_1} \otimes \hdots \otimes dz_{j'_d}$$
such that $b+c =s$, with some coefficient function $\psi$.

According to the observation that we made above, 
in the decomposition of the Lie derivative $\cL_{X_\alpha} \Psi$
by the Leibniz rule, all the terms belong to $T^{h,k}_{s-1}$ 
except maybe the term
$$X_\alpha (\psi) \frac{\partial}{\partial \theta_{i_1}} \otimes \hdots \otimes \frac{\partial}{\partial \theta_{i_a}} \otimes
\frac{\partial}{\partial z_{j_1}} \otimes \hdots \otimes \frac{\partial}{\partial z_{j_b}} \otimes
d\theta_{i'_1} \otimes \hdots \otimes d\theta_{i'_c} \otimes
dz_{j'_1} \otimes \hdots \otimes dz_{j'_d}$$
which is a monomial term belonging to $T^{h,k}_{s} \setminus T^{h,k}_{s-1}$ 
if $X_\alpha (\psi) \neq 0$. But $\cL_{X_\alpha}\hat {\cG} = 0$, which 
implies in particular that $\cL_{X_\alpha}\hat {\cG} \in T^{h,k}_{s-1}$ and 
it cannot contain any monomial term in $T^{h,k}_{s} \setminus T^{h,k}_{s-1}$,
and so we must have $X_\alpha (\psi) = 0$ (for every $\alpha=1,\hdots, p$), i.e.
$\psi$ is invariant with respect to the vector field $X_1,\hdots, X_p$.
It means that $\psi$ is constant on each Liouville torus. On the other hand,
by the definition of $\hat G$, the mean value of $\psi$ on each
Liouville torus is zero, so in fact $\psi$ is identically zero, and there
is no monomial term of $\cG$ in $T^{h,k}_{s} \setminus T^{h,k}_{s-1}$, i.e.
we have $\cG \in T^{h,k}_{s-1}$.

Thus, we have shown by induction that $\hat \cG = 0,$ i.e. $\cG = \overline \cG$  
is invariant with respect to the Liouville torus action.
\end{proof}

Theorem \ref{thm:TorusPreservesStructureI} admits the following slightly stronger version:

 \begin{thm}[Fundamental conservation property, 2]
 \label{thm:TorusPreservesStructureIb}
Let $N$ be a Liouville torus of an integrable system  
$(X_1\hdots,X_p, F_1,\hdots, F_q)$ on a manifold $M$,
such that in a tubular neighborhood $\cU(N) \cong \mathbb{T}^p \times B^q$
of $N$ the set of points ${\bf z} \in B^q$ such that the orbits
of the vector field $X_1$ on the Liouville torus 
$\mathbb{T}^p \times \{ {\bf z} \}$ are dense in the torus (i.e.,
the flow of $X_1$ is totally irrational on the torus) is a dense
subset of $B^q$. Then any tensor field 
$\cG \in \Gamma (\otimes^h TM \otimes^k T^*M)$ which is invariant
with respect to $X_1$ is also invariant with respect to the 
 Liouville torus $\mathbb{T}^p$-action on $\cU(N)$.
\end{thm}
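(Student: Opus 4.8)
The plan is to reuse, almost verbatim, the filtration (spectral-sequence) argument from the proof of Theorem \ref{thm:TorusPreservesStructureI}; the only new input is a substitute for the step where invariance under \emph{all} of $X_1,\dots,X_p$ was used to force a coefficient function to be constant on each Liouville torus. Fix a Liouville coordinate system $(\theta_1,\dots,\theta_p,z_1,\dots,z_q)$ on $\cU(N)\cong\bbT^p\times B^q$ as in Theorem \ref{thm:Liouville}, so that $X_1=\sum_j a_{1j}(z)\,\partial/\partial\theta_j$ and the Liouville torus action is generated by the $\partial/\partial\theta_i$. Because the coefficients $a_{1j}$ depend only on $z$, the torus action commutes with $X_1$; hence, writing $\overline{\cG}$ for the average of $\cG$ over the torus action and $\hat{\cG}=\cG-\overline{\cG}$, one has $\cL_{X_1}\overline{\cG}=\overline{\cL_{X_1}\cG}=0$, so $\cL_{X_1}\hat{\cG}=0$, and $\hat{\cG}$ has zero average on every torus $\bbT^p\times\{\mathbf z\}$. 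It suffices to show $\hat{\cG}=0$.

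Introduce the same decreasing filtration $\{0\}=T^{h,k}_{-1}\subset T^{h,k}_0\subset\cdots\subset T^{h,k}_{h+k}=\Gamma(\otimes^h TM\otimes^k T^*M)$ by the number $b+c$ of $\partial/\partial z$- and $d\theta$-factors in a monomial. The same computation as before ($\cL_{X_1}\partial/\partial\theta_i=0$, $\cL_{X_1}dz_j=0$, $\cL_{X_1}\partial/\partial z_k$ a combination of the $\partial/\partial\theta$'s, $\cL_{X_1}d\theta_i$ a combination of the $dz$'s) shows each $T^{h,k}_s$ is stable under $\cL_{X_1}$ and that for a monomial $\Psi=\psi\,\Theta$ with $\Theta$ a tensor monomial of filtration level exactly $s$, the level-$s$ part of $\cL_{X_1}\Psi$ is $X_1(\psi)\,\Theta$. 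Then I run the downward induction on $s$: assuming $\hat{\cG}\in T^{h,k}_s$, the vanishing of $\cL_{X_1}\hat{\cG}$ forces $X_1(\psi_\Theta)=0$ for the coefficient $\psi_\Theta$ of each level-$s$ tensor monomial $\Theta$ occurring in $\hat{\cG}$.

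It remains to upgrade $X_1(\psi_\Theta)=0$ to $\psi_\Theta=0$, and this is where the density hypothesis is used. Since $X_1$ is tangent to each torus $\bbT^p\times\{\mathbf z\}$, the restriction $\psi_\Theta|_{\bbT^p\times\{\mathbf z\}}$ is annihilated by the constant vector field $X_1|_{\mathbf z}=\sum_j a_{1j}(\mathbf z)\,\partial/\partial\theta_j$; if $\mathbf z$ is one of the dense set of points for which the flow of $X_1$ is totally irrational on that torus, every orbit is dense, so the smooth function $\psi_\Theta|_{\bbT^p\times\{\mathbf z\}}$, being constant along a dense orbit, is constant on the whole torus, and combined with its vanishing average this gives $\psi_\Theta|_{\bbT^p\times\{\mathbf z\}}\equiv 0$. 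Thus $\psi_\Theta$ vanishes on a dense subset of $\cU(N)$, hence identically by continuity, so $\hat{\cG}$ has no monomial of level $s$, i.e. $\hat{\cG}\in T^{h,k}_{s-1}$. At the end of the induction $\hat{\cG}\in T^{h,k}_{-1}=\{0\}$, so $\cG=\overline{\cG}$ is invariant under the Liouville torus action.

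The genuinely new point, and the step I would be most careful about, is the passage from a dense set of tori to all of $\cU(N)$: one must check that the coefficient functions $\psi_\Theta$ are honestly smooth (they are, being coordinate components of a smooth tensor once a canonical ordering of the monomial factors is fixed), so that vanishing on a dense set forces vanishing everywhere, and that ``zero average on every torus'' is indeed inherited by each $\psi_\Theta$ — which holds because all of $\partial/\partial\theta_i,\partial/\partial z_j,d\theta_i,dz_j$ are invariant under the $\theta$-translations, so averaging the tensor $\cG$ amounts to averaging its coefficient functions. Everything else is identical to the proof of Theorem \ref{thm:TorusPreservesStructureI}.
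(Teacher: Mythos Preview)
Your proof is correct and follows essentially the same approach as the paper, which merely says the argument is ``absolutely similar'' to that of Theorem~\ref{thm:TorusPreservesStructureI} and rests on the observation that, under the complete irrationality hypothesis, any function annihilated by $X_1$ is automatically invariant under the Liouville torus action. You have faithfully unpacked this sketch, and your care about continuity (vanishing on a dense union of tori implies vanishing everywhere) and about why averaging the tensor amounts to averaging its coefficients is exactly the right way to fill in the details.
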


Notice that in Theorem \ref{thm:TorusPreservesStructureIb}, we do not require
$\cG$ to be invariant with respect to $X_2,\hdots, X_p$. We only require it to be 
invariant with respect to the original vector field $X = X_1$ of an integrable
dynamical system, and this invariance is often automatically obtained in practice,
e.g., a Hamiltonian vector field will automatically preserve the symplectic structure, but the additional vector fields are not required to be symplectic. 

The proof of Theorem \ref{thm:TorusPreservesStructureIb} is absolutely
similar to the proof of Theorem \ref{thm:TorusPreservesStructureI}
and is based on the observation that, under the above assumptions about
$X_1$, any function which is invariant with respect to $X_1$ is also
invariant with respect to the Liouville torus action in $\cU(N)$.

We will call the condition about dense orbits in Liouville tori
imposed on the vector field $X_1$ in Theorem \ref{thm:TorusPreservesStructureIb}
the \textbf{\textit{complete irrationality}} condition.
Theorem \ref{thm:TorusPreservesStructureIb} is stronger than Theorem
\ref{thm:TorusPreservesStructureI}, because Theorem \ref{thm:TorusPreservesStructureI}
can be deduced from it by changing $X_1$ to a new vector field $X_1' = \sum c_i X_i$,
which is a linear combination of $X_1,\hdots, X_p$ with constant coefficients, and which
satisfies the complete irrationality condition. For integrable Hamiltonian systems
in the sense of Liouville, this complete irrationality is implied by the Kolmogorov's
nondegeneracy condition in K.A.M theory, see, e.g., 
\cite{Dumas-KAM2014,Zung-Kolmogorov1996}.

The above theorem can be applied to many kinds of underlying geometric structures which are preserved by the systems, e.g. volume form (isochore systems), Riemannian
metric, Nambu structure, symplectic or Poisson structure (Hamiltonian systems), 
symmetry groups or algebras (generated by vector fields, 
which are considered as tensors), and so on.
However, there are some geometric structures, e.g., contact distributions and
Dirac structures, which can't be written as tensors. To deal with them, we have
to extend Theorem \ref{thm:TorusPreservesStructureI} to the case of \textit{subbundles of natural vector bundles} 
preserved by the system, as will be explained below.

We will say that a tensor field $\cG$ on $M$ 
is \textbf{\textit{conformally conserved}} 
by the system $(X_1\hdots,X_p, F_1,\hdots, F_q)$
(or \textbf{\textit{conformally invariant}}) 
if for each $i=1,\hdots,p$ there is a (smooth) 
function $f_i$ on $M$ such that $\cL_{X_i} \cG = f_i. \cG$.

\begin{theorem}[Fundamental conservation property, 3]
\label{thm:TorusPreservesStructureII}
With the above notations, if a tensor field $\cG$ is conformally conserved by 
the integrable system $(X_1\hdots,X_p, F_1,\hdots, F_q)$ in a neighborhood of 
a Liouville torus, or if $X_1$ satisfies the complete irrationality condition and
$\cG$ is conformally invariant with respect to $X_1$,  
then $\cG$ is also conformally invariant with respect to 
the Liouville $\bbT^p$-action.
\end{theorem}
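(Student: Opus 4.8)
The plan is to deduce the conformal statement from the genuine one already proved in Theorems~\ref{thm:TorusPreservesStructureI} and~\ref{thm:TorusPreservesStructureIb}, by manufacturing a positive scalar function $\lambda$ which absorbs the conformal factors, so that $\lambda\cG$ is honestly conserved (resp.\ honestly $X_1$-invariant) and the earlier results apply to $\lambda\cG$. Throughout I work in a Liouville coordinate system on a tubular neighbourhood $\cU(N)\cong\bbT^p\times B^q$, so that $X_i=\sum_j a_{ij}({\bf z})\,\partial/\partial\theta_j$ with $(a_{ij}({\bf z}))$ invertible, and I write $\cL_{X_i}\cG=f_i\cG$, with only the index $i=1$ present in the second case.

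The first step is to replace the tensor $\cG$ by a scalar carrying the same conformal behaviour. Expanding $\cG$ in the Liouville frame $\{\partial/\partial\theta_k,\partial/\partial z_l,d\theta_k,dz_l\}$ and running the filtration / leading-term argument from the proof of Theorem~\ref{thm:TorusPreservesStructureI} (the Lie derivatives $\cL_{X_i}$ lower the filtration degree), one finds a coefficient function $\psi$ — the coefficient of some leading monomial, not identically zero near $N$ — satisfying the \emph{scalar} cocycle equation $X_i(\psi)=f_i\,\psi$ for every $i$ (resp.\ for $i=1$). This reduces the problem to understanding such functions $\psi$ on the Liouville tori.

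Next I would show that the conformal factors $f_i$ are, along each Liouville torus, the ``$X_i$-gradient'' of a globally defined function. Differentiating $X_i(\psi)=f_i\psi$ along $X_j$ and using $[X_i,X_j]=0$ gives $X_i(f_j)=X_j(f_i)$, so the $1$-form $\beta_{\bf z}=\sum_i f_i\,\alpha^i$ on $\bbT^p\times\{{\bf z}\}$ (with $\alpha^1,\dots,\alpha^p$ the coframe dual to $X_1,\dots,X_p$) is closed. It is moreover exact: integrating the scalar ODE satisfied by $\psi$ around any closed orbit of a rational translation-invariant combination $\sum_i c_iX_i$ through a point where $\psi\neq0$ forces $\oint\beta_{\bf z}=0$, and since parallel closed orbits are homologous and such classes span $H_1(\bbT^p;\bbR)$, $\beta_{\bf z}=d\psi_{\bf z}$ for a unique function $\psi_{\bf z}$ with $\int_{\bbT^p}\psi_{\bf z}=0$, depending smoothly on ${\bf z}$. (In the second case, where only $X_1(\psi)=f_1\psi$ is available but $X_1$ is completely irrational, one instead takes $\lambda=1/\psi$ directly on the open set $\{\psi\neq0\}$.) Setting $\lambda=e^{-\psi_{\bf z}}>0$, one checks $X_i(\log\lambda)=-X_i(\psi_{\bf z})=-\beta_{\bf z}(X_i)=-f_i$, hence $\cL_{X_i}(\lambda\cG)=X_i(\lambda)\cG+\lambda f_i\cG=0$ for all $i$. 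Now Theorem~\ref{thm:TorusPreservesStructureI} (resp.\ Theorem~\ref{thm:TorusPreservesStructureIb}) applies to $\lambda\cG$, giving $\cL_{\partial/\partial\theta_j}(\lambda\cG)=0$; therefore $\cL_{\partial/\partial\theta_j}\cG=\cL_{\partial/\partial\theta_j}(\lambda^{-1}\!\cdot\lambda\cG)=(\partial_{\theta_j}\psi_{\bf z})\,\cG$, which is exactly conformal invariance under the Liouville $\bbT^p$-action, with factor $g_j=\partial_{\theta_j}\psi_{\bf z}$.

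The step I expect to be the main obstacle is establishing the exactness of $\beta_{\bf z}$ on every Liouville torus — equivalently, that $\lambda$ can be built globally and smoothly on $\cU(N)$ — together with the bookkeeping forced by the zeros of $\cG$ (and of the chosen leading coefficient $\psi$): one must first note that the asserted conformal relation holds trivially on the closed set where $\cG$ vanishes, then argue by density and continuity that the periods of $\beta_{\bf z}$ vanish for all ${\bf z}$, not merely those on which $\cG$ is not identically zero, and that the resulting $g_j$ extends smoothly across the zero locus. Once the compensating factor $\lambda$ is in hand, the rest of the argument is the same leading-term and averaging mechanism already used to prove Theorem~\ref{thm:TorusPreservesStructureI}.
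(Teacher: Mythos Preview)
Your overall strategy is the same as the paper's: manufacture a positive scalar $\lambda$ so that $\lambda\cG$ is genuinely invariant, then invoke Theorems~\ref{thm:TorusPreservesStructureI}/\ref{thm:TorusPreservesStructureIb}. You also correctly identify the leading coefficient $\psi$ of $\cG$ (in the filtration from the proof of Theorem~\ref{thm:TorusPreservesStructureI}) and the scalar relation $X_i(\psi)=f_i\psi$ as the heart of the matter. Where you diverge from the paper is in the handling of the two cases and of the ``main obstacle'' you flag.

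First, your treatment of the case ``all $X_i$'' via the closed leafwise $1$-form $\beta_{\bf z}$ and its periods is unnecessary work. The paper simply reduces this case to the completely irrational one by replacing $X_1$ with a generic constant combination $X_1'=\sum_i c_iX_i$ (which still preserves $\cG$ conformally), and then runs the single-vector-field argument. This avoids the whole discussion of exactness of $\beta_{\bf z}$ and smooth dependence on ${\bf z}$.

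Second, the obstacle you single out --- globality of $\lambda$ and the zeros of $\psi$ --- largely dissolves once you notice what the scalar ODE $X_1(\psi)=f_1\psi$ actually forces. Along each orbit of $X_1$ this is a first-order linear ODE, so the zero set of $\psi$ is closed and orbit-saturated; on a torus where $X_1$ has dense orbits, $\psi$ is therefore either identically zero or nowhere zero. Since $\psi$ is a nonvanishing leading coefficient somewhere, it is nowhere zero on that torus, hence nowhere zero after shrinking $\cU(N)$. Then $g=-\ln|\psi|$ is a global smooth solution of $X_1(g)+f_1=0$ on $\cU(N)$, and $\lambda=e^{g}$ works. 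There is no need to argue by density/continuity across a zero locus or to worry about extending $g_j$ smoothly; the zero locus is empty. Your ``case 2'' choice $\lambda=1/\psi$ on $\{\psi\neq 0\}$ is exactly right --- you just stopped short of observing that this set is all of $\cU(N)$.

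So your proposal is correct in outline but takes a longer route than needed: replace the $\beta_{\bf z}$ detour by the irrational-combination reduction, and replace the density/extension bookkeeping by the observation that the leading coefficient is nowhere zero.
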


\begin{proof}
The proof of Theorem \ref{thm:TorusPreservesStructureII} can be reduced
to the proof of Theorem \ref{thm:TorusPreservesStructureI}, 
by multiplying  $\cG$ by an appropriate function.

Indeed, consider the case when $X_1$ satisfies the complete irrationality condition,
and let $\cG \in \Gamma (\otimes^h TM \otimes^k T^*M)$  be conformally invariant
with respect to $X_1$: $\cL_{X_1} \cG = f \cG$ for some function $f$. 
(The case when $\cG$ is invariant with respect to $X_1,\hdots, X_p$ but wihout
the irrationality condition can be reduced to this case by taking a linear
combination $X_1' = \sum_{i=1}^p c_i X_i$ with appropriate constant coefficients
$c_i$ so that $X_1'$ is completely irrational and preserves $\cG$).

We assume that $\cG \neq 0$, and  want to show that there is a function $g$
such that $\exp(g) \cG$ is invariant with respect to $X_1$. 
By conformal invariance, we have 
$\cL_{X_1} (\exp(g)\cG) = X_1(g) \exp(g) \cG + f \exp(g) \cG$,
so the equation to solve is 
$$ X_1(g) + f =0, $$
which of course locally has a solution, which is unique up to a function
which is constant on the orbits of $X_1$. The problem is that
maybe it doesn't have a global smooth solution in $\cU(N)$. So we have to show that
a global solution in fact exists.

With respect to the filtration given in
the proof of Theorem \ref{thm:TorusPreservesStructureI}, there is a number
$s$ such that $\cG \in  T^{h,k}_s \setminus T^{h,k}_{s-1}$. Let
\begin{equation}
h \frac{\partial}{\partial \theta_{i_1}} \otimes \hdots \otimes 
\frac{\partial}{\partial \theta_{i_a}} \otimes
\frac{\partial}{\partial z_{j_1}} \otimes \hdots \otimes 
\frac{\partial}{\partial z_{i_b}} \otimes
d\theta_{i'_1} \otimes \hdots \otimes d\theta_{i'_c} \otimes
dz_{j'_1} \otimes \hdots \otimes dz_{j'_d}
 \end{equation}
with $b+c=s$, up to a permutation of the factors, 
be a term of $\cG$ of highest filtration degree $b+c =s$
with non-zero coefficient function $h$. Then the term of the 
same type in $\cL_{X_1}\cG$ has coefficient $X_1(h)$. 
Since $\cL_{X_1} \cG = f\cG$ and $h \neq 0$ we must have $f = X_1(h)/h$. 
It implies (also due to the fact that $X_1$ is completely irrational) 
in particular that $h \neq 0$ everywhere in a small tubular neighborhood
$\cU(N)$ of $N$, and that the solutions in $\cU(N)$ of the equation 
$ X_1(g) + f =0$ are $g = - \ln(|h|) + const.$, and we are done.
\end{proof}

Given a manifold $M$, the vector bundles on $M$ which can be obtained from the tangent and cotangent bundles $TM$, $T^*M$
and the trivial bundle $\mathbb{R} \times M$ by operations of
taking sums and tensor products will be called
\textbf{\textit{natural vector bundles}} over $M$.

\begin{theorem}[Fundamental conservation property, 4]
\label{thm:TorusPreservesStructure4}
Let $\mathcal{V}$ be a subbundle of a natural vector bundle over a
manifold $M$, which is conserved by an integrable system on $M$, 
or by the completely irrational
vector field $X_1$ of the system. Then the  
Liouville torus action near any Liouville torus of the system preserves $\cV$.

\end{theorem}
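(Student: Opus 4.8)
The plan is to reduce the subbundle statement to the tensor-field statement of Theorem \ref{thm:TorusPreservesStructureI} (equivalently \ref{thm:TorusPreservesStructureIb}) by encoding a rank-$r$ subbundle $\cV$ of a natural vector bundle $E \to M$ as a tensor. The cleanest device is the \emph{projection operator}: locally a subbundle is the image of a bundle endomorphism, and globally we can always complement $\cV$ (using a partition of unity) to write $E = \cV \oplus \cW$ and take $P \in \Gamma(\mathrm{End}\, E) = \Gamma(E \otimes E^*)$ to be the projection onto $\cV$ along $\cW$, characterized by $P^2 = P$ and $\mathrm{im}\, P = \cV$. Since $E$ is built from $TM$, $T^*M$ and the trivial bundle by sums and tensor products, $\mathrm{End}\,E = E \otimes E^*$ is again a natural vector bundle, hence a tensor bundle, so $P$ is an honest tensor field to which Theorem \ref{thm:TorusPreservesStructureI} applies --- \emph{provided} we can arrange $P$ to be invariant under the system.

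The key steps, in order. First, observe that $\cV$ being preserved by $X_i$ means the flow $\varphi^t_{X_i}$ maps $\cV$ to $\cV$ (the natural lift of the flow of $X_i$ to $E$ preserves the subbundle); equivalently $\cL_{X_i}$ maps sections of $\cV$ into sections of $\cV$. Second, I would \emph{not} work with an arbitrary complement, since a generic $P$ need not be $X_i$-invariant; instead I average. Starting from any smooth projection $P_0$ onto $\cV$, the pushforward $(\varphi^t_{X_i})^* P_0$ is again a projection onto $\cV$ for each $t$ and each $i$ (because $\cV$ is flow-invariant), and since the $X_i$ commute and (under the complete-irrationality hypothesis, or the commuting-system hypothesis) their joint flow has compact closure equal to the Liouville torus action on each fiber of $\cU(N) \cong \bbT^p \times B^q$, I can average $P_0$ over the closure of the joint flow — equivalently over the Liouville $\bbT^p$-action — to obtain $P := \overline{P_0}$. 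The average of projections onto the \emph{same} subspace $\cV_x$ in a fixed fiber $E_x$ is again a projection onto $\cV_x$: idempotency is preserved because all the $(\varphi^t)^*P_0$ have the same image $\cV_x$ and, one checks, a common... here one must be slightly careful, so the honest move is to average the \emph{complement} data rather than $P_0$ itself — pick an auxiliary bundle metric $g_0$ on $E$, average it over the torus action to get an invariant metric $g = \overline{g_0}$ (a section of $E^* \otimes E^*$, to which Theorem \ref{thm:TorusPreservesStructureI} already applies since $\cL_{X_i} g_0$ need not vanish but... no — use instead that $g$ can be taken torus-invariant by construction of the average), and let $P$ be the $g$-orthogonal projection onto $\cV$. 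Third, apply Theorem \ref{thm:TorusPreservesStructureI} (resp. \ref{thm:TorusPreservesStructureIb}) to conclude $\rho(t)^*P = P$ for all $t \in \bbT^p$; since $\mathrm{im}\,P = \cV$, this gives $\rho(t)(\cV) = \cV$, i.e. the Liouville torus action preserves $\cV$. Fourth, the conformal case: if $\cV$ is only \emph{conformally} preserved (i.e. preserved as a subbundle, with no normalization) the projection $P$ is an honest tensor and the argument above already handles it directly — there is no conformal factor for a projection; alternatively one invokes Theorem \ref{thm:TorusPreservesStructureII}.

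The main obstacle is the subtle point flagged above: producing a \emph{system-invariant} tensor whose image (or kernel) is exactly $\cV$, starting from a bundle that may have no natural invariant complement. Averaging $\cL_{X_i} g_0$ does not vanish, so I cannot directly invoke a ``preserved tensor'' hypothesis for $g_0$; the resolution is that the average $g := \overline{g_0}$ over the Liouville torus action is, by construction, $\bbT^p$-invariant (hence $X_i$-invariant, since the $X_i$ generate a dense subgroup of this action on each fiber under the relevant hypothesis), and then the $g$-orthogonal projection $P$ onto the flow-invariant subbundle $\cV$ is automatically $X_i$-invariant: for $v \in \cV_x$, $w \perp_g \cV_x$, both $\cV$ and its $g$-orthogonal complement are carried to themselves by the flow, so $\cL_{X_i}P = 0$. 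Once $\cL_{X_i} P = 0$ is in hand, Theorem \ref{thm:TorusPreservesStructureI} does the rest. One should double-check that the averaging over $\bbT^p$ genuinely produces a $\bbT^p$-invariant tensor (standard, since $\bbT^p$ is compact and acts smoothly on $\cU(N)$ and on every natural bundle over it), and that $\cV$ restricted to $\cU(N)$ is globally complemented (true since $\cU(N) \cong \bbT^p \times B^q$ and $B^q$ is contractible, or simply by a partition of unity on the compact manifold $\cU(N)$ after shrinking).
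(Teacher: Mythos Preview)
Your proposal has a genuine gap at the crucial step. You claim that the torus-averaged metric $g = \overline{g_0}$ is ``$\bbT^p$-invariant (hence $X_i$-invariant, since the $X_i$ generate a dense subgroup of this action on each fiber).'' This implication is false: $\bbT^p$-invariance of a tensor does \emph{not} imply $X_i$-invariance. The generators of the Liouville action are $Z_j = \partial/\partial\theta_j$, while $X_i = \sum_j a_{ij}(z)\, Z_j$ with $z$-dependent coefficients; since the Lie derivative is not $C^\infty(M)$-linear in the vector field, $\cL_{Z_j} g = 0$ for all $j$ does not give $\cL_{X_i} g = 0$. Concretely, on $\bbT^1 \times \bbR$ with $X_1 = z\,\partial/\partial\theta$, the metric $g = d\theta\otimes d\theta + dz\otimes dz$ is $\bbT^1$-invariant but $\cL_{X_1} g = d\theta\otimes dz + dz\otimes d\theta \neq 0$. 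Geometrically, the flows $\varphi^t_{X_i}$ translate each Liouville torus by an amount depending on $z$, so their differentials mix $\theta$- and $z$-directions; the closure of the joint flow in $\mathrm{Diff}(\cU(N))$ is \emph{not} the $\bbT^p$-action (your ``equivalently'' is not an equivalence). Without an $X_i$-invariant $g$ you cannot conclude that $\cV^{\perp_g}$ is $X_i$-invariant, so you have no $X_i$-invariant projection $P$ to feed into Theorem~\ref{thm:TorusPreservesStructureI}. And averaging $P_0$ directly over $\bbT^p$ is circular: $\rho(t)^* P_0$ projects onto $\rho(t)^{-1}(\cV)$, which equals $\cV$ only if $\cV$ is already $\bbT^p$-invariant.

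The paper sidesteps this by passing to \emph{conformal} invariance rather than strict invariance. It encodes $\cV$ by a fiberwise volume element $\sigma \in \Gamma(\wedge^r E)$ with $\sigma_x$ spanning the line $\wedge^r \cV_x$ (taking a double cover if $\cV$ is non-orientable). Since the flow of each $X_i$ preserves $\cV$, it sends $\wedge^r\cV$ to itself, so automatically $\cL_{X_i}\sigma = f_i\,\sigma$ for some function $f_i$ --- no averaging needed. Theorem~\ref{thm:TorusPreservesStructureII} then gives conformal $\bbT^p$-invariance of $\sigma$, hence $\bbT^p$-invariance of the line bundle $\wedge^r\cV$, hence of $\cV$. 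The moral: one can always write down a \emph{conformally} $X_i$-invariant tensor determining $\cV$, whereas manufacturing a \emph{strictly} $X_i$-invariant one (your $P$) is essentially as hard as the theorem itself.
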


\begin{proof}
Theorem \ref{thm:TorusPreservesStructure4} can be reduced to Theorem
\ref{thm:TorusPreservesStructureII} by using a tensor field which is a
volume element on each fiber of the subbundle $\mathcal{V}$. (The exterior product of the
components of a basis of a vector space is a contravariant volume element
of that vector space). This tensor field is not invariant with respect 
to the system in general, but it will be conformally invariant, and it characterizes the subbundle $\mathcal{V}$.

There is a small technical problem: maybe this volume tensor field 
cannot be defined globally due to the possible non-orientability of the 
vector subbundle $\mathcal{V}$ (i.e., the holonomy does not preserve the orientation of the fibers). But this situation can be remedied easily by taking a double covering of the system.
\end{proof}

\begin{remark}
Any tensor field can be viewed as a linear map from a natural
vector bundle to another natural vector bundle (e.g., a differential 
$k$-form on $M$ can be viewed as an anti-symmetric linear map from the bundle 
$\otimes^k TM$ to the trivial bundle $\mathbb{R}$ on $M$, or also a linear map
from $\otimes^{k-1} TM$ to $T^*M$, etc.), and the graph of such a linear map 
(which also characterizes our tensor field) is a subbundle of the sum
of the two bundles, and this sum is also a natural bundle. So any tensor field
may be viewed as a subbundle of a natural vector bundle. 
For example, both differential 2-forms and
2-vector fields can be viewed as 2-dimensional subbundles of $TM \oplus T^*M$.
But the converse is not true.
\end{remark}

Theorem \ref{thm:TorusPreservesStructure4} can be applied to 
invariant distributions, e.g. nonholonomic constraints and contact structures,
to systems on Dirac manifolds (which is one of the origins of this paper),
and so on.

Another generalization of the fundamental conservation property is to
differential operators which are preserved by the system. In fact, we have
the following result, which was proved recently by N.T. Thien and the author in 
\cite{ZungThien-Reduction2015} using the same method of filtration, 
and which is useful for the study of reduction
and  action-angle variables for stochastic and quantum systems:

\begin{theorem}[\cite{ZungThien-Reduction2015}] 
If $\Lambda$ is a linear differential operator on $M$ 
which is preserved by an integrable system $(X_1,\hdots, X_p,F_1,\hdots, F_q)$, 
or by a completely irrational component $X_1$ of the system, then 
$\Lambda$ is also invariant with respect to the Liouville torus action
in the neighborhood of  any Liouville torus of the system.
\end{theorem}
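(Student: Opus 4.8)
The plan is to mimic the filtration argument of Theorem \ref{thm:TorusPreservesStructureI}, now carried out at the level of differential operators rather than tensor fields. First I would fix a Liouville coordinate system $(\theta_1,\hdots,\theta_p,z_1,\hdots,z_q)$ on $\cU(N)$ as provided by Theorem \ref{thm:Liouville}, so that the Liouville torus action is generated by the $\partial/\partial\theta_i$ and the vector fields $X_\alpha$ have the form $\sum_j a_{\alpha j}(z)\,\partial/\partial\theta_j$. In these coordinates a linear differential operator $\Lambda$ of order $\leq r$ is a finite sum of terms $b(\theta,z)\,\partial_\theta^{\beta}\partial_z^{\gamma}$. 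The key structural observation, exactly as before, is that conjugation by the flow of $X_\alpha$ (equivalently, the commutator $[\mathcal{L}_{X_\alpha},\cdot]=[X_\alpha,\cdot]$ acting on operators) sends a pure $\partial_\theta$-derivative to zero and sends each $\partial/\partial z_j$ to a combination of $\partial/\partial\theta_k$'s with coefficients depending only on $z$; dually it moves things in a controlled direction. Hence one can filter the space of differential operators of order $\leq r$ by the total number of $\partial_z$ factors (and, within that, one may refine by whether the coefficient sits against a $d\theta$ or $dz$ in the fully tensorial picture of the remark after Theorem \ref{thm:TorusPreservesStructure4}), and this filtration is stable under $[X_\alpha,\cdot]$.

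Next, using that the Liouville torus action commutes with every $X_\alpha$ (and, in the completely irrational case, with $X_1$), I would average $\Lambda$ over $\bbT^p$ to get $\overline\Lambda$, which is torus-invariant by construction and still preserved by the system, and set $\hat\Lambda=\Lambda-\overline\Lambda$, an operator preserved by the system whose coefficient functions all have zero mean on each Liouville torus. Then I would run the downward induction on the filtration degree $s$: assuming $\hat\Lambda$ lies in the $s$-th piece of the filtration, pick a top-degree monomial term $b\,\partial_\theta^{\beta}\partial_z^{\gamma}$; the bracket $[X_\alpha,\hat\Lambda]$ is forced to lie in the $(s-1)$-th piece, so its component in degree $s$ along this monomial, which is exactly $X_\alpha(b)\,\partial_\theta^{\beta}\partial_z^{\gamma}$ (all other contributions from the Leibniz/commutator expansion drop by one in filtration degree, precisely by the structural observation above), must vanish. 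Thus $X_\alpha(b)=0$ for all $\alpha$, so $b$ is constant on each Liouville torus; combined with zero mean this gives $b\equiv0$. Iterating down to the bottom of the filtration yields $\hat\Lambda=0$, i.e. $\Lambda=\overline\Lambda$ is torus-invariant.

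In the completely irrational case one replaces ``$X_\alpha(b)=0$ for all $\alpha$'' by ``$X_1(b)=0$'', and uses that under the complete irrationality hypothesis a function annihilated by $X_1$ is constant on each Liouville torus (this is the same elementary fact invoked in the proof of Theorem \ref{thm:TorusPreservesStructureIb}); the rest of the induction is identical. To reduce the general-system case to the irrational case one again passes to $X_1'=\sum c_i X_i$ with constant $c_i$ chosen so that $X_1'$ is completely irrational on $N$ and still preserves $\Lambda$, exactly as in the deduction of Theorem \ref{thm:TorusPreservesStructureI} from Theorem \ref{thm:TorusPreservesStructureIb}.

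The main obstacle I anticipate is purely bookkeeping: making the filtration genuinely stable and making the ``top term'' argument airtight when $[X_\alpha,\cdot]$ acts on a product of many $\partial/\partial z_j$ factors, where the Leibniz expansion produces several competing terms. One has to check that replacing any single $\partial/\partial z_j$ by $[X_\alpha,\partial/\partial z_j]=-\sum_k (\partial a_{\alpha k}/\partial z_j)\,\partial/\partial\theta_k$ strictly lowers the $\partial_z$-count, so that no top-degree term other than the one with coefficient $X_\alpha(b)$ survives — this is where one really uses that the $X_\alpha$ are $z$-independent in the $\theta$-directions and have no $\partial/\partial z$ component, i.e. the special Liouville form of $X_\alpha$ from Theorem \ref{thm:Liouville}. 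There is no analytic difficulty and no new idea beyond Theorem \ref{thm:TorusPreservesStructureI}; the content is that the filtration method is robust enough to survive the passage from tensors to operators, which is why the result of \cite{ZungThien-Reduction2015} can be stated here as a straightforward corollary of the same technique.
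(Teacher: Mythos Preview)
Your proposal is correct and follows exactly the approach the paper indicates: the paper does not give its own proof here but cites \cite{ZungThien-Reduction2015} and states that the result is proved there ``using the same method of filtration'' as in Theorem~\ref{thm:TorusPreservesStructureI}. Your sketch --- filtering differential operators by the number of $\partial_z$ factors, averaging to form $\hat\Lambda$, and running the downward induction using that $[X_\alpha,\partial_{z_j}]$ is a combination of $\partial_{\theta_k}$'s while $[X_\alpha,\partial_{\theta_i}]=0$ --- is precisely that filtration argument transplanted to operators, so there is nothing to add.
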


\section{Action-angle variables on manifolds with a differential 2-form}
\label{section:AA2Form}

In this section, we will give a very simple proof 
of the Arnold--Liouville--Mineur theorem about the existence of 
action-angle variables near an invariant torus of
a Hamiltonian system which is integrable in the classical sense 
of Liouville. In fact, we will obtain action-angle variables 
in a more general setting, for systems on manifolds with a differential 
2-form which is not necessarily closed or nondenegerate, and the 
Arnold--Liouville--Mineur theorem is just a particular case of this
more general result.

Consider a manifold $M$ together with an arbitrary given 
differential 2-form $\omega$, which is not necessarily closed nor
nondegenerate. We will say that a vector field $X$ on $(M,\omega)$ 
is a \textbf{\textit{Hamiltonian vector field}} of a function $H$
with respect to $\omega$ if 
it satisfies the following two equations:
\begin{equation} \label{eqn:Hamilton1}
X \lrcorner \omega = - dH
\end{equation}
and
\begin{equation}\label{eqn:Hamilton2}
\cL_{X} \omega = 0. 
\end{equation}

When $d\omega = 0$ then Equality \eqref{eqn:Hamilton2} can be omitted 
because it is a consequence of Equality \eqref{eqn:Hamilton1}. When
$d\omega \neq 0$ then, under the assumption that  Equality \eqref{eqn:Hamilton1} holds,  Equality \eqref{eqn:Hamilton2} 
is equivalent to the fact that $X$ lies in the kernel of $d\omega$:
\begin{equation}\label{eqn:Hamilton3}
X \lrcorner d \omega = 0. 
\end{equation}

\begin{definition} \label{defn:IntegrableHam2Form}
We will say that an integrable system $(X_1,\hdots, X_p, F_1,\hdots, F_q)$
on a manifold $(M,\omega)$, where $\omega$ is a given differential 2-form on $M$, is an \textbf{\textit{integrable Hamiltonian system}} of type $(p,q)$ if there are $p$ functions $H_1, \hdots, H_p$ on $(M,\omega)$ such that $X_i$ is a Hamiltonian vector field of $H_i$ with respect to $\omega$
for all $i=1,\hdots, p$.
\end{definition}

It follows immediately from Definition \ref{defn:IntegrableHam2Form} that if we have a proper integrable system $(X_1,\hdots, X_p, F_1,\hdots, F_q)$ 
which is Hamiltonian on $(M,\omega)$ with the aid
of Hamiltonian functions $H_1,\hdots, H_p$, then these functions  
$H_1,\hdots, H_p$ are common first integrals of the system, i.e.
$X_i(H_j) = 0$ for all $i,j =1,\hdots,p$. Indeed, $X_i(H_j) = \omega(X_i,X_j)$ 
is invariant with respect to the vector fields of 
the system, hence this function is constant on each Liouville torus.
Moreover, the mean value of $X_i(H_j)$ on each Liouville torus (with respect to the Liouville $\mathbb{T}^p$-action) is zero, because
$$ X_i(H_j) = \sum_k a_{jk}(z)\dfrac{\partial H_j}{\partial \theta_k}$$
in a Liouville coordinate system,
which implies that the integration of $X_i(H_j)$ over a Liouville torus with
respect to the invariant volume form $d\theta_1 \wedge \hdots \wedge d\theta_p$
is zero. Hence $X_i(H_j)$ is identically zero.

The equality $X_i(H_j) = \omega(X_i,X_j) = 0$ also means that, for proper integrable 
Hamiltonian systems on $(M,\omega)$ (where the 2-form $\omega$ may be non-closed and 
degenerate), the Liouville tori are isotropic with respect to $\omega$. As an immediate consequence, we get the following inequalities:
\begin{equation} \label{eqn:pq}
p \leq \dim M - \dfrac{\rank \omega}{2}, \quad q \geq \dfrac{\rank \omega}{2}.
\end{equation}
In particular, if $\omega$ is nondegenerate then $\rank \omega = \dim M$ and
$p \leq \dfrac{\dim M}{2}.$

\begin{proposition} 
\label{prop:LiouvilleHamiltonianAction0}
If $(X_1,\hdots, X_p, F_1,\hdots, F_q)$ is a proper
integrable Hamiltonian system on $(M,\omega)$, where $\omega$ is an
arbitrary differential 2-form, then the Liouville $\bbT^p$-action 
in the neighborhood of any Liouville torus is a Hamiltonian action
with respect to $\omega$, i.e. the components of this action are 
generated by Hamiltonian vector fields.
\end{proposition}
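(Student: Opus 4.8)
The plan is to exhibit each infinitesimal generator of the Liouville $\bbT^p$-action as a Hamiltonian vector field with respect to $\omega$. The point is that the fundamental conservation property already forces $\omega$ to be invariant under the action, after which producing the Hamiltonian functions is a short de Rham cohomology computation.

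First I would fix a Liouville coordinate system $(\theta_1,\ldots,\theta_p,z_1,\ldots,z_q)$ on a tubular neighborhood $\cU(N)\cong\bbT^p\times B^q$ as in Theorem~\ref{thm:Liouville}, so that the Liouville torus action is generated by the coordinate vector fields $\partial/\partial\theta_k$, and shrink $\cU(N)$ if necessary so that $X_1,\ldots,X_p$ are everywhere linearly independent. Since each $X_i$ is a Hamiltonian vector field we have $\cL_{X_i}\omega=0$, so Theorem~\ref{thm:TorusPreservesStructureI}, applied to the covariant $2$-tensor $\omega$, gives $\cL_{\partial/\partial\theta_k}\omega=0$ for $k=1,\ldots,p$. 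By Cartan's formula this is equivalent to $d(\iota_{\partial/\partial\theta_k}\omega)=-\iota_{\partial/\partial\theta_k}d\omega$, so the two things left to prove are that $\iota_{\partial/\partial\theta_k}d\omega=0$ (so that $\iota_{\partial/\partial\theta_k}\omega$ is closed) and that this closed $1$-form is exact on $\cU(N)$.

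For closedness, \eqref{eqn:Hamilton3} applied to each Hamiltonian vector field $X_i$ gives $\iota_{X_i}d\omega=0$. At every point of $\cU(N)$ the $X_i$ span the tangent space of the Liouville torus through that point, which is also spanned by the $\partial/\partial\theta_j$; writing $\partial/\partial\theta_k$ pointwise as a linear combination of the $X_i$ then yields $\iota_{\partial/\partial\theta_k}d\omega=0$. For exactness, since $H_1(\cU(N))\cong H_1(\bbT^p)\cong\bbZ^p$ is generated by the classes of the $\theta_j$-circles $\gamma_j$, a closed $1$-form on $\cU(N)$ is exact iff all its periods over the $\gamma_j$ vanish; and $\int_{\gamma_j}\iota_{\partial/\partial\theta_k}\omega=\int_0^1\omega(\partial/\partial\theta_k,\partial/\partial\theta_j)\,d\theta_j=0$ by the $\omega$-isotropy of the Liouville tori already established in the text ($\omega(X_i,X_l)=-X_l(H_i)=0$, hence $\omega(\partial/\partial\theta_k,\partial/\partial\theta_j)=0$ after expanding the coordinate fields in the $X_i$).

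Thus $\iota_{\partial/\partial\theta_k}\omega=-d\cH_k$ for some smooth function $\cH_k$ on $\cU(N)$ (which one can even take to depend on $z_1,\ldots,z_q$ only, i.e.\ to be pulled back from $B^q$). Together with $\cL_{\partial/\partial\theta_k}\omega=0$, this is exactly the statement that $\partial/\partial\theta_k$ is a Hamiltonian vector field of $\cH_k$ with respect to $\omega$ in the sense of \eqref{eqn:Hamilton1}--\eqref{eqn:Hamilton2}, so the Liouville $\bbT^p$-action is Hamiltonian. I expect the only step that is not purely formal to be the vanishing of the periods: closedness of $\iota_{\partial/\partial\theta_k}\omega$ alone would not suffice on $\bbT^p\times B^q$, and it is precisely the relation $X_i(H_j)=0$ (isotropy of the Liouville tori) that removes the potential obstruction, everything else being a direct application of Theorem~\ref{thm:TorusPreservesStructureI} and Cartan's formula.
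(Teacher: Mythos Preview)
Your proof is correct and follows essentially the same route as the paper's: use the fundamental conservation property (Theorem~\ref{thm:TorusPreservesStructureI}) to get $\cL_{Z_k}\omega=0$, use $X_i\lrcorner d\omega=0$ and the fact that the $Z_k$ are pointwise combinations of the $X_i$ to get $Z_k\lrcorner d\omega=0$, hence $Z_k\lrcorner\omega$ is closed, and finally argue exactness from isotropy of the Liouville tori. The only cosmetic difference is that the paper disposes of exactness in one clause (``its pull-back to a Liouville torus is zero''), while you spell out the equivalent period computation over the generating $\theta_j$-circles; both amount to the same observation that $H^1(\cU(N))$ is detected by the torus fibers and the restriction of $Z_k\lrcorner\omega$ to each fiber vanishes.
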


\begin{proof}
Let $Z_1,\hdots, Z_p$ be the generators of the Liouville $\bbT^p$-action
near some Liouville torus $N$. 
Since $X_i \lrcorner d\omega = 0$  $\forall\ i$, we also have 
$Z_i \lrcorner d\omega = 0$ $\forall\ i$. Together with 
$\cL_{Z_i} \omega = 0$ (according to the fundamental conservation
property), it implies that $d (Z_i \lrcorner \omega) = 0,$ i.e.,
$Z_i \lrcorner \omega$ is closed in a neighborhood $\cU(N)$ of any Liouville
torus $N$, hence it is exact (because its pull-back to a Liouville torus is zero), therefore there exists a $\bbT^p$-invariant function $\mu_i$ 
such that $Z_i \lrcorner \omega = - d\mu_i$. Thus, 
the Liouville $\bbT^p$-action is Hamiltonian with momentum map $(\mu_1, \hdots, \mu_p).$ 
\end{proof}

Let $(\tilde{\theta_1},\hdots \tilde{\theta_p},z_1,\hdots,z_q)$ 
be a Liouville coordinate system on $\cU(N)$ such that 
$\dfrac{\partial}{\partial \tilde{\theta_i}} = Z_i$. The 2-form
$$\beta = \omega - \sum_{i=1}^p d\mu_i \wedge d\tilde{\theta_i}$$
is $\bbT^p$-invariant, and $Z_i \lrcorner \beta = 0$ $\forall i=1,\hdots, p$, hence
$\beta$ is a basic 2-form with respect to the $\bbT^p$-action, i.e., we can write $\beta$ as  (the pull-back of) a 2-form on the space of Liouville tori:
$$\beta = b_{ij}(z) dz_i \wedge dz_j .$$ 
(There is no $\tilde{\theta_i}$ in the expression).
Thus we get the following normal form, similar to the one obtained by Fass\`o and
Sansonetto  in 2007 \cite{FassoSansonetto-AA2007}:
\begin{equation} \label{eqn:NF2Form1} 
\omega = \sum_{i=1}^p d\mu_i(z) \wedge d\tilde{\theta_i} + \sum_{1\leq i<j \leq q} b_{ij}(z) dz_i \wedge dz_j. 
\end{equation}

In the above normal form, the coordinates $(\mu_1,\tilde{\theta_1},\hdots, \mu_p, \tilde{\theta_p})$
may be viewed as a kind of \textit{partial action-angle variables}. Remark that, 
when $\omega$ is degenerate, the functions $\mu_1,\hdots, \mu_p$ are not functionally 
independent in general (see the subsection about the presymplectic situation below). 

When $\omega$ is nondegenerate (it is called an \textit{almost-symplectic} form in 
this case) then the linear independence of $Z_1,\hdots, Z_p$ implies the functional independence
of $\mu_1,\hdots,\mu_p$, and so we may choose $z_1 = \mu_1,\hdots, z_p = \mu_p$, and
the normal form \eqref{eqn:NF2Form1} becomes

\begin{equation} \label{eqn:NF2Form2} 
\omega = \sum_{i=1}^p d z_i \wedge d\tilde{\theta_i} + \sum_{1\leq i<j \leq q} b_{ij}(z) dz_i \wedge dz_j. 
\end{equation}

In this nondegenerate (but not necessarily closed) case, we may call
$(z_1,\tilde{\theta_1},\hdots, z_p, \tilde{\theta_p})$ (generalized partial) 
action-angle variables, and  the coordinates $(z_{p+1},\hdots, z_q)$
(if there are any, i.e. if $q > p$) are additional variables. The 2-form
$\beta = \sum_{1\leq i<j \leq q} b_{ij}(z) dz_i \wedge dz_j$ is a kind of \textit{\textbf{magnetic term}} 
(which is not necessarily closed). 

Under some additional conditions on the system or on the 2-form $\omega$, starting from
the normal forms \eqref{eqn:NF2Form1} and \eqref{eqn:NF2Form2} 
we will get more refined normal forms. 

\subsection{Action-angle variables for Liouville-integrable systems on (almost) symplectic manifolds}

When $p = q = \dim M / 2$ and the 2-form $\omega$ is nondegenerate, the normal form 
\eqref{eqn:NF2Form2} reads
\begin{equation} \label{eqn:NF2Form3} 
\omega = \sum_{i=1}^n d z_i \wedge d\tilde{\theta_i} + 
\sum_{1\leq i<j \leq n} b_{ij}(z) dz_i \wedge dz_j,
\end{equation}
in a coordinate system of \textit{generalized action-angle variables} 
$(z_1,\tilde{\theta_1},\hdots, z_n, \tilde{\theta_n})$ on $M^{2n}$. (We say ``generalized''
because there is still the magnetic term 
$\beta =\sum_{1\leq i<j \leq n} b_{ij}(z) dz_i \wedge dz_j$). The Liouville torus action
is generated by $(\dfrac{\partial}{\partial \tilde{\theta_1}},\hdots, 
\dfrac{\partial}{\partial \tilde{\theta_n}})$ in this
coordinate system.

If, moreover, $\omega$ is closed (i.e. it is really a symplectic form), then the
magnetic term $\beta = \sum_{1\leq i<j \leq q} b_{ij}(z) dz_i \wedge dz_j$ in \eqref{eqn:NF2Form3}
is also closed, $d\beta = 0$, hence locally exact by Poincaré's lemma, i.e., we can write
$\beta = d (\sum a_i(z) dz_i)$. It follows that
$\omega = \sum dz_i \wedge d\tilde{\theta_i} - \sum dz_i \wedge d a_i(z)$,
i.e., we have:
\begin{equation} \label{eqn:NF2Form4}
\omega = \sum_{i=1}^n dz_i \wedge d \theta_i,
\end{equation}
where $\theta_i = \tilde{\theta_i} - a_i$. 

It means that $(z_1,\theta_1,\hdots,z_n,\theta_n)$ is a system of action-angle variables 
(in a neighborhood $\cU(N)$ of an arbitrary given Liouvile torus $N$), 
and the first integrals $F_1,\hdots, F_n$ of the system in $\cU(N)$ depend only on the
variables $(z_1,\hdots,z_n)$ because $X_i(F_j) = 0$ $\forall\ i, j$. 
We have completed the proof of the classical 
Arnold--Liouville--Mineur theorem. \label{ALM-Proof}

\begin{remark} 
We may compare the present proof of existence of action-angle variables 
in the symplectic case with the one given in \cite{AKN-1987} and other books. 
There, given a Liouville system of coordinates, an analysis through Poisson brackets of  the way to make standard
the symplectic form $\omega$ 
leads to looking for an appropriate coordinate change of the form
$(z, \tilde{\theta}) \mapsto (\mu(z), \tilde{\theta}-a(z))$ (our notations). 
The action functions $\mu_i$ are usually defined via the integral formula 
\eqref{eqn:MineurIntegral}, and then one shows that the Hamiltonian vector fields
of these action functions are equal to $\dfrac{\partial}{\partial \theta_i}$ 
by using intricate computations (which work but it's still somewhat of a mistery why they work) and/or clever geometric arguments (e.g., identifying the universal covering of a tubular neighborhood of the Liouville torus $N$ with
the cotangent bundle of a local section to the torus fibration, and 
identifying the preimages of the section in this covering with the 
graphs of closed 1-forms over the section in the cotangent bundle picture, 
using general results from symplectic geometry).  
In the present proof, everything is natural: both the actions and
the angles become a natural byproduct of the Hamiltonianity of the Liouville 
torus action: $\mu$ is its momentum map
while the rotation $\tilde{\theta} \to \theta = \tilde{\theta} − a(z)$, 
which makes the section $\{\theta = const.\}$ of the Liouville torus fibration Lagrangian (when $\omega$ is closed) 
is the contribution of the magnetic term  which appears in the normal form 
for the most general 2-form $\omega_i$. 
\end{remark}

\subsection{Super-integrable systems on symplectic manifolds} 

Consider now the case when $\omega$ is a symplectic form and the 
integrable system $(X_1,\hdots, X_p, F_1,\hdots, F_q)$ 
is Hamiltonian on $(M,\omega)$, but with $p < n = \dfrac{1}{2} \dim M$
and $q = 2n - p > n$.
(According to \eqref{eqn:pq}, we cannot have $p > n$). 
Such systems are often called \textit{\textbf{super-integrable}} 
in the literature, 
because there are more first integrals than in the Liouville-integrable case. 

A particularly important class of super-integrable
systems are the so-called \textit{non-commutatively integrable systems} 
introduced by
Fomenko and Mischenko \cite{MF-Noncommutative1978} in 1978, which also appeared
in a book by Abraham and Marsden \cite[Exercise 5.2I]{AbMa1978} 
at around the same time. 

Action-angle variables for super-integrable Hamiltonian systems were studied by Nekhoroshev
\cite{Nekhoroshev-AA1972}. His main result says that, in this case, the symplectic
form $\omega$ has the following normal form:
\begin{equation} \label{eqn:NF2Form5}
\omega = \sum_{i=1}^n d\mu_i \wedge d \theta_i + \sum_{i=1}^{n-p} dx_i \wedge dy_i
\end{equation}
in a coordinate system $(\theta_1 \pmod 1, \mu_1,\hdots, \theta_p \pmod 1, \mu_p, x_1, y_1,\hdots,
x_{n-p}, y_{n-p})$. 

Formula \eqref{eqn:NF2Form5} is of course a particular case of Formula \ref{eqn:NF2Form2}.
In order to obtain Formula \eqref{eqn:NF2Form5}, one chooses a coisotropic section $S$ 
to the Liouville torus fibration in $\cU(N)$ and choose the coordinates $\theta_i$ such that
$Z  = \dfrac{\partial}{\partial \theta_i}$ and $\theta_i = 0$ on $S$. Then
$\beta = \omega - \sum_{i=1}^n d\mu_i \wedge d \theta_i$ is $\mathbb{T}^p$-invariant and its
pull-back to $S$ is a closed basic 2-form with respect to the isotropic foliation on $S$.
(This isotropic foliation is generated by the Hamiltonian vector fields 
of $\theta_1,\hdots, \theta_p$). On the quotient space of $S$ by the isotropic foliation, $\beta$
becomes nondegenerate, and hence can be written as $\beta = \sum_{i=1}^{n-p} dx_i \wedge dy_i$
by Darboux's theorem. These coordinates $(x_i, y_i)$ can be pulled back to $S$ and then extended
to $\cU(N)$ in a $\mathbb{T}^p$-invariant way, and we get a coordinates system
$(\theta_1 \pmod 1, \mu_1,\hdots, \theta_1 \pmod p, \mu_p, x_1, y_1,\hdots,
x_{n-p}, y_{n-p})$ in which $\omega$ is given by Formula \eqref{eqn:NF2Form5}.

\subsection{Presymplectic action-angle variables} 

Consider now the case when $\omega$ is a presymplectic structure
(i.e., $d\omega = 0$) of constant rank, and $p$ is maximal possible,
i.e. 
\begin{equation}
p = \dim M - \dfrac{1}{2} \rank \omega \; \text{and} \; 
q = \dfrac{1}{2} \rank \omega. 
\end{equation}

Such a situation can happen quite often in practice. For example, starting
from a Liouville-integrable Hamiltonian system on a symplectic manifold
$M^{2n}$, one fixes some regular values of some first integrals $F_1,\hdots, F_d$. Then one gets a presymplectic manifold 
\begin{equation}
\hat{M} = \{x\in M \mid F_1(x) = c_1,\hdots, F_d(x) = c_d\},
\end{equation}
whose presymplectic form $\omega$ (which is the pull-back of the
symplectic form from $M$) has rank $2n-d$,
and the restricted integrable Hamiltonian 
system on it with $p =n, q = n-d$.

By the same arguments as in the Liouville-integrable case on symplectic
manifolds, one sees that in this regular presymplectic case, the presymplectic form still has the form
\begin{equation}
\omega = \sum_{i=1}^p d\mu_i \wedge d \theta_i
\end{equation}
in a \textbf{\textit{over-determined}} \textit{action-angle coordinate system}  $$(\theta_1 \pmod 1, \mu_1,\hdots, \theta_p \pmod 1, \mu_p)$$
on $\cU(N) \cong \mathbb{T}^p \times B^q$:
the functions $\mu_1,\hdots,\mu_p$ are functionally dependent on $B^q$
and together they form a local embedding from the $q$-dimensional ball
$B^q$ to $\mathbb{R}^p$.

Recall that, in the Liouville-integrable symplectic case, the action functions are uniquely determined by the system up to an integral affine
transformation, and they equip the (regular part of the) \textit{base space}, i.e., the space of Liouville tori, with an \textit{integral affine structure}, see, e.g., \cite{DazordDelzant-AA1987,
Duistermaat-AA1980,Zung-Integrable2003}. Similarly, in the presymplectic case, the action functions are also uniquely determined by the system up 
to an integral affine transformation. However, since they are 
over-determined (i.e. functionally dependent) coordinates, they do not
equip the base space (i.e., the space of Liouville tori) of the system 
with an integral affine structure in the usual sense, but rather with what
we will call an \textit{integral co-affine structure}. To be more precise,
let us make the following definition.

\begin{defn}
A \textbf{co-affine chart} of order $p$ on a manifold $Q$ is a chart on 
a ball  $B \subset Q$ together with an injective map $\cA: B \to \bbR^p$.  An \textbf{(integral) co-affine structure} of order $p$ on a manifold $Q$
is an atlas $Q = \cup_i B_i$ of affine charts 
$(B_i \subset Q, \cA_i: B_i \to \bbR^p)$ such that
for any two chart $B_i$ and $B_j$ there is an 
(integral) affine transformation $T_{ij} : \bbR^p \to \bbR^p$
such that $\cA_j = T_{ij} \circ \cA_i$ on the intersection $B_i \cap B_j.$
\end{defn}

\begin{cor} Let $N$ be a Liouville torus of an integrable Hamiltonian system of type $(p,q)$ on a presymplectic manifold $(M,\omega)$ of constant rank
$2q$ ($q < p)$. Then the base space (i.e., space of Liouville tori) of the system in a tubular neighborhood $\cU(N)$ of $N$ is naturally equipped 
with an integral  co-affine structure induced by the system.
\end{cor}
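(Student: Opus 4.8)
The plan is to verify that the over-determined action-angle coordinate systems produced by the presymplectic normal form define an atlas with the transition maps required by the definition of an integral co-affine structure. The key point established earlier in the excerpt is that in a tubular neighborhood $\cU(N) \cong \mathbb{T}^p \times B^q$ of a Liouville torus, the presymplectic form takes the shape $\omega = \sum_{i=1}^p d\mu_i \wedge d\theta_i$, where the $\mu_i$ are $\mathbb{T}^p$-invariant (hence descend to functions on $B^q$) and together define a local embedding $\mathcal{A} = (\mu_1,\hdots,\mu_p): B^q \hookrightarrow \mathbb{R}^p$. Each such $\mathcal{A}$ is a co-affine chart of order $p$ on the base space $Q$ (the space of Liouville tori), so the collection of these charts, one around each regular Liouville torus, covers $Q$; the content of the corollary is that the transition maps between overlapping charts are integral affine maps of $\mathbb{R}^p$.

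First I would pin down the uniqueness/rigidity of the data on a fixed neighborhood $\cU(N)$. By Proposition \ref{prop:LiouvilleHamiltonianAction0}, the Liouville $\mathbb{T}^p$-action is Hamiltonian, and the $\mu_i$ are components of a momentum map for the generators $Z_i = \partial/\partial\tilde\theta_i$. Since $\omega$ is $\mathbb{T}^p$-invariant and the $Z_i$ are uniquely determined by the system up to an automorphism of $\mathbb{T}^p$ (Theorem \ref{thm:Liouville}), changing the generating basis $Z_1,\hdots,Z_p$ by a matrix $A \in GL(p,\mathbb{Z})$ replaces $(\mu_1,\hdots,\mu_p)$ by $(A^{-1})^T(\mu_1,\hdots,\mu_p)$; and each $\mu_i$ is determined by $Z_i \lrcorner \omega = -d\mu_i$ only up to an additive constant. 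Hence on a single $\cU(N)$ the map $\mathcal{A}$ is well-defined up to an integral affine transformation of $\mathbb{R}^p$ — exactly the ambiguity allowed by a co-affine chart.

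Next I would treat an overlap $\cU(N_1) \cap \cU(N_2)$, assumed connected (shrinking if necessary). On the overlap we have two such embeddings $\mathcal{A}_1, \mathcal{A}_2: U \to \mathbb{R}^p$, each coming from a Hamiltonian structure for the Liouville torus action. Because the Liouville torus action is intrinsically attached to the system (unique up to $\mathrm{Aut}(\mathbb{T}^p)$), the two angle systems differ by an element of $GL(p,\mathbb{Z})$ composed with a base-dependent rotation, and correspondingly the two momentum maps $\mathcal{A}_1, \mathcal{A}_2$ differ by that same $GL(p,\mathbb{Z})$ matrix plus a locally constant vector: $\mathcal{A}_2 = M\,\mathcal{A}_1 + c$ with $M \in GL(p,\mathbb{Z})$, $c \in \mathbb{R}^p$. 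The cleanest way to see the matrix is integral (not merely real) is to note that the lattice of periods of the Liouville torus action is canonical, so the change-of-basis matrix between two sets of generators lies in $GL(p,\mathbb{Z})$; alternatively one can observe that $d\mathcal{A}_2$ and $d\mathcal{A}_1$ are related by the transpose of this period-lattice change matrix via the identities $Z_i \lrcorner \omega = -d\mu_i$. Setting $T_{12}(v) = Mv + c$ gives $\mathcal{A}_2 = T_{12}\circ\mathcal{A}_1$ on the overlap, with $T_{12}$ an integral affine transformation, which is precisely the compatibility condition in the definition of an integral co-affine structure.

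The main obstacle I expect is \emph{the integrality of the transition matrix} $M$: showing $M \in GL(p,\mathbb{Z})$ rather than just $GL(p,\mathbb{R})$. This is where the argument must genuinely use that the $Z_i$ generate a \emph{torus} action (periodic flows of period $1$), not merely a free $\mathbb{R}^p$-action preserving $\omega$; equivalently, one must identify the period lattice with the integral structure and check that any two choices of lattice basis differ by an integral unimodular matrix. Once this is in hand, the remaining verifications — connectedness of overlaps after shrinking, the cocycle condition $T_{13} = T_{23}\circ T_{12}$, and the fact that the $\mu_i$ are genuinely $\mathbb{T}^p$-basic so that $\mathcal{A}$ really is a function on the base $Q$ — are routine, following verbatim the classical Liouville-integrable symplectic argument that produces an integral affine structure, with the sole modification that here $p > q$ so $\mathcal{A}$ is an embedding of a lower-dimensional ball rather than a diffeomorphism onto an open subset of $\mathbb{R}^p$.
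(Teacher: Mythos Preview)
Your proposal is correct and follows essentially the same reasoning as the paper. The paper does not give a formal proof of this corollary; it simply states it as an immediate consequence of the preceding discussion, which observes that ``the action functions are also uniquely determined by the system up to an integral affine transformation'' --- precisely the rigidity statement you spell out (generators of the Liouville torus action are unique up to $GL(p,\mathbb{Z})$, and each $\mu_i$ is determined by $Z_i \lrcorner \omega = -d\mu_i$ up to an additive constant). Your write-up is more detailed than what the paper provides, but the argument is the same.
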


We observe that, similarly to Riemannian structures, 
co-affine structures have a lot of local invariants. 
In particular, one can  talk about the local convexity, the 
curvature of a co-affine structure, and so on.

\section{Action-angle variables on contact manifolds}
\label{section:AAContact}

One of the research problems posed by V.I. Arnold in 1995, 
as listed in the book \textit{Arnold's Problems} \cite{Arnold-Problems2004}, 
was to extend the theory of integrable Hamiltonian systems, in particular the theorem 
on action-angle variables, to contact manifolds. 

In fact, before Arnold posed his problem, several authors,
including Libermann \cite{Libermann-Legendrian1991} in 1991
and Banyaga and Molino \cite{BaMo-Contact1992} in 1992, 
already started the study of integrable systems on contact 
manifolds. Later on many other authors also worked on this
and related problems, see, e.g., Lerman \cite{Lerman-ContactToric2002},
Webster \cite{Webster-Contact2003}, Miranda \cite{Miranda-Thesis2003},
Khesin and Tabachnikov \cite{KhesinTaba-ContactIntegrable2010},
Boyer \cite{Boyer-ContactIntegrable2011},
Jovanovic and Jovanovic \cite{Jovanovic_AAContact2011,JoJo-Contact2015}, etc.

In this section we will explain how to extend the classical
theorem on action-angle variables to contact manifolds. But let 
us first recall the notion of Hamiltonian systems on contact manifolds,
and introduce a natural notion of contact integrable 
systems.

\subsection{Hamiltonian systems on contact manifolds}

Recall that a \textbf{\textit{contact structure}} on a manifold $M$ of dimension $2n+1$ is a regular corank-1 tangent distribution $\xi$ on $M$ such that locally 
(in the neighborhood of every point) 
there is a differential 1-form $\alpha$ such that $\xi = \ker \alpha$ is
the kernel distribution of $\alpha$, and which satisfies the following  nondegeneracy condition:
\begin{equation}
\alpha \wedge (d\alpha)^n \neq 0
\end{equation}
everywhere, which means that $d\alpha$ is nondegenerate on the
distribution $\xi = \ker \alpha$. Such an 1-form
$\alpha$ is called a \textbf{\textit{contact 1-form}}. 

If one multiplies a contact 1-form by an arbitrary non-vanishing function then one gets 
another contact 1-form for the same contact structure. Still, it may happen that
a contact structure does not admit a global contact 1-form, due to orientation problems.

Suppose now that we have a global contact 1-form $\alpha$.
The unique vector field $Z$ such that 
$Z \lrcorner d\alpha = 0$ 
and $\langle \alpha, Z \rangle = 1$ is called the \textbf{\textit{Reeb vector field}}
of the contact form $\alpha$. This vector field is transverse to the contact
distribution $\xi = \ker \alpha$ and is structure-preserving, i.e.
$\cL_Z \alpha = 0.$

Given a function $\phi$ on $M$ which is $Z$-invariant, i.e.
$$ Z(\phi) = 0,$$
there is a unique vector field $X_\phi$
determined by the following conditions:
\begin{equation}
\langle \alpha, X_\phi \rangle = \phi
\end{equation}
(or equivalently, $\langle \alpha, X_\phi -\phi Z \rangle = 0$, i.e.
$X_\phi -\phi Z$ lies in $\xi$), and
\begin{equation}
X_\phi  \lrcorner d\alpha = (X_\phi - \phi Z) \lrcorner d\alpha = - d\phi. 
\end{equation}
The above two equations determine $X_\phi - \phi Z$, and hence $X_\phi$,
uniquely by $\phi$, because of the nondegeneracy of $\alpha$ on $\xi$.
This vector field $X_\phi$ is called the \textbf{\textit{Hamiltonian vector field}} 
of $\phi$ with respect to the contact form $\alpha$. One can check immediately that
$X_\phi$ preserves both $\phi$ and $\alpha$, similarly to Hamiltonian vector fields
on symplectic manifolds:
\begin{equation}
X_\phi(\phi) = 0 \; ; \; \cL_{X_\phi}\alpha = 0.
\end{equation}
Indeed, we have $X_\phi (\phi) = d\alpha(X_\phi, X_\phi) = 0$
and 
$\cL_{X_\phi}\alpha = {X_\phi} \lrcorner d\alpha + d \langle X_\phi, \alpha\rangle
= (-d\phi) + d\phi = 0.$

\subsection{Integrable systems on contact manifolds}

\begin{definition}
An integrable system $(X_1,\hdots,X_p, F_1,\hdots, F_q)$ on a 
contact manifold $(M,\xi)$ is called a \textbf{contact integrable system} on 
$(M,\xi)$ if the vector fields $X_1,\hdots,X_p$ preserve the contact distribution $\xi.$
\end{definition}

\begin{proposition} If $(X_1,\hdots,X_p, F_1,\hdots, F_q)$ is a contact
integrable system on a  contact manifold $(M,\xi)$ then
$$p \leq n+1 = (\dim M +1)/2$$ 
and in a neighborhood $\cU(N)$ of any Liouville torus $N$ on $M$ 
there exists  a $\bbT^p$-invariant contact 1-form 
$\alpha$ on  $\cU(N) \cong \bbT^p \times B^q$ with a Liouville
coordinate system $(\theta_i \pmod 1, z_j)$
such that 
$\xi = \ker \alpha$ and
\begin{equation}
\alpha = \sum_{1}^p a_i(z) d\theta_i + \sum_1^q b_i(z) dz_i.
\end{equation}
\end{proposition}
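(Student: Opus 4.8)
The plan is to combine the fundamental conservation property with a one-line averaging trick, and then read off the inequality from the resulting normal form of the contact $1$-form.

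First I would invoke Theorem~\ref{thm:TorusPreservesStructure4}: the contact distribution $\xi$ is a corank-$1$ subbundle of the tangent bundle $TM$, which is a natural vector bundle, and by hypothesis each $X_i$ preserves $\xi$; hence the Liouville $\bbT^p$-action on a tubular neighbourhood $\cU(N)\cong\bbT^p\times B^q$ of $N$ (furnished by Theorem~\ref{thm:Liouville}) also preserves $\xi$. Next I would upgrade this to a $\bbT^p$-invariant contact $1$-form with kernel $\xi$. Shrinking $\cU(N)$ (and, if $\xi$ fails to be co-orientable over $\cU(N)$, passing to a double covering of the system, exactly as in the proof of Theorem~\ref{thm:TorusPreservesStructure4}), pick any $1$-form $\alpha_0$ with $\ker\alpha_0=\xi$. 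Since each $g\in\bbT^p$ preserves $\xi$, the pullback $g^*\alpha_0$ again has kernel $\xi$, so $(g^*\alpha_0)_x=\lambda(g,x)\,(\alpha_0)_x$ for a smooth nowhere-vanishing scalar $\lambda$ with $\lambda(e,\cdot)\equiv 1$; connectedness of $\bbT^p$ then forces $\lambda>0$ everywhere, so the average $\alpha:=\int_{\bbT^p}g^*\alpha_0\,dg$ is $\bbT^p$-invariant, nowhere vanishing, and still satisfies $\ker\alpha=\xi$. Because the contact condition $\alpha\wedge(d\alpha)^n\neq 0$ is unchanged if $\alpha$ is multiplied by a nowhere-vanishing function, $\alpha$ is a genuine invariant contact $1$-form for $\xi$.

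Now, taking a Liouville coordinate system $(\theta_i\bmod 1,z_j)$ on $\cU(N)$ adapted to the Liouville action, so that the vector fields $\partial/\partial\theta_i$ generate the $\bbT^p$-action, I would write $\alpha=\sum_i A_i\,d\theta_i+\sum_j B_j\,dz_j$ and use $\cL_{\partial/\partial\theta_k}\alpha=0$ to conclude $\partial A_i/\partial\theta_k=\partial B_j/\partial\theta_k=0$ for all $i,j,k$; hence $A_i=a_i(z)$ and $B_j=b_j(z)$, which is precisely the asserted expression $\alpha=\sum_1^p a_i(z)\,d\theta_i+\sum_1^q b_i(z)\,dz_i$. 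Finally, the inequality $p\leq n+1$ would follow from this normal form by a degree count: since $a_i,b_j$ depend only on $z$, we have $d\alpha=\sum_i da_i\wedge d\theta_i+\sum_j db_j\wedge dz_j$ with each $da_i,db_j$ a combination of the $dz_k$, so every monomial of $d\alpha$ contains at most one factor $d\theta$, and every monomial of $\alpha\wedge(d\alpha)^n$ contains at most $n+1$ factors $d\theta$. As $\alpha\wedge(d\alpha)^n$ is a top-degree form on the $(2n+1)$-dimensional $\cU(N)$, its non-vanishing requires a monomial proportional to $d\theta_1\wedge\cdots\wedge d\theta_p\wedge dz_1\wedge\cdots\wedge dz_q$, i.e. one with exactly $p$ factors $d\theta$; comparing with the bound $n+1$ gives $p\leq n+1$.

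I expect the only genuine subtlety to be the co-orientability issue in the second step — resolved, as elsewhere in the paper, by passing to a double cover — while Theorem~\ref{thm:TorusPreservesStructure4} carries all the conceptual weight and the inequality reduces to the routine counting of $d\theta$-factors in $\alpha\wedge(d\alpha)^n$.
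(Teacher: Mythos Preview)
Your proof is correct and follows essentially the same approach as the paper: invoke Theorem~\ref{thm:TorusPreservesStructure4} to make the Liouville $\bbT^p$-action preserve $\xi$, build a $\bbT^p$-invariant contact $1$-form, read off the normal form from invariance, and derive $p\leq n+1$ by counting $d\theta$-factors in $\alpha\wedge(d\alpha)^n$. The only minor difference is in how you obtain the invariant $\alpha$: you average an arbitrary defining form over $\bbT^p$, whereas the paper defines $\alpha$ on a section to the torus fibration and extends it by the free $\bbT^p$-action; both are standard devices for producing invariant objects under a compact group action and lead to the same conclusion.
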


\begin{proof}
The fundamental conservation property (Theorem \ref{thm:TorusPreservesStructure4}) means that
the Liouville $\bbT^p$-action preserves $\xi$. Define $\alpha$ 
(such that $\xi = \ker \alpha$)
on a section to the torus fibration  and then extend it on $\cU(N)$ via
the action of $\bbT^p$, we get a  $\bbT^p$-invariant contact
1-form $\alpha$. We must have $p \leq n+1$, 
otherwise $\alpha \wedge (d\alpha)^n = 0$ because it does not contain components 
$(d\theta_1 \wedge \hdots \wedge d\theta_{p}) \wedge \hdots$
\end{proof}

\subsection{The generic case}

Generically, the contact distribution $\xi$ is transversal to the
tangent spaces of the Liouville torus $N$ (at some point, 
hence at every point of $N$, because of the $\mathbb{T}^p$-invariance):
\begin{equation}
\xi \pitchfork \left( Span(X_1,\hdots,X_p) = Span (\dfrac{\partial}{\partial \theta_1}, \hdots, \dfrac{\partial}{\partial \theta_p}) \right). 
\end{equation}

Without loss of generality, we may assume that $\xi \pitchfork 
\dfrac{\partial}{\partial \theta_p}.$ 
Dividing $\alpha$ by $\alpha(\dfrac{\partial}{\partial \theta_p})$, 
we may also assume that $\alpha(\dfrac{\partial}{\partial \theta_p}) =1$, 
and so we get 
\begin{equation} \label{eqn:alpha1}
\alpha = d\theta_p + \sum_{1}^{p-1} a_i(z) d\theta_i + \sum_1^q b_i(z) dz_i.
\end{equation}

Notice that in this case $\dfrac{\partial}{\partial \theta_p}  = Z_\alpha$ 
is the \textit{Reeb vector field} of $\alpha$:
$$\dfrac{\partial}{\partial \theta_p} \lrcorner \alpha = 1,\ \ 
\dfrac{\partial}{\partial \theta_p} \lrcorner d\alpha = 
\cL_{\frac{\partial}{\partial \theta_p}}
\alpha - d (\dfrac{\partial}{\partial \theta_p} \lrcorner \alpha) = 0.$$

By forgetting about the coordinate $\theta_p$, i.e. by a natural projection
from $\cU(N)$ to $\bbT^{p-1} \times B^q = \cU(N)/\bbT^1_p$, 
we get a Hamiltonian action of $\bbT^{p-1}$ 
generated by $(\dfrac{\partial}{\partial \theta_1}, \hdots, 
\dfrac{\partial}{\partial \theta_{p-1}})$ 
and momentum map
$(a_1,\hdots,a_{p-1})$ with respect to the symplectic form $d\alpha$.  

Recall that the functions $a_i$ in Formula \eqref{eqn:alpha1}
depend only on the variables $z$, and in particular they are constant
on the Liouville torus $N$. Denote by $c_i = a_i(N)$ their values
on $N$ (so the $c_i$ are constants), and let
$$(\theta_1 \pmod 1,\hdots,\theta_{p-1} \pmod 1, 
z_1,\hdots, z_{p-1}, x_1, y_1, \hdots, x_r, y_r)$$ 
be action-angle variables for this Hamiltonian torus action
(after a change of variables $\theta_1, \hdots, \theta_{p-1}, z_1, \hdots, z_{p-1}$ if necessary), where
$z_i = a_i - c_i$  and $r = (q+1-p)/2$. We can write
$$d\alpha = \sum_1^{p-1} dz_i \wedge d\theta_i + \sum_1^r dx_i \wedge d y_i .$$
Hence
$$\alpha = d\theta_p + \sum_1^{p-1} (z_i+c_i) d\theta_i + \sum_1^r x_i dy_i + \gamma,$$
where $\gamma$ is a closed 1-form which is basic 
with respect to the $\bbT^p$-action.

Put $\gamma = df,$ where $f$ is constant on each Liouville torus, 
and put $\theta_0 = \theta_p + f,$
we obtain the following normal form for $\alpha$:
\begin{equation}
\alpha = d\theta_0 + \sum_1^{p-1} (z_i+c_i) d\theta_i + \sum_1^r x_i dy_i .
\end{equation}

Let $Y = \sum_0^{p-1} f_i(x,y,z) \dfrac{\partial}{\partial \theta_i}$ 
be a vector field which is constant on the tori and which preserves  $\xi$. 
Then $\cL_Y \alpha = g.\alpha$ where $g$ is a $\bbT^p$-invariant function.
Moreover, we have $g = (g.\alpha)(\dfrac{\partial}{\partial \theta_0})
= (\cL_Y \alpha)(\dfrac{\partial}{\partial \theta_0}) = 
\cL_Y(\alpha(\dfrac{\partial}{\partial \theta_0})) = Y(1) = 0$, i.e. 
$Y$ preserves $\alpha.$ Denote $\phi = \alpha(Y).$
Then $\alpha(Y - \phi Z_\alpha) = 0$ and
$(Y - \phi Z_\alpha ) \lrcorner d\alpha = - d\phi,$ 
where $Z_\alpha = \dfrac{\partial}{\partial \theta_0}$ 
is the  Reeb vector field of $\alpha$. 

By definition, $Y$ is the \textit{Hamiltonian vector field} 
of $\phi$ with respect to the contact 1-form $\alpha.$ 
In particular, the vector fields $X_1,\hdots,X_p$ from the beginning
are Hamiltonian with respect to $\alpha$. Thus we have proved the following theorem:

\begin{theorem}[Action-angle variables on contact manifolds]
Let $(X_1,\hdots,X_p,F_1,\hdots,F_q)$ be a contact integrable system on a 
contact manifold $(M,\xi)$ with a Liouville torus $N \cong \bbT^p$ transverse 
to $\xi.$ Then there is a contact 1-form
$\alpha,$ $\ker \alpha = \xi,$ defined in a neighborhood of $N$,
which is invariant with respect to 
the Liouville $\bbT^p$-action and such that the Reeb vector field $Z_\alpha$ of $\alpha$ is one of the generators of this
 $\bbT^p$-action. 
 Moreover, all the vector fields $X_1,\hdots,X_p$ are Hamiltonian with respect to $\alpha$, 
and there is a Liouville coordinate system 
\begin{equation}
(\theta_0 \pmod 1, \hdots, \theta_{p-1} \pmod 1, z_1, \hdots, 
z_{p-1}, x_1, \hdots, x_r, y_1, \hdots, y_r) 
\end{equation}
in a neighborhood $\cU(N) \cong \bbT^p \times B^q$ of $N$ 
($r = (q+1-p)/2$, $x_i,y_i, z_j$ are 0 on $N$) in which the vector fields 
$X_i$ are constant on the Liouville tori and
\begin{equation}
\alpha = d\theta_0 + \sum_1^{p-1} (z_i+c_i) d\theta_i + \sum_1^r x_i dy_i,
\end{equation}
where $c_1, \hdots, c_{p-1}$ are constants. 
\end{theorem}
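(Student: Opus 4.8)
The plan is to combine Liouville's theorem (Theorem~\ref{thm:Liouville}), the fundamental conservation property for subbundles (Theorem~\ref{thm:TorusPreservesStructure4}), and the classical action-angle theorem (proved earlier in this section, ending at page~\pageref{ALM-Proof}) applied to a reduced symplectic quotient. First I would invoke Theorem~\ref{thm:Liouville} to get a Liouville coordinate system $(\theta_1 \bmod 1,\hdots,\theta_p \bmod 1, z_1,\hdots, z_q)$ on $\cU(N)\cong \bbT^p\times B^q$ in which the $X_i$ are constant on the Liouville tori, and generators $\partial/\partial\theta_i$ of the Liouville $\bbT^p$-action. Since the $X_i$ preserve $\xi$, Theorem~\ref{thm:TorusPreservesStructure4} tells us the Liouville $\bbT^p$-action preserves $\xi$ as well; averaging a defining 1-form (chosen on a section and spread out by the action) then produces a $\bbT^p$-invariant contact 1-form $\alpha$ with $\ker\alpha=\xi$, so that $\alpha=\sum_i a_i(z)\,d\theta_i + \sum_j b_i(z)\,dz_i$ with coefficients depending only on $z$ (this is the Proposition already stated just above).

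Next I would use the transversality hypothesis: since $\xi\pitchfork N$ and $N=\bbT^p\times\{0\}$ carries the span of the $\partial/\partial\theta_i$, and since everything is $\bbT^p$-invariant, some $\partial/\partial\theta_i$ — say after relabeling $\partial/\partial\theta_p$ — is transverse to $\xi$ everywhere on $\cU(N)$ (shrinking $\cU(N)$ if necessary). Rescaling $\alpha$ by the nowhere-zero invariant function $\alpha(\partial/\partial\theta_p)$, I may assume $\alpha(\partial/\partial\theta_p)=1$, which forces the normal form~\eqref{eqn:alpha1}, $\alpha=d\theta_p+\sum_1^{p-1}a_i(z)\,d\theta_i+\sum_1^q b_i(z)\,dz_i$. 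A direct computation (using $\cL_{\partial/\partial\theta_p}\alpha=0$ by invariance and Cartan's formula) shows $\partial/\partial\theta_p\lrcorner d\alpha=0$ and $\alpha(\partial/\partial\theta_p)=1$, i.e. $\partial/\partial\theta_p$ is the Reeb vector field $Z_\alpha$ of $\alpha$; since it is one of the Liouville generators, the second assertion of the theorem holds.

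Then I would reduce: projecting along the Reeb $\bbT^1$-orbit to $\bbT^{p-1}\times B^q = \cU(N)/\bbT^1_p$, the form $d\alpha$ descends to a symplectic form on which $\partial/\partial\theta_1,\hdots,\partial/\partial\theta_{p-1}$ generate a Hamiltonian $\bbT^{p-1}$-action with momentum map $(a_1,\hdots,a_{p-1})$. Applying the Nekhoroshev-type normal form obtained earlier in this section (Formula~\eqref{eqn:NF2Form5}) to this reduced Hamiltonian torus action produces action-angle variables $(\theta_1,\hdots,\theta_{p-1}, z_1,\hdots,z_{p-1}, x_1,y_1,\hdots,x_r,y_r)$ with $r=(q+1-p)/2$, $z_i=a_i-c_i$, $c_i=a_i(N)$, in which $d\alpha=\sum_1^{p-1}dz_i\wedge d\theta_i+\sum_1^r dx_i\wedge dy_i$. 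Pulling this back, $\alpha=d\theta_p+\sum_1^{p-1}(z_i+c_i)\,d\theta_i+\sum_1^r x_i\,dy_i+\gamma$ for some closed $\bbT^p$-basic 1-form $\gamma$; writing $\gamma=df$ with $f$ constant on each Liouville torus and setting $\theta_0=\theta_p+f$ absorbs $\gamma$ and yields the claimed normal form for $\alpha$. Finally, for the Hamiltonianity of the $X_i$: each $X_i$ is $\bbT^p$-invariant (Liouville coordinates) and constant on the tori and preserves $\xi$, so $\cL_{X_i}\alpha=g_i\alpha$ with $g_i$ invariant; evaluating on $\partial/\partial\theta_0=Z_\alpha$ and using $\alpha(X_i)$ constant along $Z_\alpha$ gives $g_i=X_i(1)=0$, hence $\cL_{X_i}\alpha=0$, and with $\phi_i=\alpha(X_i)$ one checks $\alpha(X_i-\phi_i Z_\alpha)=0$ and $(X_i-\phi_i Z_\alpha)\lrcorner d\alpha=-d\phi_i$, i.e. $X_i=X_{\phi_i}$ is the contact Hamiltonian vector field of $\phi_i$.

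The main obstacle is the global step that turns the local exactness of $\gamma$ (and the local existence of action-angle coordinates on the reduced space) into a genuine normal form over the whole tubular neighborhood $\cU(N)$: one must make sure the reduced space $\cU(N)/\bbT^1_p$ is an honest symplectic manifold (the $\bbT^1$-action generated by the Reeb field is free here, which is what transversality buys us) and that the basic closed 1-form $\gamma$, whose cohomology class a priori could be nonzero on $\bbT^{p-1}$, is actually exact — this works because $\gamma$ is basic with respect to the full $\bbT^p$ and pulls back to zero on each Liouville torus, so its class lives on the contractible base $B^q$ and vanishes. Care must also be taken that shrinking $\cU(N)$ to ensure transversality of $\partial/\partial\theta_p$ does not destroy the product structure $\bbT^p\times B^q$, which it does not since transversality of a torus-invariant distribution at $N$ propagates to a neighborhood. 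All the remaining computations (Reeb identification, contact-Hamiltonian check) are short applications of Cartan's formula and the invariance already established.
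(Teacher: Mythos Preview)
Your proposal is correct and follows essentially the same route as the paper's own proof: invariant contact form via Theorem~\ref{thm:TorusPreservesStructure4}, normalization so that one Liouville generator is the Reeb field, reduction by that Reeb $\bbT^1$ to a symplectic $\bbT^{p-1}$-action, Nekhoroshev-type action-angle coordinates (Formula~\eqref{eqn:NF2Form5}) on the quotient, absorption of the exact basic remainder $\gamma$ into $\theta_0$, and the $g_i=0$ computation to conclude Hamiltonianity of the $X_i$. Your discussion of the exactness of $\gamma$ and the well-definedness of the quotient is in fact more explicit than the paper's, which simply writes $\gamma=df$ without comment.
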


\begin{remark}
The above theorem is an improvement of the results by Banayaga and Molino 
\cite{BaMo-Contact1992} (the case with $p=n+1, q=n$) and by Jovanovic \cite{Jovanovic_AAContact2011}
(who required more conditions and obtained a less precise normal form).
\end{remark}

In particular, in the case $p=n+1$ (maximal possible) and $q=n$, 
the normal form becomes
\begin{equation}
\alpha = d\theta_0 + \sum_1^{n} (z_i+c_i) d\theta_i
\end{equation}
in a coordinate system $(\theta_0 \pmod 1, \hdots, \theta_n \pmod 1, z_1, \hdots, z_n),$
and the vector fields $X_i$ are of the type (with $\phi_i = \alpha(X_i)$)
\begin{equation}
X_i = Y_{\phi_i} = (\phi_i - \sum_{j=1}^n (z_j+c_j)\dfrac{\partial \phi_i}{\partial z_j}) \dfrac{\partial}{\partial \theta_0} +  \sum_{j=1}^n \dfrac{\partial \phi_i}{\partial z_j} \dfrac{\partial}{\partial \theta_j}.
\end{equation}

\subsection{The non-transversal case} 

As far as we know, the non-transversal case of contact integrable Hamiltonian
systems had not been treated by any author in the literature.

In this case, the Liouville torus $N \cong \bbT^p$ is isotropic, 
i.e. tangent to $\xi$. Therefore, $p \leq n = (\dim M - 1)/2,$ and the Reeb vector field 
$Z_\alpha$ of $\alpha$ is not tangent to $N$. The contact form 
$\alpha$ is preserved by $p+1$ vector fields $Z_\alpha, 
\dfrac{\partial}{\partial \theta_1},\hdots, 
\dfrac{\partial}{\partial \theta_p}$ (and also by the $X_i$). 

Proceeding as in the transversal case, but with $Z_\alpha$ instead of the last 
generator $\dfrac{\partial}{\partial \theta_p}$ of the Liouville $\bbT^p$-action, we get the following
normal form  (with $r = n-p = (q-1-p)/2$):
\begin{equation}
\alpha = dz_0 + \sum_1^p z_i d\theta_i + \sum_1^r x_i dy_i.
\end{equation}

In particular, if $p=n$ (maximal possible) then we have
\begin{equation}
\alpha = dz_0 + \sum_1^n z_i d\theta_i.
\end{equation}

The vector fields $X_i$ are still Hamiltonian, with Hamiltonian functions  $\phi_i = \alpha(X_i):$
\begin{equation}
X_i = Y_{\phi_i} = (\phi_i - \sum_{j=1}^n z_j\dfrac{\partial \phi_i}{\partial z_j}) 
\dfrac{\partial}{\partial z_0} +  \sum_{j=1}^n \dfrac{\partial \phi_i}{\partial z_j} \dfrac{\partial}{\partial  \theta_j}.
\end{equation}

On the other hand, $X_i$ does not contain $\dfrac{\partial}{\partial z_0}$,
i.e. we must have 
$$\phi_i - \sum_{j=1}^n z_j\dfrac{\partial \phi_i}{\partial z_j} = 0,$$
which means that each function $\phi_i$ is homogeneous of degree 1 in the 
variables $z_1,\hdots, z_n$. So these functions $\phi_i$ are linear 
(if they are smooth), 
the quantities $c_{ij} = \dfrac{\partial \phi_i}{\partial z_j}$ are constants,
and the vector fields $X_i = \sum c_{ij} \dfrac{\partial}{\partial  \theta_j}$ are constant vector fields in our normalized coordinate
system (which is quite surprising). 

\section{Action-angle variables on Dirac and Poisson manifolds}
\label{section:AADirac}

\subsection{Dirac manifolds and their submanifolds}

In this subsection, let us briefly recall some basic notions about Dirac manifolds 
and Hamiltonian systems on them 
(see, e.g., \cite{Bursztyn-Dirac2010,Courant-Dirac1990, Kosmann-Dirac2011},
Appendix A8 of \cite{DufourZung-PoissonBook}, and references therein). 
We will also write down some basic results about (co-)Lagrangian submanifolds of Dirac manifolds, which are similar to Weinstein's results
on Lagrangian submanifolds of symplectic manifolds \cite{Weinstein-Symplectic1977},
and which are related to action-angle variables. 

Dirac structures were  first used by Gelfand and Dorfman 
(see, e.g., \cite{GelfandDorfman-Hamiltonian1979,Dorfman-Dirac1987})
in the study of integrable systems, and were formalized by Weinstein and 
Courant in \cite{CourantWeinstein-Dirac1988,Courant-Dirac1990}
in terms of involutive isotropic subbundles of the ``big'' bundle $TM \oplus T^*M$. 
They generalize both (pre)symplectic and Poisson structures, and prove to be a 
convenient setting for dealing with  systems with constraints and reduction problems.

On the direct sum $TM \oplus T^*M$ of the tangent and the cotangent 
bundles of a smooth $n$-dimensional manifold $M$ there is a natural 
fiber-wise indefinite 
symmetric scalar product of signature $(n,n)$ defined by the formula
\begin{equation}
 \langle (X_1,\alpha_1), (X_2,\alpha_2)\rangle = \frac{1}{2}(\langle \alpha_1,X_2\rangle + \langle \alpha_2,X_1\rangle )
\end{equation}
for sections $(X_1,\alpha_1), (X_2,\alpha_2) \in \Gamma (TM \oplus T^*M)$. A vector subbundle 
$\cD \subset TM \oplus T^*M$ is called \textbf{\textit{isotropic}} 
if the restriction of the indefinite scalar product to it is identically zero. 
On the space of smooth sections of $TM \oplus T^*M$ there is an operation, called the
\textbf{\textit{Courant bracket}}, defined by the formula
\begin{equation}
[(X_1,\alpha_1), (X_2,\alpha_2)] := ([X_1,X_2], \cL_{X_1}\alpha_2 - 
{X_2} \lrcorner d\alpha_1),
\end{equation}
where $\cL$ denotes the Lie derivative. 
A subbundle $\cD \subset TM \oplus T^*M$ is said to be \textbf{\textit{closed}} 
under the Courant
bracket if the bracket of any two sections of $\cD$ is again a section of $\cD.$

\begin{defn} A \textbf{\textit{Dirac structure}} on a $n$-dimensional manifold $M$ 
is an isotropic vector subbundle $\cD$ of rank $n$ of
$TM \oplus T^*M$ which is closed under the Courant bracket. If $\cD$ is a Dirac 
structure on $M$
then the couple $(M, \cD)$ is called a {\textbf{\textit{Dirac manifold}}}.
\end{defn}

We will denote the two natural projections $TM \oplus T^*M \to TM$ and 
$TM \oplus T^*M \to T^*M$ by $proj_{TM}$
and $proj_{T^*M}$ respectively. We will also identify $TM$ and $T^*M$
with the subbundles $TM \oplus 0$ and $0 \oplus T^*M$
in $TM \oplus T^*M$.

Grosso modo, a Dirac structure $\cD$ on a manifold $M$ 
is nothing but a singular foliation of $M$ by presymplectic leaves:
the singular \textbf{\textit{characteristic distribution}}  
$\cC = proj_{TM} \cD$ of $\cD$ 
is integrable in the sense of Frobenius-Stefan-Sussmann due to the closedness condition.
On each leaf $S$ of the associated singular \textbf{\textit{characteristic foliation}} 
whose tangent distribution is $\cC$ 
there is an induced differential 2-form $\omega_S$  defined by the formula
\begin{equation}
 \omega_S (X,Y) = \langle \alpha_X, Y \rangle,
\end{equation}
where $X,Y \in \cC_x = T_xS$ and $\alpha_X$ is any element of $T^*_xM$ such that $(X,\alpha_X) \in \cD_x.$ (The pairing $\langle \alpha_X, Y \rangle$ does not depend on the choice of $\alpha_X$
as long as $(X,\alpha_X) \in \cD_x$: if $\alpha'_X$ is another choice then
$\alpha - \alpha' \in \cD_x \cap T_x^*M$, which implies that 
$\langle \alpha_X - \alpha'_X, Y \rangle = 0$ due to the isotropy of $\cD$.
The skew symmetry of $\omega$ is also due to the isotropy of $\cD$:
$ \omega_S (X,Y) +  \omega_S (Y,X) = \langle \alpha_X, Y \rangle +
\langle \alpha_Y, X \rangle = 2 \langle (X,\alpha_X), (Y,\alpha_Y) \rangle = 0$).
Due to the closedness of $\cD,$ the 2-form $\omega_S$ is also closed, i.e.
$(S,\omega_S)$ is a presymplectic manifold. The Dirac structure $\cD$ is uniquely 
determined by its
characteristic foliation and the presymplectic forms on the leaves.

If  $proj_{TM}: \cD \to TM$ is bijective then the characteristic foliation consists 
of just 1 leaf, i.e. $M$ itself,
and $\cD$ is simply (the graph of) a presymplectic structure $\omega$ on $M$: 
$\cD = \{(X, X \lrcorner \omega) \ | \  X \in TM\}.$  On the other hand, if 
$proj_{TM}: \cD \to T^*M$ 
is bijective then $\cD$ is (the graph of) a Poisson structure on $M$, and the 
2-forms $\omega_S$ are nondegenerate,
i.e. symplectic. However, in general, the ranks of the maps 
$proj_{T^M}: \cD \to TM$ and $proj_{T^M}: \cD \to TM^*$ may
be smaller than $n$, and may vary from point to point.

\begin{defn}
A Dirac structure $\cD$ on $M$ is called a \textbf{regular Dirac structure} 
of \textbf{bi-corank $(r,s)$} if there are two 
nonnegative integers $r,s$ such that $\forall x \in M$ we have
\begin{equation}
\dim (\cD_x \cap T_xM) = n - \dim proj_{T^*M}\cD_x = r
\end{equation}
and
\begin{equation}
\dim (\cD_x \cap T^*_xM) = n - \dim proj_{TM}\cD_x  = s.
\end{equation}
\end{defn}

Even if the Dirac structure $\cD$ is non-regular, one can still talk about its 
bi-corank, defined to be the bi-corank of a
generic point in $M$ with respect to $\cD.$ For regular Dirac structures, 
we have the following analog of Darboux's theorem, whose proof is essentially the
same as the proof of the classical local Darboux normal form for symplectic structures.

\begin{prop}[Darboux for regular Dirac] \label{thm:Darboux}
 Let $O$ be an arbitrary point of a $n$-manifold $M$ with a regular 
 Dirac structure $\cD$ of bi-corank $(r,s)$.
Then $n - r - s = 2m$ for some $m \in \bbZ_+ $, and  there is a local coordinate
system $(x_1, \hdots, x_{2m}, y_1,\hdots,y_r,z_1, \hdots, z_s)$  in a neighborhood of $O$, 
such that the local characteristic foliation is of codimension $s$ and given by the local 
leaves 
\begin{equation}
 \{z_1 = const, \hdots, z_s = const\},
\end{equation}
and on each of these local leaves $S$ the presymplectic form $\omega_S$ is given by the 
formula
\begin{equation}
\omega_S = \sum_{i=1}^m dx_{2i-1} \wedge dx_{2i}.
\end{equation}
\end{prop}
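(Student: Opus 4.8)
The plan is to follow the three-step template suggested right after the statement: straighten the characteristic foliation by Frobenius, identify the kernel of the leafwise $2$-form in order to get both the parity of $n-r-s$ and an involutive complement, and then run a Moser-type Darboux argument carried out with the transverse variables as parameters. First I would normalize the characteristic foliation: since $\cD$ is regular of bi-corank $(r,s)$, the characteristic distribution $\cC = proj_{TM}\cD$ has constant rank $n-s$ and is involutive (Frobenius--Stefan--Sussmann, as recalled before the statement), so the classical Frobenius theorem gives coordinates $(u_1,\dots,u_{n-s},z_1,\dots,z_s)$ near $O$ in which the local leaves are exactly the slices $\{z_1 = \mathrm{const},\dots,z_s = \mathrm{const}\}$. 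This already yields the codimension-$s$ assertion, and in these coordinates the family $\{\omega_S\}$ becomes a $z$-dependent family of closed $2$-forms on a fixed $(n-s)$-dimensional ball.

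Next I would pin down $\ker\omega_S$ and deduce the parity. Using the isotropy of $\cD$ one has $\cD_x\cap T^*_xM \subseteq \mathrm{Ann}(\cC_x)$, and since both have dimension $s$ they coincide. Hence if $(X,\alpha_X)\in\cD_x$ then $X\in\ker\omega_{S,x}$ iff $\alpha_X\in\mathrm{Ann}(\cC_x)=\cD_x\cap T^*_xM$, in which case $(X,0)=(X,\alpha_X)-(0,\alpha_X)\in\cD_x$; conversely $X\in\cD_x\cap T_xM$ clearly lies in $\ker\omega_{S,x}$. So $\ker\omega_{S,x}=\cD_x\cap T_xM$, which by regularity has constant dimension $r$. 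Therefore $\omega_S$ has constant rank $(n-s)-r$, so $n-r-s=2m$ with $m\in\bbZ_+$, and the kernel of $\omega_S$ is a rank-$r$ involutive subdistribution of $\cC$ (involutivity from closedness of $\omega_S$ plus constant rank). A second application of Frobenius, refining the previous chart inside the leaves while keeping the $z$-slicing, gives coordinates $(x_1,\dots,x_{2m},y_1,\dots,y_r,z_1,\dots,z_s)$ in which $\ker\omega_S=\mathrm{span}(\partial_{y_1},\dots,\partial_{y_r})$; then $\omega_S$ has no $dy_i$ occurring and coefficients independent of the $y_i$, i.e. for each fixed $z$ it descends to a genuine symplectic form $\omega^{(z)}$ on the $x$-ball, depending smoothly on $z$.

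Finally I would invoke Darboux with parameters. Interpolating linearly between $\omega^{(z)}$ and its constant-coefficient value at $O$, solving the homotopy equation $X_t\lrcorner\omega_t = -\sigma_t$ for the time-dependent vector field, and integrating its flow produces a family of diffeomorphisms depending smoothly on $(z,t)$ that simultaneously carries every $\omega^{(z)}$ to $\sum_{i=1}^{m}dx_{2i-1}\wedge dx_{2i}$. Composing the $x$-coordinates with this family (leaving $y$ and $z$ untouched) gives a chart in which the characteristic foliation is $\{z=\mathrm{const}\}$ and $\omega_S=\sum_{i=1}^{m}dx_{2i-1}\wedge dx_{2i}$ on each leaf, as required.

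The main obstacle is exactly this last step: making every construction depend smoothly on the transverse parameter $z$ — in particular straightening $\ker\omega_S$ jointly with the leaves, and upgrading the classical symplectic Darboux/Moser argument to a foliated version. Everything else is routine bookkeeping with the regularity hypotheses; the key point making it work is that the solution of the homotopy equation and the integration of its flow are manifestly smooth in $z$, so the normalizing diffeomorphism automatically preserves the foliation $\{z=\mathrm{const}\}$.
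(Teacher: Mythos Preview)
Your proposal is correct and matches what the paper intends: the paper does not give a detailed proof of this proposition, only remarking that it ``is essentially the same as the proof of the classical local Darboux normal form for symplectic structures.'' Your three-step outline (Frobenius for the characteristic foliation, identification of $\ker\omega_S$ with $\cD\cap TM$ and a second Frobenius for this kernel, then a Moser--Darboux argument with the transverse $z$ as parameters) is exactly the natural way to flesh out that remark, and all the intermediate claims you make---in particular $\cD\cap T^*M=\mathrm{Ann}(\cC)$, $\ker\omega_S=\cD\cap TM$, and the involutivity of the latter as a distribution on $M$ via the Courant bracket---are correct.
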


In particular, if $\cD$ is regular, then 
the \textbf{\textit{kernel distribution}} given by the kernels of the 
presymplectic forms is regular and integrable, and gives rise
to a foliation called the \textbf{\textit{kernel foliation}} of $\cD$. 
In local canonical coordinates given by Proposition \ref{thm:Darboux},
the kernel distribution is spanned by 
$(\dfrac{\partial}{\partial y_1}, \hdots, \dfrac{\partial}{\partial y_r}).$

\begin{example}
 Given a manifold $L$, a regular foliation $\cF$ on $L$, 
 and a vector bundle $V$ over $L$, put $M = T^*\cF \oplus V$, where
$T^*\cF$ means the cotangent bundle of the foliation $\cF$ over $L$. 
Then $M$ admits the following regular Dirac structure $\cD$,
which will be called the \textbf{\textit{canonical Dirac structure}}: 
each leaf $S$ of the characteristic foliation is of the type
 $S = T^*N \oplus V_N = \pi^{-1} (N)$, where $N$ is a leaf of $\cF$ and 
 $\pi: T^*\cF \oplus V \to L$ is the projection map,
and the presymplectic form on $S = T^*N \oplus V_N$ is the pull-back of the 
standard symplectic form on the cotangent bundle
$T^*N$ via the projection map $T^*N \oplus V_N \to T^*N.$ 
When $\cF$ consists of just one leaf $L$ and $V$ is trivial then
this canonical Dirac structure is the same as the (graph of the) 
standard symplectic structure on $T^*L.$
\end{example}

The notions of Hamiltonian vector fields  and Hamiltonian group actions 
can be naturally extended from the symplectic and
Poisson context to the Dirac context. In particular, we have:

\begin{defn}
 A vector field $X$ on a Dirac manifold $(M,\cD)$ is called a 
 \textbf{Hamiltonian vector field} if 
there is a function $H$, called a \textbf{Hamiltonian function} of $X$, such that 
one of the following two equivalent conditions is satisfied:  \\
i) $(X,dH)$ is a section of $\cD$:
\begin{equation}
 (X, dH) \in \Gamma(\cD).
\end{equation}
ii) $X$ is tangent to the characteristic distribution and 
\begin{equation}
 X \lrcorner \omega_S = - d(H|_S)
\end{equation}
on every presymplectic leaf $(S,\omega_S)$ of it.
\end{defn}

\begin{prop}
 If $X$ is a Hamiltonian vector field of a Hamiltonian function $H$
on a Dirac manifold $(M,\cD),$ then $X$ preserves the Dirac structure $\cD,$ 
the  function $H$, and every leaf of the characteristic foliation.
\end{prop}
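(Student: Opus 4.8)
The plan is to handle the three assertions separately, in order of increasing difficulty, all of them flowing from the defining condition that $(X,dH)$ is a section of $\cD$.

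First, the two ``free'' parts. That $X$ is tangent to the characteristic distribution $\cC=proj_{TM}\cD$ is immediate, since $X=proj_{TM}(X,dH)$; and then preservation of each leaf of the characteristic foliation follows because a (local) flow of a vector field tangent to an integrable (possibly singular) distribution carries each leaf into itself, using here the integrability of $\cC$ in the Stefan-Sussmann sense rather than the ordinary Frobenius theorem. That $X(H)=0$ is the isotropy of $\cD$ evaluated on the section $(X,dH)$: by the formula for the indefinite pairing, $\langle(X,dH),(X,dH)\rangle=\langle dH,X\rangle=X(H)$, which vanishes since $\cD$ is isotropic.

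The substantive point is that $X$ preserves $\cD$. The key remark is that for a pair $(X,\alpha)$ with $d\alpha=0$, the operator $[(X,\alpha),\,\cdot\,]$ given by the Courant bracket coincides, on sections of $TM\oplus T^*M$, with the Lie derivative along $X$ lifted to the ``big'' bundle, namely $(Y,\beta)\mapsto(\cL_X Y,\cL_X\beta)=([X,Y],\cL_X\beta)$. Indeed, from the defining formula of the Courant bracket,
\[
[(X,dH),(Y,\beta)]=\bigl([X,Y],\ \cL_X\beta - Y\lrcorner d(dH)\bigr)=\bigl([X,Y],\ \cL_X\beta\bigr),
\]
using $d(dH)=0$. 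Since $(X,dH)\in\Gamma(\cD)$ and $\cD$ is closed under the Courant bracket, the right-hand side lies in $\Gamma(\cD)$ for every section $(Y,\beta)\in\Gamma(\cD)$; hence the lifted Lie derivative $\cL_X$ maps $\Gamma(\cD)$ into itself. Because $\cD$ has constant rank $n$, this infinitesimal invariance integrates to invariance of $\cD$ under the local flow of $X$, which is the meaning of ``$X$ preserves $\cD$''. As a bonus, $X$ then automatically preserves the presymplectic form $\omega_S$ on each characteristic leaf $S$, although this is not part of the statement.

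I expect the only genuine obstacle to be bookkeeping: matching the sign conventions in the identity $[(X,dH),(Y,\beta)]=(\cL_X Y,\cL_X\beta)$ to the conventions fixed earlier in the paper, and making rigorous the passage from ``$\cL_X$ stabilizes $\Gamma(\cD)$'' to ``the flow of $X$ stabilizes $\cD$'' --- this is routine for a constant-rank subbundle, carried out locally so as to sidestep any incompleteness of $X$.
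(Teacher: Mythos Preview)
Your proof is correct. The paper states this proposition without proof, treating it as a standard fact about Dirac manifolds (it is essentially in Courant's original paper \cite{Courant-Dirac1990}, to which the paper refers). Your argument is the natural one: the key observation that $[(X,dH),(Y,\beta)] = ([X,Y],\cL_X\beta)$ when the cotangent component is closed, so that closedness of $\cD$ under the Courant bracket becomes exactly the statement that $\cL_X$ stabilizes $\Gamma(\cD)$, is the standard route and matches the conventions fixed in the paper. The minor point about the factor $\tfrac{1}{2}$ in the pairing washes out since you are pairing $(X,dH)$ with itself.
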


\begin{defn} A function $f$ on $(M,\cD)$ is called a \textbf{\textit{Casimir function}} 
if $f$ is a Hamiltonian function of the trivial vector field,
i.e. $(0,df) \in \Gamma(\cD).$ A vector field $X$ on $(M,\cD)$ is called an 
\textbf{\textit{isotropic vector field}} if it is Hamiltonian with respect
to the trivial function, i.e. $(X,d0) \in \Gamma(\cD),$ or equivalently, 
$X$ lies in the kernel of the induced presymplectic forms. 
\end{defn}

Notice that if $X$ is a Hamiltonian vector field of Hamiltonian function $H$, 
$Y$ is an isotropic vector field, and $f$ is a Casimir function, then $X+Y$ 
is also a Hamiltonian vector field of $H$, and $X$ is also a Hamiltonian vector field 
of $H + f$. Modulo isotropic vector fields and Casimir functions, 
the correspondence between Hamiltonian vector fields and
Hamiltonian functions will become bijective.

Remark also that, unlike the Poisson case, not every function on a general 
Dirac manifold can be a Hamiltonian function for some  Hamiltonian vector field. 
A necessary (and essentially sufficient) condition for a function $H$ to be a Hamiltonian
function is that the differential of $H$ must annul the kernels of the induced 
presymplectic forms.

Another interesting feature of general Dirac structures  is that it is easier 
for a dynamical system
to become Hamiltonian with respect to a Dirac structure than with respect to a 
symplectic or  Poisson structure, as the following example shows:

\begin{example} (See \cite{ZungMinh_2D2012}). A local 2-dimensional integrable 
vector field with a hyperbolic singularity
$$X = h(x,y) (\frac{x}{a} \frac{\partial}{\partial x} - \frac{y}{b} 
\frac{\partial}{\partial y}),$$ 
where $a,b$ are two coprime natural numbers  is not Hamiltonian with respect to any 
symplectic or Poisson
structure if $a+b \geq 3,$ but is Hamiltonian with respect to the 
presymplectic structure
$$\omega = x^{a-1} y^{b-1}dx \wedge dy.$$
On the other hand, a local integrable vector field 
$$X = h(y) y \frac{\partial}{\partial x}$$ 
is not Hamiltonian with respect to any presymplectic
structure, but is Hamiltonian with respect to the Poisson structure 
$$y \frac{\partial}{\partial x} \wedge \frac{\partial}{\partial y}.$$
If an integrable vector field on a surface admits both of the above singularities  
then it cannot be Hamiltonian with respect to any presymplectic or Poisson structure, 
but  may  be Hamiltonian with respect to a Dirac structure.
\end{example}

The theory of isotropic, coisotropic, and Lagrangian submanifolds can be naturally extended from the symplectic category to the Dirac
category. However, in the Dirac case, we will have to distinguish between the Lagrangian and the co-Lagrangian
submanifolds (which are the same thing in the symplectic case): A Lagrangian
submanifold will lie entirely on a presymplectic leaf of the Dirac structure and is maximally isotropic there. On the other hand, a co-Lagrangian submanifold will intersect the whole local family of presymplectic leaves, 
with each intersection being isotropic but with tangent spaces having trivial intersection with the kernels of the presymplectic form. Moreprecisely, we have:

\begin{defn} Let $(M,\cD)$ be a Dirac manifold, where $\cD$ is a regular Dirac structure 
of bi-corank $(r,s)$. \\
i)  A submanifold $N$ of  $(M, \cD)$ is called \textbf{\textit{isotropic}} if it 
lies on a characteristic leaf $S$, and the pull-back of
the presymplectic form $\omega_S$ to $N$ is trivial. 
If, moreover, $N$ is of maximal dimension possible, i.e.
\begin{equation}
 \dim N =  \frac{1}{2} \rank \omega_S  + r = \frac{1}{2}(\dim M + r -s),
\end{equation}
then $N$ is called a \textbf{Lagrangian submanifold}.
A foliation (or fibration) on $(M,\cD)$ is called \textbf{\textit{Lagrangian}} 
if its leaves (or fibers) are Lagrangian. \\
ii) A submanifold $L$ of  $(M,\cD)$ is called a \textbf{\textit{co-Lagrangian submanifold}} 
if 
\begin{equation}
\dim L = \frac{1}{2} (\dim M - r + s),
\end{equation}
and for every point $x \in L$  the tangent space $T_xL$ satisfies the following conditions : 
a) $T_xL + proj_ {TM}\cD_x = T_xM;$ b) $T_xL \cap (T_xM \cap \cD_x) = \{0\};$ 
c) $\omega_S|_{T_xL} = 0$ where $S$ is the characteristic leaf containing $x$.
\end{defn}

Observe that  if  $N$ is a Lagrangian submanifold then $T_xN$ contains the kernel of the presymplectic form at
$x$ for every $x \in N,$  this kernel distribution is regular and integrable in $N$, 
and $N$ is foliated by the kernel foliation. If $L$ is a co-Lagrangian submanifold then 
$L$ is also foliated: the foliation on $L$ is the
intersection of the characteristic foliation of $(M,\cD)$ with $L$. Moreover, if 
$N$ is a Lagrangian submanifold and $L$ is a co-Lagrangian submanifold of $(M,\cD)$, 
then
\begin{equation}
 \dim N + \dim L = \dim M.
\end{equation}

The above definition of Lagrangian and co-Lagrangian submanifolds may differ 
from the other definitions in the literature, but they are well-suited for our study of 
action-angle variables. In particular, it is easy to
see that any local Lagrangian foliation in a regular Dirac manifold admits a 
local co-Lagrangian section. We also have the following analogs of some results of Weinstein \cite{Weinstein-Symplectic1977} 
about (co-)Lagrangian submanifolds:

\begin{thm}[Co-Lagrangian embeddings] \label{thm:co-Lagrangian1}
Let $L$ be a co-Lagrangian submanifold of a regular
Dirac manifold $(M,\cD)$. Then there is a foliation $\cF$ on $L$, 
a vector bundle $V$ over $L$, and a Dirac
diffeomorphism from a neighborhood $(\cU(L), \cD)$ of $L$ to an open subset of 
$T^*\cF \oplus V$ equipped with the canonical Dirac structure, which sends 
$L$ to the zero section of $T^*\cF \oplus V.$ 
\end{thm}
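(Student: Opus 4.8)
The plan is to mimic Weinstein's proof of the Lagrangian neighborhood theorem, but working leafwise over the characteristic foliation of $(M,\cD)$ while keeping careful track of the kernel directions. First I would identify the intrinsic data: since $L$ is co-Lagrangian, condition (a) says $T_xL$ maps onto $proj_{TM}\cD_x / (\text{kernel})$, so the intersection of the characteristic foliation of $(M,\cD)$ with $L$ defines a regular foliation $\cF$ on $L$ whose codimension is $s$ (the number of Casimir directions). The vector bundle $V$ should be the "kernel bundle" restricted to $L$: at each $x\in L$, let $K_x = \ker(\omega_S)_x \subset T_xS$ be the kernel of the presymplectic form on the characteristic leaf $S$ through $x$; by Proposition \ref{thm:Darboux} (Darboux for regular Dirac) this has constant rank $r$, so $K:=\bigcup_{x\in L}K_x$ is a rank-$r$ vector bundle over $L$, and I set $V=K$. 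Note by condition (b) that $K_x\cap T_xL=\{0\}$, so $K$ is genuinely a complement direction, not tangent to $L$.

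Next I would construct the model. On $T^*\cF\oplus V$ with its canonical Dirac structure (as in the Example preceding the theorem), the zero section is co-Lagrangian: the characteristic leaves are $\pi^{-1}(N)=T^*N\oplus V_N$ for leaves $N$ of $\cF$, the presymplectic form there is the pullback of the canonical symplectic form on $T^*N$, its kernel is exactly the $V_N$ direction, and the zero section meets each leaf transversally to that kernel in the (Lagrangian) zero section of $T^*N$. So the model has precisely the same first-order data along its zero section as $(M,\cD)$ has along $L$: I would choose a bundle isomorphism covering $\mathrm{id}_L$ between a tubular neighborhood of $L$ in $M$ and a neighborhood of the zero section in $T^*\cF\oplus V$ that (i) matches the characteristic foliations, (ii) matches $\cF$ on $L$ and the kernel subbundles along $L$, and (iii) on each characteristic leaf restricts to an isomorphism of presymplectic normal data of the corresponding Lagrangian $N\subset S$ — this last part being exactly Weinstein's cotangent-bundle identification for the Lagrangian submanifold $N$ inside the presymplectic leaf $S$ (working on $S/K_S$, which is symplectic, one has an honest Lagrangian). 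This produces a diffeomorphism $\Phi_0$ agreeing to first order along $L$ with a Dirac diffeomorphism.

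Then I would run a Moser-type deformation argument, leaf by leaf but smoothly in the transverse Casimir parameters $z_1,\dots,z_s$. On each characteristic leaf, $\Phi_0^*\omega_{\mathrm{model}}$ and $\omega_S$ are two closed 2-forms agreeing along $N$ and with the same kernel; pulling down to the symplectic quotient by the kernel foliation, the standard relative Poincaré lemma gives a primitive $\sigma_t$ vanishing to appropriate order along $N$, and integrating the time-dependent vector field $V_t$ with $V_t\lrcorner\omega_t = -\sigma_t$ yields an isotopy fixing $N$ and straightening the presymplectic forms; one must check these vector fields depend smoothly on the transverse parameters and are tangent to the leaves (automatic, since $\omega_t$ lives on leaves), so they assemble into a global diffeomorphism $\Psi$ of a neighborhood of $L$. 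Composing, $\Phi=\Psi\circ\Phi_0$ preserves the characteristic foliation, the leafwise presymplectic forms, and (by the kernel-matching in $\Phi_0$ together with tangency of $V_t$) the kernel foliation; since a regular Dirac structure is determined by its characteristic foliation and leafwise presymplectic forms (stated in the excerpt), $\Phi$ is a Dirac diffeomorphism sending $L$ to the zero section.

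The main obstacle I expect is the uniformity across leaves: one must ensure that the leafwise Weinstein/Moser constructions — the choice of primitives $\sigma_t$, the relative Poincaré lemma with its order-of-vanishing control along $N$, and the resulting flows — can be made to depend smoothly (indeed, that the primitives can be chosen so the whole family of vector fields is a genuine smooth vector field on a neighborhood in $M$, not just a measurable family of leafwise vector fields). This is handled by doing everything in the Darboux coordinates of Proposition \ref{thm:Darboux}, where the $z$-dependence is explicit and the relative Poincaré lemma can be applied with parameters; the bookkeeping is routine but is the only place where the "Dirac" (as opposed to purely symplectic) nature of the problem requires genuine care. The dimension count $\dim N+\dim L=\dim M$ and the rank count $n-r-s=2m$ from Proposition \ref{thm:Darboux} guarantee that the model $T^*\cF\oplus V$ has exactly the right dimension, so no degeneracy is lost or created in the identification.
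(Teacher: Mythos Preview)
Your proposal is correct in outline but follows a genuinely different route from the paper. The paper first reduces the general regular Dirac case to the \emph{Poisson} case by choosing a transversal $\cM$ to the kernel foliation through $L$ (so $\cM$ is Poisson and $(\cU(L),\cD)$ is recovered from $\cM$ by pulling back leafwise symplectic forms along the projection $\pi_1:\cU(L)\to\cM$); it then treats the Poisson case not by Moser but by constructing a Lagrangian foliation $\cN$ transverse to $L$ and invoking the \emph{uniqueness of marked symplectic realizations}: the required diffeomorphism $\Phi$ is produced explicitly by matching Hamiltonian flows of $\cN$-invariant functions on $M$ with Hamiltonian flows of the corresponding fiberwise-constant functions on $T^*\cF$. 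Your approach instead keeps the kernel directions present throughout, builds a first-order identification $\Phi_0$, and then runs a parametrized relative-Moser argument on the presymplectic leaves, quotienting by the kernel only inside each leaf to make the Moser vector field well-defined. Both strategies work; the paper's buys a cleaner conceptual statement (the map is canonically characterized by the Lagrangian foliation and identity on $L$) and sidesteps the smooth-dependence bookkeeping you flagged, while yours is more self-contained (no appeal to symplectic realization theory) and makes the role of the Darboux coordinates of Proposition~\ref{thm:Darboux} more explicit. The one place you should tighten is the claim that the leafwise Moser flows preserve the kernel foliation globally (not just its restriction to each leaf): since your $V_t$ is tangent to characteristic leaves this is automatic for the characteristic foliation, but you should note that the resulting diffeomorphism intertwines the two kernel distributions because both are the kernels of the (now equal) leafwise 2-forms.
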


\begin{proof}
 Let us first prove the above theorem in the Poisson case: 
 $\cD = \{(\alpha\lrcorner \Pi, \alpha) \ | \ \alpha \in T^*M\}$
is the graph of a regular Poisson structure $\Pi$ on $M$. 
In this case, the bi-corank of $\cD$ is of the type $(0,s)$, and
the leaves of the characteristic foliation are symplectic. 

Denote by $\cF$ the foliation on $L$, which is the intersection of the 
characteristic foliation with $L$:
$T_x\cF = T_xN \cap \cC_xF$ for every $x \in N$, where $\cC$ 
is the characteristic distribution.  Then, via the
symplectic form on $\cC$,  the vector bundle $T^*\cF$ over $L$ is naturally 
isomophic to another vector bundle 
over $L$, whose fiber over $x \in L$ is the quotient space $\cC_x/T_x\cF.$ 
This latter bundle is also naturally 
isomorphic to the normal bundle of  $L$ in $M$. Due to these isomorphisms, 
there is a vector subbundle $E$ over $L$
of $\cC_L = \cup_{x \in L} \cC_x$, such that $\cC_L = T\cF \oplus E,$ and $E$
is Lagrangian, i.e. $E$ is isotropic with respect to the induced 
symplectic forms on $\cC$ and the rank of $E$ is half the rank of $\cC_L$.

At each point $x \in L,$ the set of germs of local Lagrangian submanifolds in 
$M$ which contain $x$ and which are tangent
to $E_x$ at $x$ is a contractible space. 
(By a local symplectomorphism from the characteristic leaf $S$ which contains  $x$
to $T^*\bbR^m$ where $2m = \rank \omega_S$, this space of germs can be identified with 
the space of germs of exact 1-forms  on $(\bbR^m,0)$ whose 1-jets vanish  at the origin). 
Due to this fact, there are no topological obstructions to the existence of a Lagrangian
foliation in a sufficiently  small neighborhood of $L$ which is tangent to $E_x$ at 
every point $x \in L$. Denote by $\cN$ such a Lagrangian foliation. 
Identify $L$ with the zero section of $T^*\cF$. Then, similarly to the proof of  
uniqueness of marked symplectic realizations of  Poisson manifolds  
(see Proposition 1.9.4 of \cite{DufourZung-PoissonBook}), 
one can show that there is a \emph{unique} Poisson isomorphism $\Phi$ from a neighborhood  
of $N$ in $M$ to a neighborhood of $L$  of in $T^*\cF$, which is identity on $L$ and which sends the leaves of $N$ to the local fibers of
$T^*\cF$. $\Phi$ can be constructed as follows: 

Take a local function $F$ in the neighborhood of a point $x \in L$
in $M$, which is invariant on the leaves of the Lagrangian foliation $\cN$. 
Push $F$ to $T^*\cF$  by identifying $L$ with the zero
section of $T^*\cF$ and by making the function invariant (i.e., constant) 
on the fibers of $T^*\cF$. Denote the obtained local
function on $T^*\cF$ by $\tilde{F}$. Now extend the map $\Phi$ from $L$  
(on which $\Phi$ is the identity map) to a 
neighborhood of $L$ by the flows of the Hamiltonian vector fields $X = X_F$ 
and $\tilde{X} = X_{\tilde{F}}$
of $F$ and $\tilde{F}$: if $y = \phi^t_X(z)$ where $z \in L$ and $\phi^t_X$ denotes the time-$t$ flow of $X$, then 
$\Phi(y) = \phi^t_{\tilde{X}} (z).$
One verifies easily that $\Phi$ is well-defined (i.e. it does not depend 
on the choice of the functions $F$), and is a required Poisson isomorphism.

Consider now the general regular Dirac case. Denote by $\cK$ the kernel foliation 
in a small tubular neighborhood $\cU(L)$
of $L$ in $M$ in this case: the tangent space of $\cK$ at each point is the 
kernel of the induced presymplectic form at that point. 
Denote by $\cM \subset \cU(L)$ a submanifold which contains $N$ 
and which is transversal to the kernel foliation.
Then $\cM$ is Poisson submanifold of $(M,\cD)$.
Denote by $\pi_1: \cU(L) \to  \cM$ the projection map (whose preimages are the 
local leaves of the kernel foliation).
 The Dirac structure $\cD$ in $\cU(L)$ is uniquely obtained from the
Poisson structure on $\cM$ by pulling back the symplectic 2-forms from the 
characteristic leaves of $\cM$ to the
characteristic leaves of   $\cU(L)$ via the projection map $\pi_1$
(so that they become presymplectic with the prescribed kernels). 
Denote by $V$ the vector bundle over $L$
which is the restriction of the kernel distribution to $L$.

According to the Poisson case of the theorem, there is a Poisson diffeomorphism 
from $\cM$ to a neighborhood of the zero
section in $T^*\cF$.  Extend $\Phi$ to an arbitrary diffeomorphism 
$\hat{\Phi}$ from $\cU(L)$
to a neighborhood of the zero section in $T^*\cF \oplus V$ which is fiber-preserving 
in the sense  that $\pi_2 \circ \hat{\Phi} = \Phi \circ \pi_1$, where $\pi_2$ denotes the projection $T^*\cF \oplus V \to T^*\cF.$
Then $\hat{\Phi}$ is a  required Dirac diffeomorphism.
\end{proof}

\begin{thm}[Co-Lagrangian sections] \label{thm:co-Lagrangian2}
Let $\cF$ be a regular foliation on a manifold $L$, and $V$ be a vector bundle over $L$. 
Then a section $K$ of the vector bundle $T^*\cF  \oplus V$ equipped with the canonical 
Dirac structure is a co-Lagrangian submanifold of $T^*\cF  \oplus V$  if and only if 
$K = (\theta, v)$, where $\theta \in \Gamma (T^*\cF)$ 
with $d_\cF \theta = 0,$ and $v \in \Gamma(V)$ is arbitrary.
\end{thm}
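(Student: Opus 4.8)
The plan is to compute directly, in a local frame adapted to the foliation $\cF$, what it means for the graph of a section $(\theta,v)$ to be isotropic and to satisfy the three tangency conditions in the definition of co-Lagrangian submanifold, and to see that these conditions collapse exactly to $d_\cF\theta=0$ (with $v$ unconstrained). First I would set up coordinates: pick a local trivialization in which $L$ has coordinates split as leaf-directions $x_1,\dots,x_m$ along $\cF$ and transverse directions $z_1,\dots,z_s$, so that $T^*\cF$ has fiber coordinates $\xi_1,\dots,\xi_m$ dual to the $\partial/\partial x_i$, and $V$ has fiber coordinates $v_1,\dots,v_k$; the canonical Dirac structure then has characteristic leaves $\{z=\text{const}\}$ with presymplectic form $\omega_S=\sum_i d\xi_i\wedge dx_i$ (the pull-back of the standard symplectic form on $T^*N$), and kernel distribution spanned by $\partial/\partial v_a$. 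A section $K$ is then the graph $\xi_i=\theta_i(x,z)$, $v_a=v_a(x,z)$.

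Next I would check the dimension count and conditions (a), (b) mostly for free: a section always has $\dim K=\dim L=m+s$, which matches $\frac12(\dim M - r + s)$ since here $\dim M=2m+s+k$, $r=k$, $s=s$; condition (b), that $T_xK$ meets $T_xM\cap\cD_x$ (the kernel of $\omega_S$, spanned by $\partial/\partial v_a$) trivially, holds because $K$ is a graph over the base directions $(x,z)$ and its tangent space projects isomorphically onto $T_xL$, hence contains no vertical $\partial/\partial v_a$; condition (a), $T_xK + proj_{TM}\cD_x = T_xM$, follows similarly by a dimension count once (b) is known, since $proj_{TM}\cD_x$ is the characteristic distribution (codimension $s$, spanned by the $\partial/\partial x_i$, $\partial/\partial\xi_i$, $\partial/\partial v_a$ directions along the leaf) and $T_xK$ supplies the missing $z$-directions. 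The only substantive condition is (c): $\omega_S|_{T_xK}=0$. Writing a tangent vector to $K$ along the leaf $\{z=\text{const}\}$ as $\partial/\partial x_i + \sum_j(\partial\theta_j/\partial x_i)\partial/\partial\xi_j + (\cdots)\partial/\partial v_a$ and pairing two such vectors against $\omega_S=\sum d\xi_j\wedge dx_j$, the vanishing of $\omega_S$ restricted to the intersection $K\cap S$ becomes exactly the symmetry $\partial\theta_j/\partial x_i=\partial\theta_i/\partial x_j$, i.e. the leafwise closedness $d_\cF\theta=0$; the $v$-components drop out of this pairing entirely since $\partial/\partial v_a$ lies in the kernel of $\omega_S$.

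Finally I would note that the converse is immediate by running the same computation backwards: given $d_\cF\theta=0$ and arbitrary $v$, the graph satisfies (a), (b), (c) and has the right dimension, hence is co-Lagrangian. The main (and really the only) point requiring care is bookkeeping in condition (c) — making sure one restricts $\omega_S$ to vectors tangent to $K$ \emph{and} tangent to the characteristic leaf (equivalently, that one only needs leafwise closedness $d_\cF\theta$ rather than full closedness $d\theta$, because the $z$-directions in $T_xK$ are irrelevant to $\omega_S$, which lives on the leaf). I do not expect any genuine obstacle; this is essentially the Dirac-geometric analog of the classical fact that the graph of a 1-form in $T^*L$ is Lagrangian iff the form is closed, refined to the foliated setting and decorated with the harmless extra factor $V$.
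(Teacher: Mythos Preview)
Your proposal is correct and is essentially the same argument as the paper's, only far more explicit: the paper's proof is the single sentence ``The proof is the same as in the symplectic case, when $V$ is trivial and $\cF$ consists of just one leaf,'' i.e.\ it appeals to the classical fact that the graph of a $1$-form in $T^*L$ is Lagrangian iff the form is closed. Your local-coordinate verification of the dimension count, conditions (a) and (b), and the reduction of condition (c) to $\partial\theta_j/\partial x_i = \partial\theta_i/\partial x_j$ (leafwise closedness) is precisely the content behind that one-liner, with the harmless $V$-factor and the foliated refinement spelled out.
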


\begin{proof}
 The proof is the same as in the symplectic case, when $V$ is trivial and $\cF$ consists of 
 just one leaf, i.e. $L$ itself.
\end{proof}

\subsection{Integrable Hamiltonian systems on Dirac manifolds}

The following natural definition is a straightforward generalization of the notion
of integrable Hamiltonian systems from the case of pre-symplectic and Poisson manifolds
to the case of general Dirac manifolds.

\begin{defn}
 An integrable system  $(X_1\hdots,X_p, F_1,\hdots, F_q)$ of type $(p,q)$ 
 on a Dirac manifold $(M,\cD)$ 
 is called an \textbf{\textit{integrable Dirac system}} if the vector fields $X_1,\hdots, X_p$
 preserve the Dirac structure $\cD$. It is called an
 \textbf{integrable Hamiltonian system}  if the vector fields $X_1,\hdots, X_p$ 
 are Hamiltonian, i.e., there are Hamiltonian functions $H_1,\hdots, H_p$ 
 such that $(X_i, dH_i)  \in \Gamma(\cD)$ for $i=1,\hdots,p.$
\end{defn} 

Of course, an integrable Hamiltonian system on a Dirac manifold is 
also an integrable Dirac system, but the converse is not true. 

\begin{prop} \label{prop:LiouvilleToriIsotropic}
Let $N$ be a Liouville torus of an integrable Hamiltonian system
$(X_1\hdots,X_p, F_1,\hdots, F_q)$ with corresponding Hamiltonian
functions $H_1,\hdots, H_p$ on a Dirac manifold $(M,\cD)$. 
Then we have: \\
i) The functions $H_1,\hdots, H_p$ are invariant on the Liouville tori in a tubular neighborhood $\cU(N)$
of $N.$ \\
ii)  The Liouville tori in $\cU(N)$ are isotropic. \\
iii) The functions $H_1,\hdots, H_p$ commute with each other in $\cU(N)$, i.e. their Poisson brackets vanish:
$\{H_i, H_j\} = 0.$
\end{prop}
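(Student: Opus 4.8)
The plan is to run, in the Dirac setting, the same argument that the paper uses for integrable Hamiltonian systems on $(M,\omega)$: there the key was the identity $X_i(H_j)=\omega(X_i,X_j)$ together with its invariance along the system and its vanishing mean over each Liouville torus. On a general Dirac manifold there is no global $2$-form to pair $X_i$ and $X_j$ against (the presymplectic forms $\omega_S$ live only on the characteristic leaves), so the first task is to produce substitutes for that identity and for its invariance using only the two defining properties of $\cD$: isotropy of $\cD$ in $TM\oplus T^*M$, and closedness of $\cD$ under the Courant bracket.

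Write $g_{ij}:=X_i(H_j)$. First I would record two algebraic facts. Since $(X_i,dH_i)$ and $(X_j,dH_j)$ are sections of the isotropic bundle $\cD$, the vanishing of $\langle(X_i,dH_i),(X_j,dH_j)\rangle$ reads $X_i(H_j)=-X_j(H_i)$. Next, using closedness of $\cD$ under the Courant bracket together with $[X_i,X_j]=0$ and $d(dH_i)=0$,
\[
 [(X_i,dH_i),(X_j,dH_j)]=\bigl(0,\,d(X_i(H_j))\bigr)\in\Gamma(\cD),
\]
so $g_{ij}$ is a Casimir function. Pairing $(0,dg_{ij})\in\Gamma(\cD)$ isotropically against each $(X_k,dH_k)\in\Gamma(\cD)$ gives $X_k(g_{ij})=0$ for all $k$; hence $g_{ij}$ is invariant under every vector field of the system, and therefore constant on each Liouville torus in $\cU(N)$ (by Theorem~\ref{thm:Liouville} these tori are the orbits of the commuting flows of $X_1,\dots,X_p$).

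Now I would eliminate the remaining freedom with a mean-value computation. Fixing a Liouville coordinate system from Theorem~\ref{thm:Liouville}, we have $X_i=\sum_k a_{ik}(z)\,\partial/\partial\theta_k$, hence $g_{ij}=\sum_k a_{ik}(z)\,\partial H_j/\partial\theta_k$. Integrating over a Liouville torus $\bbT^p$ against the invariant volume $d\theta_1\wedge\cdots\wedge d\theta_p$ yields $0$, since the $a_{ik}$ are constant on the torus and $\int_{\bbT^p}\partial H_j/\partial\theta_k\,d\theta=0$ by periodicity of $H_j$. A function that is constant on a torus and has zero mean there must vanish on it; letting the torus vary over $\cU(N)$ gives $X_i(H_j)=g_{ij}\equiv 0$ on $\cU(N)$ for all $i,j$. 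From $X_k(H_j)\equiv0$ for all $k$ we read off (i): each $H_j$ is constant along the orbits of $X_1,\dots,X_p$, i.e. on the Liouville tori. And (iii) follows at once: with the standard convention $\{H_i,H_j\}=X_i(H_j)$ (well-defined modulo isotropic vector fields, again by the isotropy pairing), $\{H_i,H_j\}=0$.

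For (ii), since each $X_k$ is Hamiltonian it is tangent to the characteristic distribution $\cC=proj_{TM}\cD$, so a Liouville torus $N_{\mathbf z}$ — an orbit of these commuting flows — lies inside a single characteristic leaf $S$. The tangent space of $N_{\mathbf z}$ is spanned by the $X_i$, and
\[
 \omega_S(X_i,X_j)=(X_i\lrcorner\omega_S)(X_j)=-d(H_i|_S)(X_j)=-X_j(H_i)=0,
\]
so the pull-back of $\omega_S$ to $N_{\mathbf z}$ is trivial, i.e. the Liouville tori are isotropic. The only step that genuinely uses the Dirac (as opposed to symplectic or Poisson) formalism — and the one I expect to require the most care — is the identification of $X_i(H_j)$ as a Casimir function via the Courant-bracket computation, since it is this identity together with isotropy that replaces the symplectic relation $X_i(H_j)=\omega(X_i,X_j)$ and its invariance; once that substitute is in place the argument is exactly the bookkeeping already carried out in Section~\ref{section:AA2Form}.
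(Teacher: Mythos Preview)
Your proof is correct and follows essentially the same route as the paper's. The paper packages the Courant-bracket step as the general statement that on a Dirac manifold the Poisson bracket $\{H_i,H_j\}=X_i(H_j)$ is a Hamiltonian function for $[X_i,X_j]$ (up to an isotropic vector field), hence a Casimir when $[X_i,X_j]=0$; you unpack this explicitly via $[(X_i,dH_i),(X_j,dH_j)]=(0,d(X_i(H_j)))$, which is the same computation. The remaining steps---invariance of $g_{ij}$ on the tori, vanishing of its mean in Liouville coordinates, and the deduction of (i)--(iii)---are identical to the paper's argument.
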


\begin{proof}
 Recall that, similarly to the case of Poisson manifolds, if $H$ and $F$ are two Hamiltonian functions on a Dirac manifold
with two corresponding Hamiltonian vector fields $X_H$ and $X_F$, 
then their Poisson bracket $\{H,F\} := X_H(F) = - X_F(H) = \omega_S(X_H,X_F)$ (where $\omega_S$ denotes the
induced presymplectic forms) is again a Hamiltonian function whose associated 
Hamiltonian vector field is equal to $[X_H,X_F]$ plus an isotropic vector field.

Since $[X_i,X_j] = 0$ (for any $i,j \leq p$) we have that $X_i(H_j) = \{H_i,H_j\}$ is a Casimir function. In particular,
$X_i(H_j)$ is invariant on the Liouville tori near $N$, because the Liouville tori belong to the characteristic leaves
(because the tangent bundle of the Liouville tori are spanned by the Hamiltonian vector fields $X_1\hdots,X_p$
which are tangent to the characteristic distribution). But the average of $X_i(H_j)$ on each Liouville torus is 0 due to the
quasi-periodic nature of the $X_i$ (Theorem \ref{thm:Liouville})
(see the paragraph after Definition \ref{defn:IntegrableHam2Form} for a more
detailed explanation, the situation here is the same), 
so $X_i(H_j) = 0$ on each Liouville torus, i.e., we have
\begin{equation}
 X_i(H_j)   = \{H_i,H_j\} = 0 \ \text{in} \ \cU(N) \ \forall i=1,\hdots,p, 
\end{equation}
which implies that $H_j$ is invariant on the Liouville tori for all $j =1,\hdots, p.$

The proof of the isotropicness of Liouville tori is absolutely similar to the
presymplectic case: it follows from the equation
$\omega_S(X_i,X_j) = \{H_i,H_j\} = 0$ and the fact that  the vector fields 
$X_1,\hdots,X_p$ span the tangent bundles of the Liouville tori.
\end{proof}

\subsection{Action functions}

We have the following result about Hamiltonianity of the Liouville torus actions
for integrable Hamiltonian systems on Dirac manifolds, similarly to the case
of systems on manifolds with a differential 2-form (Proposition \ref{prop:LiouvilleHamiltonianAction0}):

\begin{thm}[Liouville torus action is Hamiltonian]  \label{thm:LiouvilleHamiltonianAction}
Let $N$ be a Liouville torus of an integrable Hamiltonian system
$(X_1\hdots,X_p, F_1,\hdots, F_q)$  on a Dirac manifold $(M,\cD)$.  
Assume that at least one of the following two additional conditions is satisfied: \\
i) The dimension $\dim (Span_\bbR(X_1(x),\hdots,X_p(x)) \cap (T_xM \cap \cD_x))$ 
is constant in a neighborhood of $N$. \\
ii) The characteristic foliation is regular 
in a neighborhood of $N$. \\
Then the Liouville torus action of the system is a Hamiltonian torus action 
(i.e. its generators are Hamiltonian) in a neighborhood $\cU(N)$ of $N$.
\end{thm}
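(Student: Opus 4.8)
The plan is to follow the same strategy used in the presymplectic and symplectic cases (Proposition~\ref{prop:LiouvilleHamiltonianAction0}): invoke the fundamental conservation property to move the Dirac structure along with the system to the Liouville torus action, then show that each generator $Z_i$ of the action, paired appropriately, yields a closed (hence exact, by the isotropy of $N$) $1$-form which serves as a momentum component. First I would let $Z_1,\hdots,Z_p$ be the infinitesimal generators of the Liouville $\bbT^p$-action near $N$, each being a constant-coefficient combination of the $X_j$ on each Liouville torus, hence commuting with all $X_j$. By Theorem~\ref{thm:TorusPreservesStructure4} (the Dirac structure $\cD$ is a subbundle of the natural bundle $TM\oplus T^*M$ preserved by the system), the Liouville torus action preserves $\cD$; in particular each $Z_i$ preserves $\cD$, and since the tangent distribution of $N$ is spanned by the $X_j$, which are tangent to the characteristic distribution, the $Z_i$ are themselves tangent to the characteristic foliation near $N$.

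Next I would produce, for each $i$, a function $\mu_i$ on $\cU(N)$ with $(Z_i, d\mu_i)\in\Gamma(\cD)$. The natural candidate is: on each characteristic (presymplectic) leaf $S$ one has $Z_i\lrcorner\omega_S$, and I want to show this $1$-form on $S$ glues to (the restriction to each leaf of) a closed basic $1$-form that is $d$ of a globally defined $\bbT^p$-invariant function. Here the two auxiliary hypotheses enter. Under hypothesis (ii), $\cD$ is regular near $N$: I can use the co-Lagrangian section machinery — Proposition~\ref{prop:LiouvilleToriIsotropic} says the Liouville tori are isotropic, so a local Lagrangian foliation extends them, and by Theorems~\ref{thm:co-Lagrangian1} and~\ref{thm:co-Lagrangian2} there is a co-Lagrangian section $K$ to the torus fibration; the Darboux-type normal form of Proposition~\ref{thm:Darboux} then lets me write, in Liouville-compatible coordinates, $\cD$ as a canonical Dirac structure $T^*\cF\oplus V$, in which the torus action is visibly Hamiltonian with momentum components read off from the $T^*\cF$-factor. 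Under hypothesis (i), I do not have regularity, but the constancy of $\dim(\mathrm{Span}(X_1,\hdots,X_p)\cap(TM\cap\cD))$ guarantees the rank of the induced $2$-form $\omega_S$ restricted to the torus directions is locally constant, so $Z_i\lrcorner\omega_S$ still depends smoothly on the leaf and I can run the same closedness argument leafwise: $d(Z_i\lrcorner\omega_S)=\cL_{Z_i}\omega_S - Z_i\lrcorner d\omega_S = 0$ (the first term vanishes by the conservation property applied leafwise, the second since $\omega_S$ is closed on each presymplectic leaf), and isotropy of $N\subset S$ makes the pull-back to $N$ vanish, so $Z_i\lrcorner\omega_S = -d(\text{something})$ with the primitive chosen $\bbT^p$-invariant and leaf-smoothly varying, hence extending to a function $\mu_i$ on $\cU(N)$ with $(Z_i, d\mu_i)\in\Gamma(\cD)$.

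I would then check the two conditions in the definition of a Hamiltonian vector field: $Z_i$ tangent to the characteristic distribution (done above) and $Z_i\lrcorner\omega_S = -d(\mu_i|_S)$ on each leaf (the construction of $\mu_i$), concluding each $Z_i$ is Hamiltonian with Hamiltonian function $\mu_i$, so the Liouville torus action is Hamiltonian with momentum map $(\mu_1,\hdots,\mu_p)$. The main obstacle I expect is exactly the smooth-in-the-leaf-direction control of the leafwise primitives when $\cD$ is not regular: under hypothesis (i) one must verify that the locally constant rank of $\omega_S$ in the torus directions really does give a smooth family of $1$-forms $Z_i\lrcorner\omega_S$ and that choosing $\bbT^p$-averaged primitives with zero pull-back to $N$ produces a genuinely smooth function on the full neighborhood $\cU(N)$ — this is the place where a naive leafwise argument could fail, and it is presumably why the theorem needs \emph{some} extra hypothesis beyond bare integrability. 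The regular case (ii) is cleaner because the co-Lagrangian embedding theorem does all the gluing for free.
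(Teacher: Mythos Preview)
Your overall architecture is right --- use the fundamental conservation property to make the torus action preserve $\cD$, then find a Hamiltonian for each generator $Z_i$ --- but you are missing the concrete device that makes the argument go through, and your proposed substitutes in both cases (i) and (ii) have problems.

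The paper's proof does \emph{not} work with the leafwise object $Z_i\lrcorner\omega_S$ and then try to glue primitives across leaves. Instead it writes $Z_i=\sum_j r_j X_j$ with $r_j$ torus-invariant (from Liouville's theorem), and sets $\rho=\sum_j r_j\,dH_j$. The point is that $(\,Z_i,\rho\,)=\sum_j r_j(X_j,dH_j)$ is automatically a section of $\cD$, so $\rho$ is a \emph{globally smooth $1$-form on all of $\cU(N)$} whose restriction to each leaf coincides with $Z_i\lrcorner\omega_S$. This single step kills the smoothness-across-leaves problem you flagged. One then defines $\mu_i(y)=\int_{y_0}^{y}\rho$ along a path in the leaf through $y$, where $y_0$ is the intersection of that leaf with a fixed transversal disk $D$. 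Under (ii) this is well-defined because leaves meet $D$ in single points. Under (i) the leaves may meet $D$ in higher-dimensional pieces, and the trick is to use the constant-rank hypothesis to arrange $dH_1\wedge\cdots\wedge dH_{p-d}\neq 0$ and then choose $D$ so that $H_1,\ldots,H_{p-d}$ are constant on it; then $\rho|_\gamma=0$ for any curve $\gamma\subset S\cap D$, so the integral is still path-independent. You do not identify either of these two ideas.

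Your route for (ii) via the co-Lagrangian embedding theorem is circular and also misapplied: Theorems~\ref{thm:co-Lagrangian1}--\ref{thm:co-Lagrangian2} concern co-Lagrangian submanifolds of \emph{Lagrangian} fibrations, but at this stage the Liouville tori are only known to be isotropic (Proposition~\ref{prop:LiouvilleToriIsotropic}), not Lagrangian; the ``extend to a Lagrangian foliation'' step is unjustified. More importantly, the full action-angle theorems (\ref{thm:fullAA}, \ref{thm:partialAA}) \emph{use} the present theorem as input, so invoking that machinery here inverts the logical order. Your route for (i) correctly locates the difficulty (smooth dependence of leafwise primitives) but offers no mechanism to resolve it; ``$\bbT^p$-averaged primitives with zero pull-back to $N$'' does not address why the resulting function is single-valued when a leaf meets the reference section in more than a point.
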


In particular, if $\cD$ is a Poisson structure then condition i) holds, and if $\cD$ is a presymplectic structure then condition ii) holds, 
and the theorem is valid in both cases. We don't know whether the above theorem is still true in the ``singular'' case when both of the above two conditions
fail or not: we have not been able to produce
a proof nor a counter-example.

\begin{proof}
Let us first prove the theorem under condition ii), i.e. the characteristic foliation if regular. 
Fix a tubular neighborhood $\cU(N) \cong \bbT^p \times B^q$ with a 
coordinate system
$$(\theta_1 \pmod 1, \hdots, \theta_p \pmod 1, z_1,\hdots, z_q)$$ 
in which the vector fields $X_1,\hdots, X_p$ 
are constant on Liouville tori, as given by Theorem \ref{thm:Liouville}.
What we need to show is that $\dfrac{\partial}{\partial \theta_1}$ is a Hamiltonian vector field.
(Then, by similar arguments, the vector fields $\dfrac{\partial}{\partial \theta_2}, \hdots, 
\dfrac{\partial}{\partial \theta_p}$ are also Hamiltonian, so the Liouville torus action is Hamiltonian).
By Theorem \ref{thm:Liouville}, we can write
\begin{equation}
\frac{\partial}{\partial \theta_1}  = \sum_{i=1}^p r_i X_i,
\end{equation}
where the functions $r_i$ are invariant on the Liouville tori. Put
\begin{equation}
 \rho = \sum_{i=1}^p r_i dH_i.
\end{equation}
Then $( \dfrac{\partial}{\partial \theta_1}, \rho ) = 
\sum_{i=1}^p r_i (X_i, dH_i) \in \Gamma(\cD)$ is a section of the Dirac structure. (If $\cD$ is the graph of a presymplectic structure $\omega$
then  $\rho$ is simply the contraction of $\omega$ with 
$\dfrac{\partial}{\partial \theta_1}$).
Since, by Theorem \ref{thm:TorusPreservesStructure4}, 
$\dfrac{\partial}{\partial \theta_1}$ preserves the Dirac structure, 
it also preserves the presymplectic
structure $\omega_S$ of each characteristic leaf $S$, and therefore
$d\rho|_S = d({\dfrac{\partial}{\partial \theta_1}}\lrcorner \omega_S) = 
\cL_{\frac{\partial}{\partial \theta_1}}\omega_S = 0$, i.e., 
the restriction of $\rho$ to each
characteristic leaf  is closed. 

Notice that, by condition ii) and Proposition \ref{prop:LiouvilleToriIsotropic}, each
characteristic leaf in $\cU(N)$ is a trivial fibration by Liouville tori over a disk. The 1-form $\rho$ is
not only closed, but actually exact, on each characteristic leaf, because its pull-back to each Liouville
torus is trivial by construction and by Proposition \ref{prop:LiouvilleToriIsotropic}.  

We can define a Hamiltonian function $\mu_1$ associated to $\dfrac{\partial}{\partial \theta_1}$ as follows (in the presymplectic case, $\mu_1$ is just
a function whose differential is $\rho$, but here the constrution is a bit 
more elaborate because if we do it leaf-wise we have to assure that the end result is a smooth function):

Fix a point $x_0 \in N$, and let $D$ be a small disk containing $x_0$ which is 
transversal to the characteristic foliation. 
Let  $H_1,\hdots, H_p$ be arbitrary Hamiltonian functions associated to $X_1,\hdots,X_p.$ 
For each $y \in \cU(N)$, denote by $y_0$ the  intersection point of the characteristic 
leaf through $y$ in $\cU(N)$ with $D$, and define
\begin{equation} \label{eqn:ActionFunctionIntegral}
\mu_1(y) = \int_{y_0}^y \rho,
\end{equation}
where the above integral means the integral of $\rho$ over a path on a characteristic leaf from $y_0$ to $y$.
The function  $\mu_1(y)$ is well defined, i.e. single-valued and does not depend on the choice of the path,
because of the exactness of $\rho$ on the characteristic leaves. 
It is also obvious that $d\mu_1 = \rho,$ i.e. $\mu_1$ is a Hamiltonian
function of $\dfrac{\partial}{\partial \theta_1}.$

Let us now assume that condition ii) fails, but condition i) holds, i.e.
$$d = \dim (Span_\bbR(X_1(x),\hdots,X_p(x)) \cap (T_xM \cap \cD_x))$$ 
is a constant on $\cU(N).$
Without loss of generality, we can assume that $X_1(x),\hdots,X_{p-d}(x)$ are linearly independent
modulo $Span_\bbR(X_1(x),\hdots,X_p(x)) \cap (T_xM \cap \cD_x))$ for any $x \in \cU(N)$. It implies
that $dH_1 \wedge \hdots \wedge dH_{p-d}(x) \neq 0$ everywhere in $\cU(N).$  By the inverse function
theorem, there exists a disk $D$ which intersects the characteristic leaf $S \ni x_0$ transversally at $x_0,$
and such that the functions $H_1,\hdots,H_{p-d}$ are invariant on $D$. 

Define the action function $\mu_1$ by the same Formula \eqref{eqn:ActionFunctionIntegral} as above, with
$y_0 \in D$. Since
the characteristic foliation in $\cU(N)$ is singular, a general characteristic leaf in $\cU(N)$ can intersect $D$ at 
a submanifold instead of just a point. In order to show that $\mu_1$ is well-defined, we have to check that
if $\gamma$ is an arbitrary oriented curve lying on the intersection of a characteristic leaf $S$ with the disk $D$,
then we have $\int_\gamma \rho = 0.$ But it is the case, because the pull-back of $dH_i$ to $\gamma$ is trivial
for all $i =1,\hdots,p$ by construction. Thus $\mu_1$ is a well-defined single-valued Hamiltonian function
of $\dfrac{\partial}{\partial \theta_1}$, and the theorem is proved.
\end{proof}

The Hamiltonian functions $\mu_1,\hdots,\mu_p$ of the generators 
$\dfrac{\partial}{\partial \theta_1}, \hdots, 
\dfrac{\partial}{\partial \theta_p}$ of the Liouville torus action given 
in Theorem \ref{thm:LiouvilleHamiltonianAction}
will be called \textbf{\textit{action functions}} or \textbf{\textit{action variables}} of the integrable system.
Notice that the action functions are  determined by the system only up to Casimir functions and up to a choice
of the generators of the Liouville torus action (or in other words, a choice of the basis of the torus $\bbT^p$).

\begin{remark}
Another way to obtain action variables in the symplectic case is by 
the following classical integral formula for action functions, which was known already 
to Einstein and other physicists (see, e.g., \cite{BergiaNavarro-EinsteinQuantization2000}),
and which was already used by Mineur in his proof of the existence 
of action-angle variables \cite{Mineur-AA1935-37}:
\begin{equation} \label{eqn:MineurIntegral}
 \mu_1 = \int_{\gamma_1} \alpha,
\end{equation}
where $\alpha$ is a 1-form such that $d\alpha|_S = \omega_S$, and $\gamma_1$ is the loop generated by the
vector field $\dfrac{\partial}{\partial \theta_1}$ on the Liouville torus (for each torus). 
But it is not easy to use  Formula \eqref{eqn:MineurIntegral}
on Dirac manifolds, because of the problem of existence and regularity of $\alpha$ in the Dirac case.  That's why
in the proof of Theorem \ref{thm:LiouvilleHamiltonianAction} we used Formula \eqref{eqn:ActionFunctionIntegral}
instead of Formula \eqref{eqn:MineurIntegral} for the action functions.
In the symplectic case, one can see easily that Formula \eqref{eqn:ActionFunctionIntegral} and
Formula \eqref{eqn:MineurIntegral} give rise to the same action function.
Indeed, by moving the path from $y_0$ to $y$ by the flow of $\dfrac{\partial}{\partial \theta_1}$, we get an annulus with two boundary components, one
is a loop on the Liouville torus $N_0$ wich contains $y_0$ and the other one
is a loop on the Liouville torus $N$ wich contains $y$. By Stokes theorem,
the difference between the values of the action function defined by 
Formula \eqref{eqn:MineurIntegral} on $N$ and on $N_0$ is equal to the
integral of the symplectic form $\omega = d\alpha$ over that annulus. On the
other hand, the contraction of $\omega$ with $\dfrac{\partial}{\partial \theta_1}$ is $\rho$, and so  this integral of $\omega$ over the annulus is equal to the integral in Formula \eqref{eqn:ActionFunctionIntegral}.
\end{remark}

\subsection{Action-angle variables on Poisson manifolds}

Recall that 2-vector field $\Pi$ on a manifold $M$
is called a \textbf{\textit{Poisson structure}} if 
the associated Poisson bracket $\{f,g\} = \langle df \wedge dg, \Pi\rangle$
satisfies the Jacobi identity, or equivalently,  $[\Pi,\Pi] = 0$, 
where $[.,.]$ denotes the Schouten bracket, see, e.g., 
\cite{DufourZung-PoissonBook}. Given a function $G$ on a Poisson
manifold $(M,\Pi)$, the vector field $X = d G \lrcorner \Pi$
is called the \textbf{\textit{Hamiltonian vector field}} 
of $G$ with respect to $\Pi$, and it preserves $\Pi$: $\cL_X \Pi = 0$. 

An integrable system $(X_1,\hdots,X_p,F_1,\hdots,F_q)$ on a Poisson manifold
$(M,\Pi)$ is called \textbf{\textit{Hamiltonian}} with respect to $\Pi$ 
if all the vector fields $X_i = dG_i \lrcorner \Pi$ ($i = 1,\hdots, p$)
are Hamiltonian.  In this case, the functions $G_i$ are automatically
invariant with respect to the Liouville 
$\bbT^p$-action in the neighborhood of each Liouville torus.
Let us recall the following theorem, which is a special case
of Theorem \ref{thm:LiouvilleHamiltonianAction}:

\begin{theorem}[The Liouville torus action is Hamiltonian]
\label{thm:Poisson_HamiltonTorus}
Let $N$ be a Liouville torus of an integrable Hamiltonian system
$(X_1,\hdots,X_p,F_1,\hdots,F_q)$ on a Poisson manifold $(M,\Pi)$.
Then the Liouville $\bbT^p$-action in a neighborhood $\cU(N)$ of $N$ is 
a Hamiltonian torus action.
\end{theorem}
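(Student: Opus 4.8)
The plan is to derive this as a direct corollary of Theorem~\ref{thm:LiouvilleHamiltonianAction}, so the only real task is to check that the Poisson case always satisfies one of the two extra hypotheses in that theorem. First I would observe that when $\cD$ is the graph of a Poisson structure $\Pi$, i.e. $\cD = \{(\alpha \lrcorner \Pi, \alpha) \mid \alpha \in T^*M\}$, the intersection $\cD_x \cap T_xM$ consists of those pairs $(X,0)$ with $X = 0 \lrcorner \Pi = 0$; hence $\cD_x \cap T_xM = \{0\}$ for every $x$. Consequently
\begin{equation}
\dim\bigl(Span_\bbR(X_1(x),\hdots,X_p(x)) \cap (T_xM \cap \cD_x)\bigr) = 0
\end{equation}
identically on any neighborhood of $N$, so condition i) of Theorem~\ref{thm:LiouvilleHamiltonianAction} holds trivially (with $d=0$). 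This already invokes the theorem and gives that the Liouville $\bbT^p$-action is Hamiltonian near $N$.

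The second step is to make explicit what the proof of Theorem~\ref{thm:LiouvilleHamiltonianAction} produces in this simpler setting, so that the statement is self-contained. Taking the Liouville coordinates $(\theta_1,\hdots,\theta_p,z_1,\hdots,z_q)$ from Theorem~\ref{thm:Liouville} and writing $\partial/\partial\theta_k = \sum_i r_i^{(k)} X_i$ with the $r_i^{(k)}$ constant on the Liouville tori, one sets $\rho_k = \sum_i r_i^{(k)} dG_i$; then $(\partial/\partial\theta_k, \rho_k) \in \Gamma(\cD)$ because $\cD$ is a subbundle closed under $\bbR$-linear combinations and each $(X_i, dG_i)$ lies in it. By the fundamental conservation property (Theorem~\ref{thm:TorusPreservesStructure4}) the vector field $\partial/\partial\theta_k$ preserves $\Pi$, hence preserves the symplectic form on each symplectic leaf, so the restriction of $\rho_k$ to each leaf is closed; its pull-back to each Liouville torus vanishes by Proposition~\ref{prop:LiouvilleToriIsotropic} (the tori are isotropic), so $\rho_k|_S$ is exact on each leaf. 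Because $d=0$ here, the functions $G_1,\hdots,G_p$ restricted to a transversal disk $D$ are already independent enough that a general leaf meets $D$ in a single point, so the integral formula \eqref{eqn:ActionFunctionIntegral}, $\mu_k(y) = \int_{y_0}^{y}\rho_k$ along a path in the leaf through $y$, defines a smooth single-valued function with $d\mu_k = \rho_k$, i.e. a Hamiltonian function for $\partial/\partial\theta_k$.

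Since there is no genuine obstacle here — the Poisson case is precisely the non-singular extreme of the Dirac situation — I do not expect a hard part; the only point requiring a line of care is confirming that $\cD_x \cap T_xM = \{0\}$ for Poisson $\cD$, which is what collapses condition i) to the trivial statement $d=0$ and in turn guarantees that each symplectic leaf meets the transversal $D$ in just one point, so no averaging-over-intersections argument is needed. I would therefore keep the proof to essentially: ``This is the special case of Theorem~\ref{thm:LiouvilleHamiltonianAction} in which $\cD_x \cap T_xM = \{0\}$ for all $x$, so condition i) holds with $d = 0$; the action functions are then given explicitly by $\mu_k = \int \rho_k$ as in \eqref{eqn:ActionFunctionIntegral} with $\rho_k = \sum_i r_i^{(k)} dG_i$.''
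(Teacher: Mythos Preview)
Your approach is correct and matches the paper exactly: the paper also treats this theorem as the special case of Theorem~\ref{thm:LiouvilleHamiltonianAction} in which $\cD_x \cap T_xM = \{0\}$, so that condition~i) holds trivially with $d=0$. One small inaccuracy in your elaboration: even with $d=0$ a symplectic leaf of a \emph{singular} Poisson structure may intersect the transversal disk $D$ in more than a point; the well-definedness of $\mu_k$ then comes not from single intersection but from the fact that each $dG_i$ (and hence $\rho_k$) pulls back to zero on $D$, exactly as in the condition~i) branch of the proof of Theorem~\ref{thm:LiouvilleHamiltonianAction}.
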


Using Theorem \ref{thm:Poisson_HamiltonTorus} and the same arguments as
in the proof of Weinstein's splitting theorem for Poisson structures
(see, e.g., \cite{DufourZung-PoissonBook,Weinstein-Poisson1983}),
one gets the following result about action-angle variables near a Liouville
torus of an integrable Hamiltonian system on a Poisson manifold,
which was obtained earlier by C. Laurent-Gengoux, E. Miranda and P. Vanhaecke 
\cite{LMV-AA2011} with a much longer proof. 

\begin{theorem}[Laurent--Miranda--Vanhaecke]
\label{thm:AAPoisson}
Let $N$ be a Liouville torus of  an integrable Hamiltonian system
$(X_1,\hdots,X_p,F_1,\hdots,F_q)$ 
of type $(p,q)$ on a twisted Poisson manifold $(M,\Pi)$. Then $q \geq p$, 
and on a neighborhood $\cU(N) \cong \mathbb{T}^p \times D^q$ 
of $N$ there exists a coordinate system
\begin{equation}
(\theta_1 \pmod 1, \hdots, \theta_p \pmod 1,
\ z_1, \hdots, z_{q})
\end{equation}
such that
\begin{equation} \label{eqn:AAPoisson}
\Pi = \sum_{i=1}^p \dfrac{\partial}{\partial z_i} \wedge 
\dfrac{\partial}{\partial \theta_i}  +
\sum_{p < i <j \leq q} b_{ij}(z) \dfrac{\partial}{\partial z_i} 
\wedge \dfrac{\partial}{\partial z_j}
\end{equation}
in $\cU(N)$, the functions $F_1, \hdots, F_q$ do not depend on the variables
$\theta_1,\hdots, \theta_p$, and the vector fields $X_i$ can be written as
$X_i = \sum_j c_{ij}(z) \dfrac{\partial}{\partial \theta_j}$. 
\end{theorem}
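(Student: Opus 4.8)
The plan is to combine Theorem~\ref{thm:Poisson_HamiltonTorus}, which hands us a Hamiltonian Liouville torus action, with the coordinate-adjustment mechanism from the proof of Weinstein's splitting theorem, performed $\bbT^p$-equivariantly throughout.

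First I would fix, via Liouville's theorem (Theorem~\ref{thm:Liouville}), a Liouville coordinate system $(\tilde\theta_1,\dots,\tilde\theta_p,z_1,\dots,z_q)$ on $\cU(N)\cong\bbT^p\times B^q$ in which the $X_i$ are constant on the tori and the $F_j$ depend only on $z$. By Theorem~\ref{thm:Poisson_HamiltonTorus} the generators $\partial/\partial\tilde\theta_i$ of the Liouville torus action are Hamiltonian, and the action-function construction of Theorem~\ref{thm:LiouvilleHamiltonianAction} (Formula~\eqref{eqn:ActionFunctionIntegral}) produces functions $\mu_1,\dots,\mu_p$ with $\partial/\partial\tilde\theta_i = X_{\mu_i}$; since $d\mu_i$ is a combination of the $dH_j$, it annihilates the torus directions, so each $\mu_i$ is a function of $z$ alone. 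Hence $\{\mu_i,\mu_j\}=X_{\mu_i}(\mu_j)=\partial_{\tilde\theta_i}\mu_j=0$, and, because the $X_{\mu_i}=\partial/\partial\tilde\theta_i$ are everywhere independent, $d\mu_1\wedge\cdots\wedge d\mu_p\neq 0$; in particular $q\ge p$, and after reordering $z_1,\dots,z_q$ we may replace $(z_1,\dots,z_p)$ by $(\mu_1,\dots,\mu_p)$ and work in coordinates $(\tilde\theta_1,\dots,\tilde\theta_p,\mu_1,\dots,\mu_p,z_{p+1},\dots,z_q)$. Since $\Pi$ is $\bbT^p$-invariant (the fundamental conservation property, Theorem~\ref{thm:TorusPreservesStructureI}), its coefficients are independent of $\tilde\theta$, and computing the brackets of the coordinate functions ($\{\mu_i,\mu_j\}=0$, $\{\mu_i,\tilde\theta_j\}=\delta_{ij}$, $\{\mu_i,z_k\}=0$ for $k>p$) gives
\begin{equation}
\Pi=\sum_{i=1}^p\frac{\partial}{\partial\mu_i}\wedge\frac{\partial}{\partial\tilde\theta_i}+\frac12\sum_{i,j}\lambda_{ij}\frac{\partial}{\partial\tilde\theta_i}\wedge\frac{\partial}{\partial\tilde\theta_j}+\sum_{i,k}\sigma_{ik}\frac{\partial}{\partial\tilde\theta_i}\wedge\frac{\partial}{\partial z_k}+\sum_{k<l}b_{kl}\frac{\partial}{\partial z_k}\wedge\frac{\partial}{\partial z_l},
\end{equation}
where all coefficient functions depend only on $(\mu,z_{p+1},\dots,z_q)$.

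It then remains to remove the ``connection terms'' $\lambda_{ij}$ and $\sigma_{ik}$. Mimicking the coordinate-adjustment step of Weinstein's splitting theorem, I would seek new coordinates $\theta_i=\tilde\theta_i+g_i(\mu,z)$ and $w_k=z_k+h_k(\mu,z)$ ($k>p$), keeping the $\mu_i$ fixed: such changes preserve periodicity of the angles, keep $\partial/\partial\theta_i=\partial/\partial\tilde\theta_i$, and preserve $\{\mu_i,\theta_j\}=\delta_{ij}$ and $\{\mu_i,w_k\}=0$, while the $g_i,h_k$ are chosen so that $\{\theta_i,w_k\}=0$ and $\{\theta_i,\theta_j\}=0$. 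Once these vanish, $\{w_k,w_l\}$ is automatically a function of $(\mu,w)$ alone (by $\bbT^p$-invariance), so $\Pi$ takes exactly the form~\eqref{eqn:AAPoisson} after renaming $\mu_i$ as $z_i$. Since $w_{p+1},\dots,w_q$ and the $\mu_i$ are functions of the original $z$-variables and the $\theta_i$ differ from $\tilde\theta_i$ by functions of the base, the $F_j$ still depend only on $z$ and each $X_i$ is still of the form $\sum_j c_{ij}(z)\,\partial/\partial\theta_j$.

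The main obstacle is exactly the solvability of the equations defining $g_i$ and $h_k$: this is a system of first-order PDEs along the characteristic leaves whose local integrability condition is precisely the (twisted) Jacobi identity for $\Pi$, and whose global single-valuedness on $\cU(N)=\bbT^p\times B^q$ holds because $\bbT^p$-invariance makes all the relevant obstruction data basic over $B^q$ and $B^q$ is contractible --- the exact analog of the passage from ``$\beta$ basic and closed'' to ``$\beta$ exact'' in the symplectic proof of Section~\ref{section:AA2Form}. In the genuinely twisted case one uses the twisted version of the splitting theorem; alternatively, since a twisted Poisson structure is a Dirac structure, the whole argument may be run inside the Dirac framework, using Theorem~\ref{thm:LiouvilleHamiltonianAction} and the co-Lagrangian normal form of Theorem~\ref{thm:co-Lagrangian1}.
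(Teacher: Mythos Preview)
Your proposal is correct and follows essentially the same route as the paper: invoke Theorem~\ref{thm:Poisson_HamiltonTorus} to get a Hamiltonian Liouville $\bbT^p$-action with momentum map $(\mu_1,\dots,\mu_p)$, then run the Weinstein splitting argument $\bbT^p$-equivariantly to produce the remaining coordinates.

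The only difference is packaging. The paper does the angle-adjustment in one stroke: it constructs the $\theta_i$ directly by choosing a \emph{coisotropic} section $S$ of the Liouville fibration, setting $\theta_i=0$ on $S$, and extending by the flows of the $Z_i$; the coisotropy of $S$ is exactly the condition $\{\theta_i,\theta_j\}=0$, and the inductive construction of such a section (together with the complementary coordinates $z_{p+1},\dots,z_q$) \emph{is} the Weinstein splitting step. Your version instead starts from arbitrary Liouville angles $\tilde\theta_i$ and then corrects by $\theta_i=\tilde\theta_i+g_i$, $w_k=z_k+h_k$, solving the resulting first-order system via the Jacobi identity. These are two equivalent ways of organizing the same computation; the coisotropic-section language is more compact, while your explicit correction makes visible the term $\{\theta_i,z_k\}$ (for $k>p$) that must also be killed --- a point the paper's proof passes over rather quickly. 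Either way the $\bbT^p$-invariance of everything (via Theorem~\ref{thm:TorusPreservesStructureI}) is what makes the obstruction data basic over $B^q$ and hence trivially integrable.
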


\begin{proof}
Denote by $Z_1,\hdots,Z_p$ the generators of the Liouville 
$\bbT^p$-action, which is a Hamiltonian torus action 
by Theorem \ref{thm:Poisson_HamiltonTorus}, and
denote by $(z_1,\hdots, z_p): \cU(N) \to \mathbb{R}^p$ a corresponding
momentum map. 

By induction, we can construct one by one the periodic
functions $\theta_1, \theta_2, \hdots, \theta_p \pmod 1$ on $\cU(N)$
with the following properties:

i) $Z_i (\theta_j) = \delta_{ij}$ is the Kronecker symbol 
($\delta_{ij} = 1$ if $i=j$ and  $\delta_{ij} = 0$ if $i \neq j$).

ii) $\{ \theta_i , \theta_j\}_\Pi = 0$ for all $i,j=1,\hdots,p$.

(It is equivalent to the construction of a coisotropic section $S$ 
to the  Liouville torus fibration in $\cU(N)$, and then put $\theta_i = 0$
on $S$ and then extend them to $\cU(N)$ in a natural way so that
$Z_i(\theta_j) = \delta_{ij}$). 

The functions $z_1,\hdots, z_p$ are 
functionally dependent and $\mathbb{T}^p$-invariant, so we can choose $q-p$ additional 
$\mathbb{T}^p$-invariant coordinates $z_{p+1},\hdots , z_q$ to get a complete coordinate
system on $\cU(N)$. In this coordinate system 
$(\theta_1 \pmod 1, \hdots, \theta_p \pmod 1, z_1, \hdots, z_{q})$, 
all the coefficients of $\Pi$ are $\mathbb{T}^p$-invariant (i.e., they
do not depend on the coordinates $\theta_i$); there is no term of the type
$\dfrac{\partial}{\partial z_i} \wedge 
\dfrac{\partial}{\partial z_j}$ with $i \leq p$ or of the type 
$\dfrac{\partial}{\partial z_i} \wedge 
\dfrac{\partial}{\partial \theta_j}$ 
with $i \neq j$ in the expression of $\Pi$
because  $dz_i \lrcorner \Pi = \dfrac{\partial}{\partial \theta_i}$. 
There is no term
of the type $\dfrac{\partial}{\partial \theta_i} \wedge 
\dfrac{\partial}{\partial \theta_j}$ 
in the expression of $\Pi$ either, because  $\{\theta_i,\theta_j\} = 0$.
That's why $\Pi$ has the expression \eqref{eqn:AAPoisson}. 
\end{proof} 

In particular, in the case of Liouville-integrable Hamiltonian
systems on Poisson manifolds (i.e. the Poisson structure is regular and the dimension
of the Liouville torus is half the dimension of the symplectic leaves) then the
term $\sum_{p < i <j \leq q} b_{ij}(z) \dfrac{\partial}{\partial z_i} 
\wedge \dfrac{\partial}{\partial z_j}$
disappears, and the Poisson structure $\Pi$ has the simple form 
\begin{equation} \label{eqn:AAPoisson2}
\Pi = \sum_{i=1}^p \dfrac{\partial}{\partial z_i} 
\wedge \dfrac{\partial}{\partial \theta_i}
\end{equation}
similarly to the symplectic case. In fact, this Liouville-integrable Poisson 
case is nothing but a parametrized version of the classical Liouville-integrable 
symplectic case. 

\subsection{Full action-angle variables on Dirac manifolds}

As was shown in Proposition \ref{prop:LiouvilleToriIsotropic}, 
Liouville tori of integrable Hamiltonian
systems are isotropic. As a consequence, their dimension satisfies 
the inequality
\begin{equation}
\dim N \leq \frac{1}{2} \rank \omega_S + r,
\end{equation}
where $r = \dim (\cD_x \cap T_xM)$ is the corank of $\omega$ on 
the characteristic leaf containing a Liouville torus $N$. 
The dimension of $N$ is the number of angle variables, 
and also the  number of action variables that we can have. 

In the optimal case, when the above inequality becomes equality, 
i.e., $N$ is a Lagrangian submanifold, then we will say that we have 
a \textbf{\textit{full set of action-angle variables}}.
The word ``full'' means that the presymplectic form in 
this case can be completely described in terms
of action-angle variables. More precisely, we have: 

\begin{thm}[Full action-angle variables]  \label{thm:fullAA}
Let $N$ be a Liouville torus of an integrable Hamiltonian system
\begin{equation}
(X_1,\hdots,X_{m+r},F_1,\hdots,F_{m+s}) 
\end{equation}
on a regular Dirac manifold $(M,\cD)$ of bi-corank $(r,s)$ 
and dimension $n = 2m + r+s$.  
Then the Liouville tori of the system
in a tubular neighborhood $\cU(N)$ are Lagrangian submanifolds of $(M,\cD)$, 
and there is a coordinate system   
\begin{equation}
(\theta_1 \pmod 1, \hdots, \theta_{m+r} \pmod 1, z_1,\hdots, z_{m+s}) 
\end{equation}
on 
\begin{equation}
\cU(N) \cong \bbT^{m+r} \times B^{m+s},
\end{equation}
and action functions 
\begin{equation}
\mu_1 = z_1,\hdots, \mu_m = z_m, \mu_{m+1},\hdots, \mu_{m+r} 
\end{equation}
on  $\cU(N)$, such that the Liouville torus action is generated
by $(\dfrac{\partial}{\partial \theta_1}, \hdots, \dfrac{\partial}{\partial \theta_{m+r}})$,
the functions  $\mu_{m+1},\hdots, \mu_{m+r}$ 
depend only on the  coordinates $z_1,\hdots,z_{m+s}$, 
the characteristic leaves of $\cD$ in $\cU(N)$ are
\begin{equation} \label{eqn:RegularLeaves}
 S_{c_1,\hdots,c_s} = \{z_{m+1}= c_1,\hdots z_{m+s} = c_s\},
\end{equation}
and the presymplectic form $\omega_S$ on each leaf $S= S_{c_1,\hdots,c_s}$ is
\begin{equation} \label{eqn:AA-PresympmlecticFrom}
 \omega_S = (\sum_{i=1}^{m+r} d\mu_i \wedge d \theta_i)|_S .
\end{equation}
\end{thm}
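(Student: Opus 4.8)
The plan is to combine the three ingredients already developed in this section: the Liouville coordinate system (Theorem~\ref{thm:Liouville}), the fact that the Liouville torus action is Hamiltonian (Theorem~\ref{thm:LiouvilleHamiltonianAction}, which applies here because $\cD$ is assumed regular, so condition ii) is satisfied), and the fundamental conservation property (Theorem~\ref{thm:TorusPreservesStructure4}, which guarantees the $\bbT^{m+r}$-action preserves $\cD$, its characteristic foliation and its kernel foliation). The key point that makes everything ``full'' is that, by Proposition~\ref{prop:LiouvilleToriIsotropic}, the Liouville tori are isotropic, and a dimension count shows $\dim N = m+r = \tfrac12\rank\omega_S + r$, which is exactly the Lagrangian dimension; so the tori are in fact Lagrangian leaves of a Lagrangian fibration.

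First I would fix a Liouville coordinate system and replace the $X_i$ by the generators $Z_1=\partial/\partial\theta_1,\dots,Z_{m+r}=\partial/\partial\theta_{m+r}$ of the Liouville torus action. By Theorem~\ref{thm:LiouvilleHamiltonianAction} each $Z_i$ has an action function $\mu_i$ with $(Z_i,d\mu_i)\in\Gamma(\cD)$; since the $\mu_i$ are $\bbT^{m+r}$-invariant they descend to functions on $B^{m+s}$. Because the characteristic foliation is regular, its leaves have codimension $s$; they are $\bbT^{m+r}$-invariant (conservation property) and are cut out by $s$ of the invariant functions, so after a change of the $z$-coordinates I may assume the leaves are $\{z_{m+1}=c_1,\dots,z_{m+s}=c_s\}$ as in~\eqref{eqn:RegularLeaves}. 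The differentials $d\mu_1,\dots,d\mu_{m+r}$ are independent modulo the ideal of Casimirs (equivalently, modulo $dz_{m+1},\dots,dz_{m+s}$), because $\rank\,proj_{T^*M}\cD = n-r = m+s+r$ and the $Z_i$ span a complementary isotropic subspace; this lets me choose $z_1=\mu_1,\dots,z_m=\mu_m$ and keep $\mu_{m+1},\dots,\mu_{m+r}$ as functions of $z_1,\dots,z_{m+s}$, completing the coordinate system.

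Next, the decisive computation: restrict to a characteristic leaf $S$. On $S$ the $Z_i$ are tangent, isotropic directions for $\omega_S$ are spanned by the kernel of $\omega_S$ (here $r$-dimensional, spanned by the kernel distribution) together with the Liouville directions; and $Z_i\lrcorner\omega_S = d(\mu_i|_S)$ by the definition of Hamiltonian vector field on a Dirac manifold. Therefore the 2-form $\beta_S := \omega_S - (\sum_{i=1}^{m+r} d\mu_i\wedge d\theta_i)|_S$ is $\bbT^{m+r}$-invariant (by the conservation property applied leafwise) and satisfies $Z_i\lrcorner\beta_S=0$ for all $i$; being basic with respect to the torus action on $S$, it is the pullback of a 2-form in the remaining variables $z_{m+1},\dots,z_{m+s}$ — but these are constant on $S$, so $\beta_S=0$. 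This gives~\eqref{eqn:AA-PresympmlecticFrom}, and since $\omega_S$ is Lagrangian-isotropic on $N$ and the torus directions are null, $N$ is a Lagrangian submanifold and the fibration by Liouville tori is Lagrangian.

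\textbf{The main obstacle} I expect is the independence-of-differentials bookkeeping: making sure that $d\mu_1,\dots,d\mu_m$ can genuinely be taken as $dz_1,\dots,dz_m$ while the remaining $\mu_{m+i}$ are smooth functions of the $z$'s, and that this is compatible with the regular-Dirac Darboux normal form (Proposition~\ref{thm:Darboux}) — i.e., that the kernel distribution, which is spanned by $\partial/\partial\theta_{m+1},\dots,\partial/\partial\theta_{m+r}$, is consistent with $\mu_{m+1},\dots,\mu_{m+r}$ depending only on $z$. This follows because the kernel directions are precisely the Liouville directions $Z_i$ for which the Hamiltonian relation forces $d\mu_i|_S=Z_i\lrcorner\omega_S=0$ on each leaf, so $\mu_{m+1},\dots,\mu_{m+r}$ are constant on leaves, hence functions of $z_{m+1},\dots,z_{m+s}$ only — wait, one must be careful: they are constant \emph{along the kernel foliation inside each leaf}, which together with leafwise-invariance makes them functions of $z_1,\dots,z_{m+s}$, and that is exactly what the statement claims. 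A secondary technical nuisance, handled as in Theorem~\ref{thm:TorusPreservesStructure4}, is possible non-orientability, which is removed by passing to a double cover; and smoothness of the leafwise-defined action functions across the family of leaves is guaranteed by the integral construction~\eqref{eqn:ActionFunctionIntegral} used in Theorem~\ref{thm:LiouvilleHamiltonianAction} rather than by a leaf-by-leaf argument.
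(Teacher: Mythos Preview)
Your overall strategy matches the paper's, but there is a genuine gap in the step where you conclude $\beta_S=0$. You correctly show that $\beta_S := \omega_S - (\sum_{i=1}^{m+r} d\mu_i\wedge d\theta_i)|_S$ is $\bbT^{m+r}$-invariant and satisfies $Z_i\lrcorner\beta_S=0$, so it is basic with respect to the torus action on $S$. But the quotient $S/\bbT^{m+r}$ is $m$-dimensional, parametrized by $z_1,\dots,z_m$ (the leafwise-independent action coordinates), \emph{not} by the Casimir variables $z_{m+1},\dots,z_{m+s}$, which are already constant on $S$. So $\beta_S$ is a priori an arbitrary 2-form $\sum_{1\le i<j\le m} b_{ij}(z)\,dz_i\wedge dz_j$, and there is no reason for it to vanish with an arbitrary choice of the angle coordinates $\theta_i$. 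This is exactly the ``magnetic term'' that appears in the general normal form~\eqref{eqn:NF2Form2}; it does not disappear automatically.

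The paper closes this gap by choosing a \emph{co-Lagrangian} section $D$ of the Liouville-torus fibration and normalizing the angles so that $\theta_i=0$ on $D$. Formula~\eqref{eqn:AA-PresympmlecticFrom} is then checked pointwise at $x\in D$ by decomposing $T_xS = T_xN \oplus (T_xD\cap T_xS)$: the three pairings vanish or match by, respectively, isotropy of $N$, the co-Lagrangian condition $\omega_S|_{T_xD\cap T_xS}=0$ (this is what kills the magnetic term), and the Hamiltonian relation $\omega_S(\partial/\partial\theta_j,Y)=-d\mu_j(Y)$. An alternative repair of your argument is to note that $\beta_S$ is closed (since $\omega_S$ and $\sum d\mu_i\wedge d\theta_i|_S$ are), write $\beta_S = d(\sum_{j\le m} a_j(z)\,dz_j)$ by Poincar\'e's lemma, and absorb it by the shift $\theta_j\mapsto\theta_j - a_j$ as in Section~\ref{section:AA2Form}; but you must then verify that the $a_j$ can be chosen smoothly in the Casimir parameters $z_{m+1},\dots,z_{m+s}$. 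Either way, a specific choice of section (or of angle normalization) is needed, and your proof as written omits it.
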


\begin{proof}
The fact that the Liouville tori are Lagrangian is given by Proposition \ref{prop:LiouvilleToriIsotropic} and the
definition of Lagrangian submanifolds. Since the fibration by Liouville tori 
is Lagrangian, we can choose a co-Lagrangian section $D$ of this fibration, 
and a  coordinate system 
$$(\theta_1 \pmod 1, \hdots, \theta_{m+r} \pmod 1, z_1,\hdots, z_{m+s}) $$ 
on $\cU(N)$
such that the leaves of the regular characteristic foliation 
are given by Formula \eqref{eqn:RegularLeaves} and
the  functions $\theta_1, \hdots, \theta_{m+r}$ vanish on $D$, 
i.e. the co-Lagrangian disk $D$ is given by the equation
\begin{equation}
D = \{\theta_1 = 0, \hdots, \theta_{m+r} = 0\}. 
\end{equation}

The existence of action variables $\mu_1,\hdots,\mu_{m+r}$ 
corresponding to the vector fields 
$\dfrac{\partial}{\partial \theta_1}, \hdots,\dfrac{\partial}{\partial \theta_{m+r}}$ 
is given by Theorem \ref{thm:LiouvilleHamiltonianAction}. Without loss of generality, 
we can assume that
$TN$ is spanned by $\dfrac{\partial}{\partial \theta_1}, \hdots,
\dfrac{\partial}{\partial \theta_{m}}$
and the kernel $K = \cD \cap TM.$ Then  
$$d\mu_1 \wedge \hdots \wedge d\mu_m|_S \neq 0$$
everywhere in $\cU(N)$, i.e.,  the functions $\mu_1,\hdots, \mu_m$ 
are functionally independent on the symplectic leaves, but 
$$d\mu_1 \wedge \hdots \wedge d\mu_m \wedge d \mu_{m+i}|_S = 0 \; 
\forall \; i=1,\hdots, r.$$
It follows that we can put $z_1 = \mu_1,\hdots, z_m = \mu_m,$ and choose $z_{m+1},\hdots, z_{m+s}$
to be Casimir functions. 

It remains to prove Formula \eqref{eqn:AA-PresympmlecticFrom}. 
By the invariance of everything with respect to the
Liouville torus action, it is enough to prove this formula at a point $x \in D.$ 
Without loss of generality, we can assume that $\{x\} = N \cap D.$

If $X, Y \in T_xS$ are two vector fields tangent to the characteristic foliation at $x$ 
such that $X,Y \in T_xN$, then
$\omega_S(X,Y) = 0$ due to the isotropy of $N$, and $d\mu_i(X) = d\mu_i(Y) = 0$ for all 
$i=1,\hdots,{m+r},$ which implies
that $(\sum_{i=1}^{m+r} d\mu_i \wedge d \theta_i) (X,Y) = 0.$

If $X,Y \in T_xD \cap T_xS$ then $\omega_S(X,Y) = 0$ because $D$ is co-Lagrangian, and    
$(\sum_{i=1}^{m+r} d\mu_i \wedge d \theta_i) (X,Y) = 0$ because 
$d\theta_i(X) = d\theta_i(Y) = 0$ by construction.

If $X = \frac{\partial}{\partial \theta_j} \in T_xN$ and $Y  \in T_xD \cup T_xS$ then by construction we also have
\begin{multline*}
\omega_S(X,Y) = \omega(\frac{\partial}{\partial \theta_j}, Y) = - d\mu_j (Y) 
= d\mu_j \wedge d \theta_j  (X,Y) =
(\sum_{i=1}^{m+r} d\mu_i \wedge d \theta_i) (X,Y).
\end{multline*}

Since any vector pair $(X,Y) \in (T_xD \cap T_xS)^2$ can be decomposed into a linear combination of pairs of the above types, 
Formula \eqref{eqn:AA-PresympmlecticFrom} is proved.
\end{proof}

\begin{remark}
The above theorem is the analog in the Dirac setting of the action-angle variables theorem for Hamiltonian systems on symplectic
or Poisson manifolds which are integrable à la Liouville. 
In the symplectic case,  the fibers of a regular Lagrangian fibration  with compact fibers are automatically tori, but this fact is no longer
true in the Dirac case: due to the degeneracy of the presymplectic forms on characteristic leaves, 
one can have non-torus Lagrangian fibrations with compact fibers on Dirac manifolds. So on a Dirac manifold 
we need not only a Lagrangian fibration, but also an integrable Hamiltonian system, 
in order to get action-angle variables.
\end{remark}

\subsection{Partial action-angle variables on Dirac manifolds} 
For non-commutatively integrable Hamiltonian systems on symplectic
or Poisson manifolds, there are not enough action-angles variables  
to form a complete  coordinate system, but one can complete these variables by some additional coordinates to form
canonical coordinate systems (see \cite{Nekhoroshev-AA1972,MF-Noncommutative1978,LMV-AA2011}
and previous sections of the present paper). The same is also true in the Dirac setting, 
when the Liouville tori are isotropic but not Lagrangian:

\begin{thm}[Partial action-angle variables]  \label{thm:partialAA}
Let $N$ be a Liouville torus in an integrable Hamiltonian system
$(X_1,\hdots,X_{p},F_1,\hdots,F_{q})$ on a regular Dirac manifold 
$(M,\cD)$ of bi-corank $(r,s)$, such that
the distribution $TN \cap \cD$ is regular of rank $d$ ($0 \leq d \leq r$) 
in a small tubular neighborhood $\cU(N)$  of $N$
fibrated by Liouville tori.  Then there is a coordinate system   
\begin{equation}
(\theta_1 (mod\ 1), \hdots, \theta_{p} (mod\ 1), z_1,\hdots, z_{q}) 
\end{equation}
on 
\begin{equation}
\cU(N) \cong \bbT^{p} \times B^{q}
\end{equation}
and action functions 
\begin{equation}
\mu_1 = z_1,\hdots, \mu_{p-d} = z_{p-d}, \mu_{p-d+1},\hdots, \mu_{p} 
\end{equation}
on  $\cU(N)$, such that the functions  $\mu_{p-q+1},\hdots, \mu_{p}$ 
depend only on the  coordinates 
\begin{equation}
z_1,\hdots,z_{p-d},z_{q-s+1},\hdots, z_{q}, 
\end{equation}
the characteristic leaves of $\cD$ in $\cU(N)$ are
\begin{equation} \label{eqn:RegularLeavesII}
 S_{c_1,\hdots,c_s} = \{z_{q-s+1}= c_1,\hdots, z_{q} = c_s\},
\end{equation}
and the presymplectic form $\omega_S$ on each leaf $S= S_{c_1,\hdots,c_s}$ is of the form
\begin{equation} \label{eqn:AA-PresympmlecticFromII}
 \omega_S = (\sum_{i=1}^{p} d\mu_i \wedge d \theta_i)|_S + \sum_{p-d<i<j\leq q-s-r+d} f_{ij} dz_i \wedge dz_j|_S .
\end{equation}
\end{thm}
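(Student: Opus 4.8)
The plan is to mimic the proof of Theorem~\ref{thm:fullAA}, but to combine it with the Nekhoroshev-type argument that produces the normal form~\eqref{eqn:NF2Form5}, carrying the latter out leaf by leaf after reducing each characteristic leaf by the part of its kernel foliation that is tangent to the Liouville tori. Write $2m := \dim M - r - s = p+q-r-s$, so that each characteristic leaf $S$ has $\dim S = \dim M - s$ and $\mathrm{rank}\,\omega_S = 2m$.

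First I would set up the scene exactly as in the full case. Fix a Liouville coordinate system $(\theta_1 \pmod 1,\hdots,\theta_p \pmod 1, z_1,\hdots,z_q)$ on $\cU(N)\cong\bbT^p\times B^q$ as in Theorem~\ref{thm:Liouville}, with the Liouville torus action generated by $Z_i=\frac{\partial}{\partial\theta_i}$. By the fundamental conservation property (Theorem~\ref{thm:TorusPreservesStructure4}) this action preserves $\cD$, hence the characteristic foliation $\cC$ and the kernel foliation $\cK$ of the leaf presymplectic forms. Since $\cD$ is regular, condition (ii) of Theorem~\ref{thm:LiouvilleHamiltonianAction} holds, so the action is Hamiltonian and we obtain action functions $\mu_1,\hdots,\mu_p$ (Hamiltonians of $Z_1,\hdots,Z_p$, e.g.\ via the integral formula~\eqref{eqn:ActionFunctionIntegral}); they are $\bbT^p$-invariant common first integrals. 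Using Proposition~\ref{thm:Darboux} I would pick $s$ Casimir functions, relabelled $z_{q-s+1},\hdots,z_q$, whose common level sets are the characteristic leaves, so that~\eqref{eqn:RegularLeavesII} holds. The hypothesis that $TN'\cap\cD$ (equivalently $TN'\cap\cK$, the part of the kernel tangent to the Liouville tori) is regular of rank $d$ is then used to split the action functions: this distribution is $\bbT^p$-invariant, integrable (being $\bbT^p$-invariantly spanned by commuting vector fields $\sum a_i(z)\frac{\partial}{\partial\theta_i}$) and isotropic, so after a $\bbT^p$-equivariant change of the torus basis — passing to a finite covering of the system if necessary, as in the proof of Theorem~\ref{thm:TorusPreservesStructure4} — I may assume that $\mu_1,\hdots,\mu_{p-d}$ are functionally independent along the characteristic leaves while $\mu_{p-d+1},\hdots,\mu_p$ become functions of $z_1,\hdots,z_{p-d}$ and of the Casimirs $z_{q-s+1},\hdots,z_q$ alone; I then set $z_i=\mu_i$ for $i\le p-d$.

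It remains to produce the angle and magnetic coordinates. Here I would reduce each characteristic leaf $(S,\omega_S)$ by its kernel foliation $\cK|_S$ to a symplectic manifold $(\bar S,\bar\omega_S)$ of dimension $2m$, on which the restricted system descends to a Hamiltonian system whose Liouville tori are the $(p-d)$-dimensional isotropic images of the original ones, i.e.\ a super-integrable system of exactly the type used to derive Nekhoroshev's normal form~\eqref{eqn:NF2Form5}. Running that construction ($\bbT^{p-d}$-invariant coisotropic section, then Darboux for the transverse directions on the quotient of that section by its isotropic foliation), pulling the resulting coordinates back to $S$ along a $\bbT^p$-invariant section of $S\to\bar S$, and completing with $\theta_{p-d+1},\hdots,\theta_p$ along $TN'\cap\cK$ and with the $r-d$ coordinates $z_{q-s-r+d+1},\hdots,z_{q-s}$ spanning the remainder of $\cK$, yields the desired coordinate system on $S$; the $2$-form $\beta:=\omega_S-\sum_{i=1}^p d\mu_i\wedge d\theta_i$ is $\bbT^p$-basic and, by the index count $(q-s-r+d)-(p-d)=2(m-p+d)$, takes the stated shape $\sum_{p-d<i<j\le q-s-r+d}f_{ij}\,dz_i\wedge dz_j$. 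Doing all of this smoothly over the $s$-dimensional space of Casimir values — legitimate since every object in sight is $\bbT^p$-invariant and the base is contractible — and then checking Formula~\eqref{eqn:AA-PresympmlecticFromII} pointwise on a co-Lagrangian-type section through $N$, by decomposing a tangent pair to $S$ into the cases "both tangent to $N$'', "both tangent to the section'', "one tangent to $N$'', and "both in the magnetic directions'' exactly as in the proofs of Theorem~\ref{thm:fullAA} and of~\eqref{eqn:NF2Form5}, completes the argument.

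The step I expect to be the main obstacle is this last assembly: gluing the leafwise, presymplectically reduced Nekhoroshev normalizations into a single globally smooth, $\bbT^p$-invariant coordinate system on $\cU(N)$, and verifying that the magnetic coefficients $f_{ij}$ come out as functions of the $z$-variables alone and are supported on precisely the index range $p-d<i<j\le q-s-r+d$ (with no $dz_i\wedge dz_j$ surviving for $i\le p-d$ or $j>q-s-r+d$). A secondary technical point is to ensure that $TN'\cap\cK$ is well enough behaved — e.g.\ arises from a subtorus bundle — to furnish genuine angle directions $\theta_{p-d+1},\hdots,\theta_p$; when it is not, one first passes to a suitable finite covering of the system, proves the normal form there, and checks that it descends.
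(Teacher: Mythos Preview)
Your first half---installing Liouville coordinates, producing the action functions via Theorem~\ref{thm:LiouvilleHamiltonianAction}, choosing Casimirs $z_{q-s+1},\ldots,z_q$, splitting off the $p-d$ leafwise-independent actions and setting $z_i=\mu_i$ for $i\le p-d$---matches the paper and is correct, as is your dimension bookkeeping.

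The genuine gap is in the reduction step. You want to form the symplectic quotient $\bar S=S/\cK|_S$ and run Nekhoroshev there, with the Liouville tori descending to $(p-d)$-tori. But nothing in the hypotheses forces the kernel direction $TN'\cap\cK$ on a Liouville torus $N'\cong\bbT^p$ to be rational: by $\bbT^p$-invariance of $\cD$ it is a \emph{linear} $d$-plane field on each torus, and when its slope is irrational the $\cK$-leaves are dense in $N'$, the leaf space $\bar S$ is not Hausdorff, and there is no $(p-d)$-torus on which to run Nekhoroshev. Your proposed fix, a finite covering, only repairs rational non-integral slopes; it does nothing for, say, $\cK=\mathrm{span}\bigl(\tfrac{\partial}{\partial\theta_1}+\alpha\tfrac{\partial}{\partial\theta_2}\bigr)$ with $\alpha\notin\mathbb{Q}$ on the presymplectic manifold $\bigl(\bbT^2\times\bbR^3,\ (d\theta_2-\alpha\, d\theta_1)\wedge dz_1+dz_2\wedge dz_3\bigr)$, which already satisfies all the hypotheses with $(p,q,r,s,d)=(2,3,1,0,1)$.

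The paper avoids reduction altogether. It works directly on a $q$-dimensional section $D$ of the Liouville fibration, chosen so that its image in any \emph{local} symplectic transversal to $\cK$ is coisotropic of codimension $p-d$ and so that $TD\cap\cK$ is a regular $(r-d)$-dimensional foliation on $D$. On $D$ one then finds, via the Dirac structure, vectors $Y_1(x),\ldots,Y_{p-d}(x)\in T_xD$ paired in $\cD$ with the $p-d$ linear combinations of the $d\theta_j$ that annihilate $\cK_x$; the resulting distribution $\cY:=\mathrm{span}(Y_i)\oplus(TD\cap\cK)$ on $D$ is integrable of dimension $p+r-2d$, the magnetic coordinates $z_{p-d+1},\ldots,z_{q-s-r+d}$ are taken as its invariants, and the remaining $z_{q-s-r+d+1},\ldots,z_{q-s}$ as duals to $TD\cap\cK$. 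This is precisely the Nekhoroshev-type step carried out intrinsically on the section, without ever forming a global quotient, and it is insensitive to whether $TN\cap\cK$ is rational.
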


\begin{proof}
 The proof is similar to the proof of Theorem \ref{thm:fullAA}. 
 We can assume that $TN$ 
is spanned by $\frac{\partial}{\partial \theta_1}, \hdots,\frac{\partial}{\partial \theta_{p-d}}$
and  $TN \cap \cD.$ Then the action functions $\mu_1,\hdots, \mu_{p-d}$ are independent on the
characteristic leaves, while the remaining action functions $\mu_{p-d+1},\hdots, \mu_p$ are
functionally dependent of $\mu_1,\hdots, \mu_{p-d}$ on each characteristic leaf, i.e., we can write
$\mu_{p-d+1},\hdots, \mu_p$ as functions of $\mu_1,\hdots, \mu_{p-d}, z_{q-s+1}, \hdots, z_{q},$
where the variables $z_{q-s+1}, \hdots, z_{q}$ are Casimir functions, 
and we can put $z_1 = \mu_1,
\hdots, z_{p-q} = \mu_{p-d}$.

Take a section $D$ (of dimension $q$) of the fibration by Liouville tori in $\cU(N)$, with the 
following property: the image of $D$ by the projection $proj: \cU(N) \to \cU(N)/\cK,$
where $\cU(N)/\cK$ denotes the Poisson manifold which is the quotient of $\cU(N)$ by the regular
kernel foliation, is coisotropic of codimension $p-d$, and moreover the intersection of the kernel
foliation with $\cD$ is a regular foliation of dimension $r-d$ on $D$. 
We can choose  the angle variables $\theta_1,\hdots, \theta_p$
so that they vanish on $D$.

The closed 1-forms  $d\theta_i$ do not annul the kernel 
$TM \cap \cD$ in general. 
But they do annul $T_xD \cap \cD_x$ for any point $x \in D$ by construction. 
So for each
$x \in D,$ there are $p-d$ linear combinations $\sum_{j=1}^p c_{ij} d\theta_j$ ($i=1,\hdots, p-d$)
which are linearly independent and which annul the kernel 
$T_xM \cap \cD_x.$ Hence there exist
vectors $$Y_1(x),\hdots, Y_{p-d}(x) \in T_xM$$ 
such that 
$$(Y_i(x), \sum_{j=1}^p c_{ij} d\theta_j(x)) \in \cD_x\; 
\forall\; i=1,\hdots,p-d,$$ 
and these vectors 
$Y_1(x), \hdots,Y_{p-d}(x)$ are linearly independent modulo the kernel $K_x = \cD_x \cap T_xM$
(i.e. no non-trivial linear combination of these vectors lies in $K_x$). 

By the coisotropy property of
$D$ (or more precisely, of the projection of $D$ in $\cU(N)/\cK$), we can choose $Y_1(x),\hdots,
Y_{p-d}(x)$ so that they belong to $T_xD.$ One verifies directly that the distribution $\cY$ on $D$
given by 
$$\cY_x = Span_\bbR(Y_1(x),\hdots,Y_{p-d}(x)) \oplus (T_xD \cap \cD_x)$$ 
is an integrable regular distribution of dimension $p+ r - 2d.$ 

Choose the $q-p+2d-r$ coordinates $z_{p-d+1}, \hdots, z_{q-s-r+d}$ 
in such a way that they are constant on the 
Liouville tori, and also invariant with respect to the
distribution $\cY$. Choose $r-d$ additional coordinates 
$z_{q-s-r+d+1},\hdots, z_{q-s}$ such that
they are also constant on the Liouville tori, 
and such that their differentials when restricted to
the $(r-d)$-dimensional space $T_xD \cap \cD_x$ form a basis of 
the dual space of that space for any point $x \in D$. 

Finally, one verifies that 
$ \omega_S - (\sum_{i=1}^{p} d\mu_i \wedge d \theta_i)|_S$
can be expressed as 
$$\sum_{p-d<i<j\leq q-s} f_{ij} dz_i \wedge dz_j|_S,$$ 
in a way similar 
to the end of the proof of Theorem \ref{thm:fullAA} 
\end{proof}

\section{Torus actions and action-angle variables near singularities}
\label{Section:Singularities}

In mechanics, by \textit{canonical} coordinates many people 
often mean \textit{polar} coordinates which are action-angle variables. 
For example, for the harmonic oscillator with the Hamiltonian
$H = (x^2 + y^2)/2$ in canonical coordinates $(x,y)$, the action-angle variables
$(h,\theta)$ where $h= H$ and $\theta = \arctan(x/y)$ are also often used as 
canonical coordinates. 

In order to normalize a system near a singularity and find polar action-angle 
variables, we can follow the same approach which was used for action-angle variables
near Liouville tori, and which consists of 3 steps:

\begin{itemize}
\item \textit{Intrinsic associated torus actions}. 
Show the existence of a natural torus action (real or complex, formal
or analytical or smooth) which is intrinsically associated to the system
near a singularity. Here the word ``singularity'' may mean either a singular point
of a vector field $X$ (where $X$ vanishes), or a singular orbit of the 
$\bbR^p$-action generated by the vector fields $X_1,\hdots, X_p$ of an integrable 
system $(X_1,\hdots, X_p, F_1,\hdots, F_q)$, 
or a singular level set of the system.

\item \textit{Fundamental conservation property}. Show that the associated torus action is a double commutant in the sense that ``anything'' which is preserved by the system
will also be preserved by the torus action.

\item \textit{Simultaneous normalization/linearization}. Once we know 
that the associated torus action 
preserves an underlying geometric structure (e.g., a symplectic form),
and we can linearize/normalize this geometric structure in an equivariant way with respect
to the torus action (e.g., equivariant version of Darboux's theorem), then we  get
a normal form, which may  contain some polar action-angle 
variables for the system. Even when there are no action variables, the normal form
can still be very interesting.
\end{itemize}

Below let us show how this approach works in various situations.

\subsection{Poincaré--Birkhoff normal forms and torus actions}

Consider a formal or analytic vector field $X$ on $\mathbb{K}^m$,
where $\mathbb{K}$ is $\mathbb{R}$ or $\mathbb{C}$, which vanishes
at the origin $O$ of $\mathbb{K}^m$, 
and consider its Taylor expansion in some (formal or analytic) 
local coordinate system:
\begin{equation}
X = X^{(1)} + X^{(2)} + \hdots 
\end{equation}
The semi-simple part $X^{ss}$ of the linear part $X^{(1)}$
of $X$ is diagonalizable over complex numbers, so we can write
\begin{equation}
X^{ss} = \sum_{i=1}^m \gamma_i z_i \dfrac{\partial}{\partial z_i}
\end{equation}
in some complex coordinate system $(z_1,\hdots, z_m)$. 
A nonlinear monomial vector field 
$z_1^{k_1}\hdots z_m^{k_m}\dfrac{\partial}{\partial z_j}$
is called \textbf{\textit{resonant}} if 
\begin{equation}
[X^{ss}, z_1^{k_1}\hdots z_m^{k_m}\dfrac{\partial}{\partial z_j}] = 0,
\end{equation}
or equivalently, the multi-index tuple $(k_1,\hdots,k_m,i)$
satisfies the \textbf{\textit{resonant relation}}
\begin{equation}
\langle \gamma, k \rangle - \gamma_i  := \sum_{i=1}^m \gamma_i k_i - \gamma_j = 0,
\end{equation}

The classical formal normalization theorems of Poincaré, Birkhoff, Dulac,
Gustavson and other authors say that there exists a coordinate system
in which $X$ commutes with its simisimple linear part:
\begin{equation}
[X, X^{ss}] = 0
\end{equation}
or equivalently, all the non-zero nonlinear terms in the Taylor expansion of
$X$ are resonant. The classical proofs of these theorems are based on the 
method of iterative normalization (elimination of non-resonant terms one by one). When $X$ is Hamiltonian on a symplectic space, then this normalization can be done symplectically, i.e. using canonical systems of coordinates
and canonical transformations. 

In particular, if $X = X_H$ is a Hamiltonian vector field on a 
symplectic space, with the Hamiltonian function  $H = H^{(2)} + h.o.t.$ 
having its quadratic part
\begin{equation}
H^{(2)} =\sum_1^n \gamma_i \dfrac{x_i^2 + y_i^2}{2}
\end{equation}
nonresonant in a canonical coordinate system $(x_i,y_i)$
(which means that the numbers $\gamma_1,\hdots, \gamma_n$ are
incommensurable), then all the resonant terms must be
functions of the action variables $p_i = \dfrac{x_i^2 + y_i^2}{2}$, and
the Birkhoff normal form has the form
\begin{equation}
H = h\left( \dfrac{x_1^2 + y_1^2}{2}, \hdots, \dfrac{x_n^2 + y_n^2}{2} \right),
\end{equation}
i.e., after a formal normalization, in the polar action-angle variables 
$(p_1,\theta_1, \hdots, p_n,\theta_n)$
with $p_i = \dfrac{x_i^2 + y_i^2}{2}$ and $\theta_i = \arctan (x_i/y_i)$,
the non-resonant Hamiltonian function $H$ depends only on the action 
variables $(p_1,\hdots, p_n)$ and is formally integrable.

The minimal number
$d$ such that the semisimple linear part $X^{ss}$ of $X$
can be written in the form
\begin{equation} \label{eqn:d-min}
X^{ss} = \sum_{i=1}^d \lambda_i Z_i,
\end{equation}
where 
\begin{equation}
Z_i = \sum_{j=1}^m a_{ij} z_j \dfrac{\partial}{\partial z_j}
\end{equation}
are diagonal vector fields with integer coefficients $a_{ij} \in \mathbb{Z}$, is called the (complex formal) 
\textbf{\textit{toric degree}} of $X$ at $O$.
The minimality of $d$ is equivalent to the incommensurability of the
numbers $\lambda_1, \hdots, \lambda_d$ in Equation \eqref{eqn:d-min}.

In \cite{Zung-Convergence2005,Zung-Convergence2002} we showed the following results, under the above notations:

i) \textit{$X$ admits a natural \textbf{associated system-preserving
intrinsic torus $\mathbb{T}^d$-action}}, where $d$ is the (complex formal)
toric degree of the system. This torus action is only formal in general, 
even when the system is analytic. In the case of Hamiltonian systems on symplectic manifolds, this torus action is also Hamiltonian.

ii) \textit{The normalization of $X$ à la Poincaré-Birkhoff is equivalent to the linearization of its associated torus action}: 
the system is in its normal form if
and only if this torus action is linear. Instead of step-by-step normalization for $x$, 
one can linearize this torus action using the averaging method
with respect to compact group actions.

iii) $X$ admits a local \textit{analytic} normalization if and only if
its associated torus action is  analytic (and not only formal).

iv) If the system is analytically integrable (i.e., it can be included
in a integrable system $(X_1,\hdots, X_p, F_1,\hdots, F_q)$ 
whose components are all analytic) then its associated torus action is analytic. As a consequence, \textit{any analytic integrable system 
admits a local analytic Poincaré-Birkhoff normalization at 
any singular point}.

This last result  doesn't require any additional
condition on (non)resonance, and is a significant improvement over previous
results obtained by other authors for analytic integrable Hamiltonian systems: R\"ussmann \cite{Russmann-NF1964} (1964, the non-degenerate case with 2 degrees of freedom) Vey \cite{Vey-NF1978} (1978, the non-degenerate case),
Ito \cite{Ito-NF1989} (1989, the non-resonant case),
Ito 1992 \cite{Ito-NF1992} and Kappeler--Kodama--Némethi 1998 \cite{KKN-NF1998}
(the cases with a simple resonance). For integrable Hamiltonian systems there 
are related results by Vey \cite{Vey-Isochore1979} 
and Stolotvitch \cite{Stolovitch-Integrable2000,Stolovitch-Cartan2005}, 
among others, about the existence of local analytic normalization.

Similarly to the case of Liouville tori of integrable systems, the associated 
torus action of a formal or analytic vector field $X$ near a singular point also 
has the fundamental conservation property, even without any integrability condition. In particular, we have:

\begin{theorem}
\label{thm:LocalNF-Conservation1}
Let $\cG$ be an arbitrary formal tensor field which is invariant with respect to
a formal vector field $X$ on $\mathbb{C}^m$ which vanishes at the origin $O$. Then the formal torus $\mathbb{T}^d$-action on $\mathbb{C}^m$ 
associated to $X$ (where $d$ is the 
toric degree of $X$) also preserves $\cG$.
\end{theorem}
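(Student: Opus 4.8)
The plan is to mimic the filtration argument used in the proof of Theorem \ref{thm:TorusPreservesStructureI}, adapting it to the local setting at a singular point where the associated torus action is the one appearing in the Poincar\'e--Birkhoff normalization. First I would pass to a formal coordinate system $(z_1,\dots,z_m)$ in which the associated formal torus $\mathbb{T}^d$-action is \emph{linear}, i.e. generated by the diagonal vector fields $Z_i = \sum_j a_{ij} z_j \,\partial/\partial z_j$ with $a_{ij}\in\mathbb{Z}$; such a coordinate system exists by the results recalled in items (i)--(ii) above (\cite{Zung-Convergence2002,Zung-Convergence2005}). In these coordinates $X$ is in normal form, so $[X,Z_i]=0$ for all $i$; equivalently, $X$ commutes with the semisimple linear part $X^{ss}=\sum_i\lambda_i Z_i$, and every nonzero monomial in the Taylor expansion of $X$ (and of $\cG$, after averaging) respects the resonance relations. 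Since the torus action commutes with $X$, the average $\overline{\cG}$ of $\cG$ with respect to the $\mathbb{T}^d$-action is also $X$-invariant, so it suffices to show that $\hat\cG := \cG - \overline{\cG}$ vanishes.

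Next I would set up a filtration on the space of formal tensor fields of fixed type, graded so that the torus-invariant monomials (those built from $z_j\,\partial/\partial z_k$ and $dz_j/z_k$ factors satisfying the resonance relations with the appropriate weights) sit at the bottom, and so that the Lie derivative $\mathcal{L}_X$ is filtration-preserving. The key observation, analogous to the one in the proof of Theorem \ref{thm:TorusPreservesStructureI}, is that $\mathcal{L}_X$ acts on each associated graded piece essentially through the operator $\mathcal{L}_{X^{ss}}$, whose kernel on monomials is exactly the resonant (equivalently, $\mathbb{T}^d$-invariant) part; the non-semisimple part $X^{(1)}-X^{ss}$ and the higher-order terms $X^{(2)},X^{(3)},\dots$ strictly increase the filtration. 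Then a downward induction on the filtration degree shows: the top-degree monomial part $\Psi$ of $\hat\cG$ must be annihilated by $\mathcal{L}_{X^{ss}}$ (otherwise $\mathcal{L}_X\hat\cG=0$ would contain a nonzero term at that degree), hence $\Psi$ is resonant, hence $\mathbb{T}^d$-invariant; but $\hat\cG$ has zero $\mathbb{T}^d$-average by construction, so $\Psi=0$. Peeling off one degree at a time, $\hat\cG=0$.

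I should be a little careful about how to organize the filtration so that it is genuinely compatible with $\mathcal{L}_X$ and not merely with $\mathcal{L}_{X^{ss}}$. The clean way is to filter by the \emph{weight defect}: to a monomial tensor assign the value $\langle\gamma,k\rangle - (\text{weight of the covariant/contravariant factors})$ measured against $X^{ss}$, or rather to filter by total polynomial degree combined with this defect, so that the linear semisimple part preserves each graded piece while everything else (the nilpotent part of $X^{(1)}$ and all $X^{(\ge 2)}$) is strictly filtration-increasing. This is exactly the structure exploited in \cite{Zung-Convergence2002} for the normalization itself, so the induction machine is already available. The main obstacle is purely bookkeeping: making sure the filtration is finite (or at least exhaustive and Hausdorff) in each fixed formal degree so that the downward induction terminates, and checking that ``zero $\mathbb{T}^d$-average'' really does kill the resonant monomials at each stage --- this last point is immediate since a nonzero resonant (i.e. $\mathbb{T}^d$-invariant) monomial has nonzero average, so it cannot appear in $\hat\cG$. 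Once these are in place, the argument is formally identical to the proof of Theorem \ref{thm:TorusPreservesStructureI}, and in fact one may simply invoke that method verbatim with ``Liouville coordinates'' replaced by ``normalizing coordinates'' and ``constant on each Liouville torus'' replaced by ``resonant.''
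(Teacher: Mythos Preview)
Your approach is correct and lands on the same core reduction as the paper: pass to normal-form coordinates, then show that $\mathcal{L}_X\mathcal{G}=0$ forces $\mathcal{L}_{X^{ss}}\mathcal{G}=0$, and finally use the incommensurability of the $\lambda_i$ to conclude $\mathcal{L}_{Z_i}\mathcal{G}=0$ for each generator $Z_i$. The paper's execution differs from yours in that it does \emph{not} average: it argues directly by contradiction, picking a monomial term $\mathcal{H}$ of $\mathcal{G}$ of lowest degree (and, within that degree, most primitive with respect to the nilpotent operator $\mathcal{L}_{X^{nil}}$) having nonzero $\mathcal{L}_{X^{ss}}$-eigenvalue, and checking that its coefficient survives unchanged in $\mathcal{L}_X\mathcal{G}$. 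Your averaging trick actually buys you a small simplification over the paper's argument: once you filter purely by polynomial degree, the lowest-degree component $\Psi$ of $\hat{\mathcal{G}}$ satisfies $\mathcal{L}_{X^{(1)}}\Psi=0$, and since $\Psi$ lies entirely in the sum of nonzero eigenspaces of $\mathcal{L}_{X^{ss}}$ (it has zero torus-average), the restriction of $\mathcal{L}_{X^{(1)}}=\mathcal{L}_{X^{ss}}+\mathcal{L}_{X^{nil}}$ to each such eigenspace is invertible, forcing $\Psi=0$ immediately. So the secondary ``weight-defect'' or Jordan filtration you worry about in your third paragraph is unnecessary; the degree filtration alone suffices. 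One correction of language: the induction must start from the \emph{lowest} nonzero degree and proceed upward, since the higher-order terms $X^{(\ge 2)}$ raise degree; your ``top-degree'' and ``downward induction'' are pointed the wrong way.
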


\begin{proof} 
The proof is just a simple exercise in linear algebra and representation theory.
We can assume that $X$ is in normal form: $[X,X^{ss}] = 0$, where $X^{ss}$ is
the semisimple linear part of $X$, and is already diagonalized in a coordinate
system $(x_i)$. Then the monomial tensor fields in the coordinates $(x_i)$
are eigenvectors with respect to the Lie derivative operator $\cL_{X^{ss}}$, 
i.e. if $\mathcal{H}$ is a monomial tensor field then $\cL_{X^{ss}}\mathcal{H} = \lambda_\mathcal{H} \mathcal{H}$ for some $\lambda_\mathcal{H}$ 
which is a linear combination
of the eigenvalues of $X$ with integral coefficients. 

Assume that $\cL_{X^{ss}} \mathcal{G} \neq 0$ for some tensor field, we will show that
$\cL_{X} \mathcal{G} \neq 0$. Let $\mathcal{H}$ be the monomial term with a non-zero coefficient $c_\mathcal{H}$ in $\cG$
of lowest lexicographic order with respect to the Jordan decomposition 
$X^{(1)} = X^{ss} + X^{nil}$, such that $\lambda_\mathcal{H} \neq 0$, where $X^{(1)}$ denotes the linear part of $X$
in the coordinate system $(x_i)$ and $X^{nil}$ denotes the nilpotent part of $X^{(1)}$. 
It means that $\mathcal{H}$ has lowest degree in $\cG$, and if $\mathcal{H}'$ is another monomial tensor field such that $(L_n)^k\mathcal{H}' = \mathcal{H}$ for some $k > 0$,
where $L_n$ denotes the nilpotent linear operator 
$\mathcal{A} \mapsto \cL_{X^{nil}} \mathcal{A}$, then $\mathcal{H}'$ does not appear
in the expression of $\cG$, i.e. its coefficient in $\cG$ is zero. 
Due to the nilpotence of $\cL_X - \cL_{X^{ss}}$, the fact that 
 $\cL_X$ commutes with $\cL_{X^{ss}}$, 
and the choice of $\mathcal{H}$, we have that the coefficient of 
$\mathcal{H}$ in $\cL_X \cG$ is exactly equal to the coefficient of 
$\mathcal{H}$ in $\cL_{X^{ss}} \cG$, which is equal to $\lambda_\mathcal{H} \neq 0$.

Thus, in order for $\cL_X \cG$ to vanish, all monomial terms in $\cG$ must be resonant
with respect to $\cL_{X^{ss}}$, i.e., we must have $\cL_{X^{ss}} \mathcal{G} = 0$.

Write $X^{ss} = \sum_{i=1}^d \lambda_i Z_i$, where $d$ is the toric degree of $X$,
$Z_i = \sum a_{ij} x_j \partial x_j$ are diagonal linear vector fields with integral coefficients, and $\lambda_1,\hdots, \lambda_d$ are incommensurable numbers. The vector fields $\sqrt{-1} Z_1,\hdots, \sqrt{-1} Z_d$ are the generators of the associated
torus action of $X$ (see \cite{Zung-Convergence2002,Zung-Convergence2005}), and for each
monomial tensor field $\mathcal{H}$ we have 
$$\cL_{X^{ss}} \mathcal{H} = \sum_i \lambda_i \cL_{Z_i} 
\mathcal{H} = \sum_i c_i m_i \mathcal{H},$$ 
where $m_i \in \mathbb{Z}$ is the eigenvalue of $\mathcal{H}$
with respect to $\cL_{Z_i}$. Since the numbers $c_i$ are incommensurable, we have that
$\sum_i c_i m_i = 0$ if and only if $m_i =0$ for every $i$, i.e. $\cL_{Z_i} \mathcal{H} =0$
for every $i$. Thus, from $\cL_{X^{ss}} \mathcal{G} = 0$ we get $\cL_{Z_i} \mathcal{G} =0$
for every $i$, which means that $\cG$ is invariant with respect to the associated torus action.
\end{proof}

In particular, if $X = X_H$ is a Hamiltonian vector field, and $\cG$ is the symplectic form,
then the associated torus action of $X$ automatically preserves $\cG$, i.e. it is a Hamiltonian torus action. Because of this, Birkhoff normalization and Poincaré-Dulac normalization are essentially the same theory (see \cite{Zung-Torus2006}).

The toric degree $d$ discussed above is also called \textit{complex} toric degree,
because even when $X$ is a real analytic
vector field on $\mathbb{R}^m$, its associated torus
$\mathbb{T}^d$-action acts on the complexified space $\mathbb{C}^m$
and not on the real space $\mathbb{R}^m$ in general. Only a subtorus
of this complex torus action, say 
$\mathbb{T}^{d_r} \subset \mathbb{T}^d$ acts in the real space, and
the (largest possible) number $d_r$ is called the 
\textbf{\textit{real toric degree}} of $X$ at the origin $O$.

For example, if $X = X_H$ is a real integrable Hamiltonian vector field with a nonresonant singularity at $O$, then according to a classification
by Williamson (see., e.g.,  \cite{Eliassion-NF1990,
Miranda-Thesis2003,Zung-Integrable1996}) and the local normalization theorem, the function $H$
can be written in some local analytic system of canonical coordinates 
as a real analytic function 
$$H = h(\mu_1, \hdots, \mu_n)$$
of the components of $\mu_i$, which belong to 3 following types:
\begin{itemize}
\item \emph{Elliptic} $\mu_j = \dfrac{x_j^2 + y_j^2}{2};$
\item \emph{Hyperbolic} $\mu_i = x_iy_i;$
\item \emph{Focus-Focus} $\mu_{i} = x_iy_i + x_{i+1} y_{i+1}, 
\  \mu_{i+1} = x_iy_{i+1} - x_{i+1}y_i.$
\end{itemize}
The \textbf{\textit{Williamson type}} 
of $X_H$ at $O$ is the triple $(k_e,k_h,k_f)$, where
$k_e$ (resp., $k_h$, $k_f$) is the number of elliptic (resp., hyperbolic,
focus-focus) components of $X$ at $0$. The complex
toric degree of $X$ in this case is $n = k_e+k_h+2k_f $, but its real toric degree is only
$k_e + k_f$: there is a local real analytic effective Hamiltonian torus 
$\mathbb{T}^{k_e+k_f}$ which preserves the system near $O$, and that number
is maximal possible. If $\cG$ is a real analytic tensor field (or more generally, a subbundle of a natural bundle) which is preserved by $X$,
then its complexification  is invariant under the torus $\mathbb{T}^d$-action
in the complexified space, and hence $\cG$ is also invariant under the
associated action of $\mathbb{T}^{d_r}$ (where $d_r$ is the real toric degree)
in the real space (which is a subaction of the associated torus action). 

\subsection{Torus actions near singular orbits and fibers}

In \cite{Zung-Toric2003}, it was shown that, under a very mild
so-called \textit{finite-type condition}, near any 
$k$-dimensional compact orbit of a real analytic integrable system $(X_1,\hdots,X_p,F_1,\hdots,F_q)$ there is a 
$\mathbb{T}^k$-action which preserves the system and is transitive on the orbit. Together with the \textit{\textbf{transverse toric degree}} $d$, which is the toric degree of the reduced system 
with respect to this $\mathbb{T}^k$-action
we get an effective torus $\mathbb{T}^{k+d}$-action in a complexified neighborhood of the compact orbit, which plays
the role of the \textit{associated torus action} in this case,
and Theorem \ref{thm:LocalNF-Conservation1} naturally extends
to this associated torus action. As usual, the proof in the analytic case is simpler than in the smooth case, because we can make use of the natural filtration by the degree of the monomial terms. 

For smooth but non-analytic integrable system, it does not make sense to talk about complexification, the torus action acts
only in the real space (and so its dimension is smaller than the
complex toric degree in general), the problem of normalization is harder in general, and one needs some nondegeneracy conditions
(not just the finite-type condition which is too weak). Torus actions and partial action-angle variables for smooth
integrable systems near singularities have been obtained
by many authors, most notably Eliasson \cite{Eliassion-NF1990}, Dufour--Molino \cite{DufourMolino-AA1990} and Banyaga--Molino
\cite{BaMo-Contact1992}
(polar action-angle coordinates for elliptic singularities, on symplectic and contact manifolds), 
Miranda--Zung \cite{MirandaZung-NF2004} (torus action and linearization near a nondegenerate compact orbit), and in our papers \cite{Zung-Integrable1996,Zung-Torus2006}. Using topological arguments, it was shown in \cite{Zung-Integrable1996} (see also
\cite{Zung-Torus2006}) that, under some nondegeneracy conditions (which may be weakened) near a singular fiber of rank $k$ of
a proper integrable Hamiltonian system on a symplectic manifolds there is a system-preserving Hamiltonian $\mathbb{T}^k$ action, which give rise to partial action-angle variables $(\mu_i,\theta_i)$: the symplectic
form $\omega$ has the form 
$$ \omega = \sum_{i=1}^k d\mu_i \wedge d\theta_i + \beta,$$
where $\beta$ does not depend on the variables $(\mu_i,\theta_i).$

Naturally, results of Section \ref{section:Conservation} about
the fundamental conservation property extend to these smooth torus
actions near singular orbits and fibers of proper integrable systems, because these torus actions are parts of Liouville torus actions outside of the singularities.

\section{Some final remarks}
\label{Section:StochasticQuantum}

In this paper, we studied only finite-dimensional classical dynamical systems
generated by vector fields, but the same idea about associated torus actions
playing the role of double commutants should also work in many other contexts. 
Let us mention here a few.

$\bullet$ \textit{Systems on Nambu manifolds}. See \cite{DufourZung-PoissonBook}
for an introduction to Nambu structures, and \cite{ZungMinh--Foliations2013}
for a more precise relationship between Nambu structures and singular foliations.

Let $(X_1,\hdots,X_p,F_1,\hdots,F_q)$ be an integrable system on 
a manifold $M$ and $\cD$ be a smooth distribution on $M$ such that 
the vector fields $X_i$ are tangent to $\cD$ and preserve $\cD$. 
Then it follows easily from the fundamental conservation property that
any Liouville torus $N$ of the system admits a tubular neighborhood 
$\cU(N)$ which can be decomposed as
\begin{equation}
\cU(N) \cong \bbT^p \times B^q, 
\end{equation}
with $\cD = T\bbT^p \oplus \cD_1$,
where $\cD_1$ is a smooth distribution on $B^q.$

In the case when $\cD$ is the associated distribution of an
(almost) Nambu structure $\Lambda$ and $\cL_{X_i} \Lambda = 0,$ 
then from the above splitting result for $\cD$
we obtain the following splitting formula for $\Lambda:$
\begin{equation}
\Lambda = \dfrac{\partial}{\partial \theta_1} 
\wedge \hdots \wedge \dfrac{\partial}{\partial \theta_p} \wedge \Pi,
\end{equation}
where $\Pi$ is an (almost) Nambu structure on $B^q$, and $\theta_1,\hdots,\theta_p$ are action variables. \\

$\bullet$ Torus actions in normalizations of diffeomorphims
and in the singular perturbation method (renormalization).
Torus actions appear naturally not only in dynamical systems 
generated by vector fields, but also in diffeomorphisms, renormalization method, and so on, see, e.g., \cite{Raissy-Torus2010,Chiba-Torus2009}.
It is natural to expect that these natural torus actions also
possess the fundamental conservation property. 

$\bullet$ \textit{Infinite-dimensional integrable systems}.
If we have an integrable Hamiltonian system on an infinite dimensional symplectic manifold modeled on a separable Hilbert space, for example the periodic KdV or the integrable non-linear Schr\"odinger equation, then one observes the following phenomena (see, e.g., \cite{KaPo-KAM2003,KLPZ-NLS2009,KuPe-Vey2010}): 

i) The regular and singular elliptic level sets are compact tori homeomorphic 
(with the subspace topology) to $\mathbb{T}^\mathbb{N}$  with product topology or to a finite-dimensional torus. The tori
$\mathcal{T}_c = \{z = (z_n) \in \ell_2 \mid  z_i \in \mathbb{C}, |z_i|^2 = c_i\}$, where $c = (c_n)$ is a sequence of nonnegative real numbers whose sum converges, which appear in infinite-dimensional harmonic oscillators, 
are typical examples of such compact infinite-dimensional tori. (It is a simple exercise in elementary topology to show that the subspace topology of each such torus is the same as the product topology). 

ii) There is an analytic infinite-dimensional Hamiltonian 
torus action near each Liouville or elliptic torus, which preserves the system
and which gives rise to an infinite-dimensional normal form à la Birkhoff,
with an infinite number of action and angle variables.

$\bullet$ \textit{Quantum and semi-classical integrable systems}. 
In a quantum mechanical system, angle variables are phases, 
action variables correspond to quantum numbers, and associated torus
actions are periodic unitary transformations which preserve the system.
(See, e.g., \cite{LLH-QuantumOscillatorAA1997} for the example of
quantum harmonic oscillator). The fundamental conservation property here 
means that any quantum operator which commutes with the system will also
commute with the phase shift operators.  In the semi-classical case,
one has semi-classical Birkhoff normal forms 
(see, e.g., \cite{San-Semiclassical2006}), and there should exist associated
torus actions, which preserve everything which is preserved by the 
semi-classical system, and whose linearization corresponds to the
semi-classical normalization of the system.

\section*{Acknowledgement}

This paper is based on an unpublished preprint 
``Action-Angle variables on Dirac manifolds'' (arXiv:1204.3865) 
written by the author in 2012, which contains the
central idea of the paper, namely action-angle normal forms 
can be easily obtained from the fundamental conservation property of the 
associated torus actions: \textit{everything which is preserved by 
the system is also preserved by the associated torus actions}.

The results of this paper had been exposed in several talks at seminars and 
conferences, including the conference ``Geometry and Mechanics''
at Institut Henri Poincaré, Paris, 22-24/Nov/2014 in honour of C.-M. Marle,
and the ``Workshop on Poisson Geometry and Mathematical Physics'' at
Chern Institute, Tianjin, 04-08/Jan/2016. I would like to thank
many colleagues for their interest in this work and for
various discussions, and especially to Tudor Ratiu for encouraging me to 
write up this paper. 

I finally found time to work out various details and to 
write up this paper in the first half of 2017, 
thanks to a stay at the School of Mathematical Sciences, Shanghai 
Jiao Tong University as a visiting professor. I would like to thank Shanghai 
Jiao Tong University, 
the colleagues at the School of Mathematics of this university, and 
especially Tudor Ratiu, Jianshu Li, and Jie Hu for the invitation, 
hospitality and excellent working conditions. 

I would like also to thank the referees for many remarks
which helped me to improve the presentation of the paper. 

\section*{Compliance with ethical standards}

This research did not involve any animals or human participants.

As far as I know, there is no conflict of interest; the main results presented in this
paper are original and have not been submitted for publication in any other journal.

\vspace{0.5cm}

\end{document}